\documentclass[reqno]{amsart}
\usepackage{cite}

\usepackage{newtxtext}
\usepackage{txfonts}
\DeclareMathAlphabet{\mathcal}{OMS}{cmsy}{m}{n}
\DeclareSymbolFont{largesymbols}{OMX}{cmex}{m}{n}

\usepackage[unicode, colorlinks]{hyperref}
\hypersetup{
	linkcolor = blue,
	citecolor = red,
	urlcolor = teal,
	bookmarksnumbered = true,
}

\usepackage{amsmath, amsthm, amssymb, bm, graphicx, mathrsfs}
\usepackage{cases}

\newtheorem{theorem}{Theorem}[section]
\newtheorem{definition}[theorem]{Definition} 
\newtheorem{proposition}[theorem]{Proposition}
\newtheorem{lemma}[theorem]{Lemma}

\newcommand{\e}{{\rm e}}

\begin{document}

\title{Low-regularity Schr\"odinger map flow on high-dimensional periodic domains}

\author{Li Tu}
\address{School of Mathematical Sciences, Fudan University, Shanghai 200433, China.}
\email{ltu23@m.fudan.edu.cn}

\author{Yi Zhou}
\address{School of Mathematical Sciences, Fudan University, Shanghai 200433, China.}
\email{yizhou@fudan.edu.cn}

\subjclass[2010]{35Q55}
\keywords{Low-regularity local well-posedness; Schr\"odinger map flow; Quasilinear; Periodic domains; Div-curl lemma}

\begin{abstract}
    We study the initial-value problem for the Schr\"odinger map flow from flat torus $\mathbb{T}^d$ into compact K\"ahler manifold $\mathcal{N}$.
    When $d \geq 3$ and $\mathcal{N} = \mathbb{S}^2$, we establish local well-posedness in $H^{\sigma}_x$ with $\sigma > d/2 + 1/2$.
    In this case, the evolution equation for the gradient of the solution reduces to a certain semilinear nonlinear Schr\"odinger equation (also known as modified Schr\"odinger map flow) when formulated in orthonormal frames.
    For general compact K\"ahler targets, we only obtain local well-posedness in $H^{\sigma}_x$ with $ \sigma > d/2 + 5/6$ due to the quasilinear nature of the flow, but in all dimensions $d \geq 2$.
    To the best of our knowledge, this is the first low-regularity local well-posedness result for Schr\"odinger map flow in the periodic setting, which yields a gain of $1/2$ derivatives for $\mathbb{S}^2$ targets and $1/6$ derivatives for general K\"ahler targets compared to the classical results \cite{DW,M}.
    The key ingredients of our method are an $L_{t, x}^2$ bilinear estimate for the first case and an \emph{a priori} $L_t^6L_x^{\infty}$ estimate for the second case, which are both achieved by combining the mass/energy and momentum balance laws of the equation with a new type of div-curl lemma introduced by the second author.
\end{abstract}

\maketitle

\setcounter{tocdepth}{1}
\tableofcontents
\thispagestyle{empty}
\setcounter{page}{1}

\section{Introduction}\label{intro}

\subsection{Background and main results}

Suppose $d \geq 2$. Let $\mathcal{M}^d$ denotes Euclidean space $\mathbb{R}^d$ or flat torus $\mathbb{T}^d$ and $\mathcal{N}$ be a complete K\"ahler manifold with a compatible almost complex structure $J$.
The Schr\"odinger map flow (SMF) is a map $u\colon \mathbb{R}_+ \times \mathcal{M}^d \rightarrow \mathcal{N}$ which satisfies
\begin{equation}\label{SMF}
    \partial_tu = J(u)D_k\partial_ku,
\end{equation}
where $D$ denotes the covariant derivatives along the curve $u$ and $\partial$ denotes the usual derivatives.
Throughout this paper the repeated indices imply summation. The SMF and its various forms arise from solid-state physics and ferromagnetic materials \cite{LL}. 
An important example is the case $\mathcal{N} = \mathbb{S}^2$, where \eqref{SMF} is equivalent to the Landau-Lifshitz equation
\begin{equation}\label{LL}
    \partial_tu = u \times \Delta_x u
\end{equation}
describing Heisenberg spin chain system. Here, $\times$ denotes the cross product in $\mathbb{R}^3$.

The solution to \eqref{SMF} obeys energy conservation 
\begin{equation*}
    \mathcal{E}(u(t)) = \frac{1}{2}\int_{\mathcal{M}^d}|\nabla_xu(t)|^2 \,{\rm d}x = \mathcal{E}(u(0)), \qquad \nabla_x = (\partial_1, \cdots, \partial_d).
\end{equation*}
When $\mathcal{M}^d = \mathbb{R}^d$, it is easy to see \eqref{SMF} is invariant under scaling transform 
\begin{equation}\label{scaling}
    u(t, x) \mapsto u(\lambda^2t, \lambda x),
\end{equation}
which implies the critical index $\sigma_c = d/2$ since the norm $\Vert \cdot\Vert_{\dot H^{\sigma_c}(\mathbb{R}^d)}$ is invariant under \eqref{scaling}.

The primary objective of this paper is the \emph{low-regularity problem} for SMF \eqref{SMF}, which amounts to lowering the regularity index for initial data as much as possible, while ensuring the original initial-value problem remains well-posed. 
Restricting our attention to local-in-time solutions, we interpret local well-posedness in the following sense:
\begin{itemize}
    \item \emph{Existence}. For any initial data in $H_x^{\sigma}$, \eqref{SMF} admits a solution belonging to the class $C([0, T]; H^\sigma)$ with $T$ depending only on the $H_x^{\sigma}$ size of initial data;
    \item \emph{Uniqueness}. The solution is uniquely determined in the sense that it is the unique limit of smooth solutions;
    \item \emph{Weak Lipschitz dependence}. For two solutions $u_0(t)$ and $u_1(t)$ with $u_0(0) = \phi_0$ and $u_1(t) = \phi_1$,
    \begin{equation}\label{WLD}
        \Vert u_0(t) - u_1(t)\Vert_{H_x^{\sigma - 1}} \lesssim \Vert \phi_0 - \phi_1\Vert_{H_x^{\sigma - 1}}, \qquad t \in [0, T].
    \end{equation} 
\end{itemize}

The difficulties of establishing well-posedness for SMF \eqref{SMF} stem from its quasilinear nature, namely, the presence of almost complex structure $J(u)$ acting on the highest-order term $D_k\partial_ku$, as well as the periodicity of the domain leading to lack of dispersion.
In contrast to general SMF, for the solution to \eqref{LL}, the evolution equation of its gradient favorably reduces to a certain semilinear NLS (nonlinear Schr\"odinger equation) when formulated in the notations of orthonormal frames.
This semilinear NLS is also known as modified Schr\"odinger map flow \cite{CSU,NSU1,KN,BIKT2}.

The first result of our paper concerns the local well-posedness for Landau-Lifshitz equation \eqref{LL} in dimensions $d \geq 3$.
We use energy method and mainly adpot the notations in \cite{BIKT2}. Specifically, we aim to investigate NLS with magnetic nonlinearity
\begin{equation}\label{Mag}
    (\sqrt{-1}\partial_t + \Delta_x)\psi_k = \sqrt{-1}\bm{A} \cdot \nabla_x\psi_k, \qquad k = 1, \cdots, d,
\end{equation}
where $\bm{\psi} = (\psi_1, \cdots, \psi_d)\colon \mathbb{R}_+ \times \mathbb{T}^d \rightarrow \mathbb{C}^d$, $\bm{A} = \nabla_x^{-1}(\psi_k\overline{\bm{\psi}})$ satisfies the Coulomb gauge condition $\nabla_x \cdot \bm{A} = 0$ and $\nabla_x^{-1}$ is a Fourier multiplier defined by $\xi \mapsto |\xi|^{-1}$.

To handle different frequency interactions in the nonlinearity, we first provide an $L_{t, x}^2$ bilinear estimate to deal with the high-low (or low-high) interaction from Bony paraproduct decomposition.
This $L_{t, x}^2$ bilinear estimate will be proved in a physical space approach via a novel div-curl lemma first introduced by the second author, combinig with the energy and momentum balance laws of the equation.
For the high-high interaction case, we follow the work by Colliander-Keel-Staffilani-Takaoka-Tao \cite{C} to apply the interaction Morawetz estimates,  which yield $L_{t, x}^4$ control for (the Littlewood-Paley blocks of) the solution.
However, the classical interaction Morawetz estimates rely ultimately on the fact that $-\Delta\Delta|x|$ is a well-defined and nonnegative termpered distribution in $\mathbb{R}^d$ when $d \geq 3$, and cannot be applied to the periodic setting directly.
To address this we employ an appropriate spatial cutoff function.
This approach enables us to establish a ``localized'' version of interaction Morawetz estimate, which is sufficient for our purpose.
Finally, we note that although there exists one derivative loss from the term $\nabla_x\psi_k$, the pure imaginary part $\sqrt{-1}$ and Coulomb gauge condition enable us to remove the derivatives at high frequency using integration by parts and commutator estimates, and then close the energy estimates.

For $\sigma \geq 0$ we define extrinsic Sobolev spaces 
\begin{equation*}
    H^{\sigma}(\mathbb{T}^d; \mathbb{S}^2) := \{u \in H^{\sigma}(\mathbb{T}^d; \mathbb{R}^3)\colon |u| \equiv 1\ {\rm a.e.}\},
\end{equation*}
where $H^{\sigma}(\mathbb{T}^d; \mathbb{R}^3)$ denotes usual Sobolev spaces.
Our first result reads 

\begin{theorem}\label{LWP2}
    Suppose $d \geq 3$ and $\sigma > d/2 + 1/2$. Then \eqref{LL} is locally well-posed for initial data $\phi \in H^\sigma(\mathbb{T}^d; \mathbb{S}^2)$.
\end{theorem}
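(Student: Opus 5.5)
The plan is to prove Theorem \ref{LWP2} by the energy method applied to the differentiated system \eqref{Mag} in the Coulomb gauge, following the framework of \cite{BIKT2}.

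\textbf{Step 1: reduction to the frame system.} For smooth data we take the classical smooth solutions \cite{DW,M}. Along $u$ we choose an orthonormal frame $(v,w)$ of $u^*T\mathbb{S}^2$ fixed by the Coulomb condition $\nabla_x\cdot\bm{A}=0$. We set $\psi_k=\partial_ku\cdot v+\sqrt{-1}\,\partial_ku\cdot w$. Then $\bm\psi$ solves \eqref{Mag} with $\bm A=\nabla_x^{-1}(\psi_k\overline{\bm\psi})$, where lower-order cubic terms $\psi|\psi|^2$ are absorbed or treated similarly. On $\mathbb{T}^d$ the zero mode and the existence of a global Coulomb frame need care. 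For small time we will work locally or use the fact that $d\ge3$ and $\sigma>d/2$ makes $\nabla u$ continuous. The $H^{\sigma}$ norm of $u$ is then comparable to the $H^{\sigma-1}$ norm of $\bm\psi$. The goal is a priori bounds for $\|\bm\psi\|_{H^{s}}$ with $s=\sigma-1>d/2-1/2$ on a time interval depending only on the data. Existence then follows by compactness, and uniqueness and \eqref{WLD} follow from the same estimates applied to the difference equation at regularity $s-1$.

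\textbf{Step 2: energy estimate.} We apply $P_N$ and pair with $N^{2s}\overline{P_N\psi_k}$. The derivative loss in $\bm A\cdot\nabla P_N\psi$ cancels because $\mathrm{Re}\,\sqrt{-1}\cdot\sqrt{-1}\int \bm A\cdot\nabla|P_N\psi|^2=0$ by $\nabla\cdot\bm A=0$. What remains is a commutator $[P_N,\bm A\cdot\nabla]\psi$, which we split by the Bony paraproduct decomposition.
\begin{itemize}
\item[(a)] \emph{Low frequency of $\bm A$, high frequency of $\psi$.} The commutator moves the derivative onto $\bm A$.
\item[(b)] \emph{High frequency of $\bm A$, low frequency of $\psi$.}
\item[(c)] \emph{High--high interactions inside $\bm A$.}
\end{itemize}
Sobolev embedding alone closes everything at $s>d/2$, that is, in the classical setting. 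To gain $1/2$ derivative, terms (a) and (b) will be controlled with the $L^2_{t,x}$ bilinear estimate $\|P_N\psi\,P_M\psi\|_{L^2_{t,x}}\lesssim (M^{d-1}/N)^{1/2}\,\|P_N\psi\|_{L^\infty L^2}\|P_M\psi\|_{L^\infty L^2}$ for $M\ll N$, up to error terms. Term (c) will be controlled with localized interaction Morawetz bounds of the form $\|P_N\psi\|_{L^4_{t,x}}^4\lesssim N^{d-3}(\ldots)$ on unit time intervals.

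\textbf{Step 3: the bilinear estimate (main obstacle).} This step is the crux. On the torus there is no global dispersion, so the estimate must come from conservation laws. Write $\rho=|P_N\psi|^2$ for the mass density and $\bm p=\mathrm{Im}(\overline{P_N\psi}\nabla P_N\psi)$ for the momentum density; these satisfy $\partial_t\rho+2\nabla\cdot\bm p=F$. Similarly, $P_M\psi$ gives a second balance law. We form the space-time vector fields $(\rho_N,\,-2\bm p_N)$ and $(\ldots)$ built from the momentum of $P_M\psi$. We then apply the div-curl lemma of the second author in $\mathbb{R}_t\times\mathbb{T}^d$ to bound $\int\!\!\int \rho_N\rho_M$ by products of $L^\infty_tL^1_x$ norms and the divergence/curl errors, and use the frequency separation $|\bm p_N|\sim N\rho_N$ to gain the factor $N^{-1}$. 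The forcing terms $F$, which come from the nonlinearity, must be estimated perturbatively by the bootstrap norms. Making these errors small enough is the step I expect to be hardest. I would first try it on short time intervals with $\|\psi\|_{H^s}$ small relative to $T^{-1}$.

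\textbf{Step 4: closing.} The Morawetz part is obtained by inserting a spatial cutoff $\chi(x-y)\,|x-y|$ with $\chi$ supported in a fundamental domain. The error terms from derivatives of $\chi$ are bounded by energy and mass times $N$, which is acceptable on unit time. We then set up a bootstrap for the norm $\sup_t\|\psi\|_{H^s}$ together with the bilinear and $L^4$ norms summed over dyadic frequencies. With these inputs all terms in Step 2 close for $s>d/2-1/2$, which gives $\|\psi\|_{L^\infty_TH^s}\le 2\|\psi(0)\|_{H^s}$ for small $T$. For the difference of two solutions we repeat the argument at level $s-1$. This yields \eqref{WLD} and uniqueness as limits of smooth solutions, and frequency envelopes give continuity in $H^\sigma$.
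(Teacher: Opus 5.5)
Your architecture matches the paper's: Coulomb-gauged mSMF, removal of the magnetic derivative loss via $\nabla_x\cdot\bm{A}=0$, a div-curl bilinear estimate for high--low terms, a localized interaction Morawetz estimate for high--high terms, and a frequency-envelope bootstrap. The gap is in Step 3, which aims at an estimate that is false on $\mathbb{T}^d$.

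\textbf{The claimed bilinear estimate fails on the torus.} The bound $\Vert P_N\psi\,P_M\psi\Vert_{L^2_{t,x}}\lesssim (M^{d-1}/N)^{1/2}\Vert P_N\psi\Vert_{L^\infty L^2}\Vert P_M\psi\Vert_{L^\infty L^2}$ already fails for the free flow, and hence for small solutions. The plane waves $\e^{\sqrt{-1}(Nx_1-N^2t)}$ and $\e^{\sqrt{-1}(Mx_2-M^2t)}$ have a product of modulus one, so the left side is $\sim T^{1/2}$ while the right side tends to $0$ as $N\to\infty$. On a compact domain high-frequency waves never leave.

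In the div-curl argument this appears as the product-of-means term in the last line of \eqref{div1}. With $f^{11}$ the mass of the low piece and $f^{22}$ the momentum flux of the high piece, that term has size $TN^2\Vert P_M\psi\Vert_{L^2}^2\Vert P_N\psi\Vert_{L^2}^2$, as large as the main term. After dividing by $N^2$ you get a factor $N^{-1}+T$, not $N^{-1}$; this is the term $\mathcal{S}$ in Section \ref{L2L2}. So the ``frequency separation gains $N^{-1}$'' mechanism is unavailable.

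\textbf{What is true, and suffices.} The half derivative must instead be gained on the \emph{low} frequency, as in \eqref{16}. This requires three changes to your Step 3.
\begin{itemize}
\item Integrate out $\widehat{x}_1$, because the div-curl lemma (Lemma \ref{divcur}) is one-dimensional.
\item Replace $P_N$ by the directional projection $P_{N,e_1}$, so that $\partial_1$ acts at size $N$ on the high piece. For an undirected $P_N$ the frequency may be orthogonal to $e_1$, and no pointwise relation $|\bm p_N|\sim N\rho_N$ holds.
\item Pair the mass law of the low piece with the momentum law of the high piece, then add the swapped pairing so that the $\partial_1^2|\cdot|^2$ terms cancel. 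Your pairing, the mass current of $P_N\psi$ against the momentum law of $P_M\psi$, produces a flux of size $M^2$, not $N^2$.
\end{itemize}
The outcome, on a short interval and with no gain in $N$, is
$\sup_a\big\Vert\,\Vert P_M\psi^a\Vert_{L^2_{\widehat{x}_1}}\Vert P_{N,e_1}\psi\Vert_{L^2_{\widehat{x}_1}}\big\Vert_{L^2_{t,x_1}}\lesssim\Vert P_M\psi\Vert_{L^\infty L^2}\Vert P_N\psi\Vert_{L^\infty L^2}$.
Bernstein in the $d-1$ variables $\widehat{x}_1$ then costs $M^{(d-1)/2}$ instead of the Sobolev factor $M^{d/2}$. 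That is exactly the numerology behind your threshold $s>(d-1)/2$, so the scheme is repairable, but Step 3 as written cannot be carried out.

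\textbf{Uniqueness and \eqref{WLD}.} Differencing the two $\bm\psi$-systems runs into the gauge. The two solutions carry different Coulomb frames, so $\bm\psi^{(1)}-\bm\psi^{(0)}$ does not control $u_1-u_0$ without a frame comparison that your sketch does not supply. The paper avoids this with the homotopy $v(\zeta)$, started from the normalized interpolation $\Phi_\zeta$ of the data on $\mathbb{S}^2$. The frame coefficient $\psi_\zeta$ of $\partial_\zeta v$ satisfies \eqref{psizeta}, the same linear magnetic Schr\"odinger equation as $\psi_k$. The bootstrap therefore closes for it at the same level $H^{\sigma-1}$, and Proposition \ref{AC} converts the bound back to $\Vert\phi_1-\phi_0\Vert_{H^{\sigma-1}}$.

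A minor point: $\sigma>d/2$ makes $u$ continuous, not $\nabla u$. The gradient lies only in $H^{\sigma-1}$ with $\sigma-1>d/2-1/2$.
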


The improvments of Theorem \ref{LWP2} are substantial, yielding a gain of $1/2$ derivatives compared to the classical local well-posedness results in $H_x^{(d/2 + 1)+}$\footnote{
    In this article, for $\sigma \in \mathbb{R}$ we use $H^{\sigma+}$ to denote $H^{\sigma + \varepsilon}$ for arbitrarily small $\varepsilon > 0$.
} \cite{DW,M}.
Moreover, our analysis also implies that \eqref{Mag} is locally well-posed in $H_x^{(d/2 - 1/2)+}$.
For recent works concerning well-posedness for NLS on periodic domains we refer to \cite{LZ1,LZ2,HK,HK2}.

We next consider the local well-posedness for \eqref{SMF} with general compact K\"ahler targets, where we retain the quasilinear feature of the original equation and apply the standard energy method.
Roughly speaking, one needs to control the $L_x^{\infty}$ norm for the gradient of the solution during energy estimates, which, by Sobolev embeddings, implies local well-posedness in $H_x^{(d/2 + 1)+}$.
To overcome this regularity barrier we adopt the framework of \cite{WZ}, based on the balance laws of SMF and the div-curl lemma to obtain some nonlinear estimates that close the energy estimates (here, we use Littlewood-Paley decomposition additionally).
Following this approach, in this paper we prove an enhanced \emph{a priori} $L_t^6L_x^{\infty}$ estimate for the gradient of the solution, which enables us to obtain a low-regularity solution by energy method and commutator estimates.
Consequently, we show that \eqref{SMF} is locally well-posed in $H_x^{(d/2 + 5/6)+}$, gaining $1/6$ derivatives in all dimensions $d \geq 2$. 

Before stating our second result we introduce several notations.
Suppose $\mathcal{N}$ is a compact K\"ahler manifold isometrically embedded into $\mathbb{R}^N$, where $N$ is an integer sufficiently large.
Under this embedding, we may consider points in $\mathcal{N}$ as vectors in $\mathbb{R}^N$ and define extrinsic Sobolev spaces
\begin{equation*}
    H^{\sigma}(\mathbb{T}^d; \mathcal{N}) := \{u \in H^{\sigma}(\mathbb{T}^d; \mathbb{R}^N)\colon u \in \mathcal{N}\ {\rm a.e.}\},
\end{equation*}
where $\sigma \geq 0$ and $H^{\sigma}(\mathbb{T}^d; \mathbb{R}^N)$ denotes usual Sobolev spaces.
Note that $H^{\sigma}(\mathbb{T}^d; \mathcal{N}) \subseteq L^2(\mathbb{T}^d; \mathcal{N})$ since $\mathbb{T}^d$ is compact.
Moreover, we assume
\begin{itemize}
    \item $J$ is an $N \times N$ anti-symmetric matrix satisfying $J \in C^{\infty}, J' \in {\rm Lip}$ and $J'(0) = 0$;
    \item The unit normals, denoted by $n_i = n_i(u)$, of $\mathcal{N}$ exist everywhere. Moreover, $n_i \in C^{\infty}, n_i' \in {\rm Lip}$ and $n_i'(0) = 0$.
\end{itemize}
The covariant derivatives can be expressed as  
\begin{equation}\label{DV}
        DV = \partial V - \langle \partial V, n_i(u)\rangle n_i(u), \qquad V \in \mathbb{R}^N.
\end{equation}
where $\langle \cdot, \cdot\rangle$ stands for standard Euclidean inner product.

We shall prove 

\begin{theorem}\label{LWP1}
    Let $\sigma > d/2 + 5/6$. Then \eqref{SMF} is locally well-posed for initial data $\phi \in H^\sigma(\mathbb{T}^d; \mathcal{N})$.
    Moreover, the solution $u$ satisfies 
    \begin{equation}\label{Lwuqiong}
        \nabla_xu \in L^6(0, T; L^{\infty}(\mathbb{T}^d; \mathcal{N})).
    \end{equation}
\end{theorem}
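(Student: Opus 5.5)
The plan is an energy method in $H^\sigma_x$ for the extrinsic formulation, closed by an \emph{a priori} bound $\|\nabla_x u\|_{L^6_tL^\infty_x}$ in place of $\|\nabla_x u\|_{L^\infty_{t,x}}$. In coordinates, \eqref{SMF} reads $\partial_t u = J(u)\Delta_x u + \mathcal{B}(u)(\nabla_x u,\nabla_x u)$, where the second-fundamental-form terms come from \eqref{DV}. Applying $\Lambda^\sigma=\langle\nabla_x\rangle^\sigma$ and pairing with $\Lambda^\sigma u$, the top-order term $\langle J(u)\Delta\Lambda^\sigma u,\Lambda^\sigma u\rangle$ vanishes after integration by parts up to $\langle \partial J(u)\,\nabla\Lambda^\sigma u,\Lambda^\sigma u\rangle$, because $J$ is antisymmetric. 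The Kähler condition ($J$ parallel, $J$ tangential on the tangent bundle) removes the remaining derivative loss: one uses a modified energy with $\Lambda^\sigma$ replaced by a covariant analogue, or projects onto tangent directions. Commutator (Kato--Ponce) estimates for $[\Lambda^\sigma,J(u)]\Delta u$ and Moser estimates for the quadratic terms should then give
\begin{equation*}
\frac{d}{dt}\|u\|_{H^\sigma}^2 \lesssim C(\|u\|_{L^\infty})\big(1+\|\nabla_x u\|_{L^\infty_x}\big)^2\|u\|_{H^\sigma}^2 .
\end{equation*}
Gronwall then closes the estimate provided $\int_0^T\|\nabla u\|_{L^\infty}^2\,dt$ is controlled, and the $L^6_t$ bound supplies this with room: $\int_0^T\|\nabla u\|_\infty^2 \le T^{2/3}\|\nabla u\|_{L^6_tL^\infty_x}^2$.

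The core is the $L^6_tL^\infty_x$ estimate \eqref{Lwuqiong}. I would Littlewood--Paley decompose $\nabla u=\sum_N P_N\nabla u$. For each dyadic block I would combine the mass/energy and momentum balance laws of the (frequency-localized) flow with the div-curl lemma, as in \cite{WZ}. This should yield a local smoothing/bilinear-type bound that gains derivatives in $L^2_t$ on unit time scales, for instance $\|P_N\nabla u\|_{L^2_tL^\infty_x}\lesssim N^{d/2-1/2+}\cdot(\text{data})$ on intervals of length $\sim N^{-1}$ up to error terms. Interpolating against the trivial Bernstein bound $\|P_N\nabla u\|_{L^\infty_tL^\infty_x}\lesssim N^{d/2+1-\sigma}\|u\|_{H^\sigma}$ would give $L^6_t$ with effective loss $d/2+5/6$ in place of $d/2+1$. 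Summing over $N$ requires $\sigma>d/2+5/6$, and this is exactly where the threshold arises.

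For the frequency-localized balance laws, I would write the equation for $P_Nu$ with forcing $[P_N,J(u)]\Delta u+P_N\mathcal{B}$. The forcing terms are bounded by the energy norm times $\|\nabla u\|_{L^\infty}$, which produces a bootstrap in the pair $(\|u\|_{L^\infty_tH^\sigma},\|\nabla u\|_{L^6_tL^\infty_x})$. This bootstrap closes for $T$ small depending on $\|\phi\|_{H^\sigma}$.

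After that, the steps are routine. Existence follows by regularizing the data (the classical smooth theory \cite{DW,M}), obtaining uniform bounds, and passing to the limit by compactness. Uniqueness and weak Lipschitz dependence \eqref{WLD} come from an $H^{\sigma-1}$ energy estimate for the difference $u_0-u_1$, again using antisymmetry of $J$ and the $L^6_tL^\infty_x$ bound of both solutions through Gronwall. Continuity in time in $H^\sigma$ follows via a frequency-envelope (Bona--Smith) argument.

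The main obstacle is the $L^6_tL^\infty_x$ estimate. The div-curl lemma must handle the quasilinear forcing in the frequency-localized balance laws without losing derivatives, and the error must stay integrable over many short time intervals on $\mathbb{T}^d$, where there is no global dispersion.
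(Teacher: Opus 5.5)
\textbf{Verdict: there is a genuine gap.} The step that carries the theorem, the $L^6_tL^\infty_x$ bound, is not supplied, and the mechanism you sketch for it does not work. The energy-method part is broadly consistent with the paper: commutator estimates using the antisymmetry of $J$ and the K\"ahler structure, Gronwall via $\int_0^T\Vert\nabla_xu\Vert_{L^\infty_x}^2\,{\rm d}t\le T^{2/3}\Vert\nabla_xu\Vert_{L^6_tL^\infty_x}^2$, and compactness from regularized data.

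\textbf{Why your $L^6_tL^\infty_x$ mechanism fails.} On an interval $I$ with $|I|\sim N^{-1}$, the bound $\Vert P_N\nabla_xu\Vert_{L^2_IL^\infty_x}\lesssim N^{d/2-1/2}\Vert P_N\nabla_xu\Vert_{L^\infty_tL^2_x}$ is just H\"older in time plus Bernstein, so it contains no gain. Interpolating with $L^\infty_t$ on each such interval and summing over the $\sim N$ intervals in unit time gives back exactly $N^{d/2+1-\sigma}$, the classical threshold. Your numerology produces $d/2+5/6$ only if the half-derivative gain in $L^2_tL^\infty_x$ holds on a \emph{unit} time interval. That is a Strichartz estimate of Burq--G\'erard--Tzvetkov type. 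For the quasilinear flow it would need a semiclassical parametrix. The balance laws plus the div-curl lemma do not give it: the lemma controls a quadratic density in one spatial variable, not a dispersive norm pointwise in $x$.

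\textbf{The missing idea.} The paper uses a one-dimensional argument. Integrate the frequency-localized energy and momentum balance laws for $\nabla_xP_\lambda u$ over $\widehat{x}_1$, as in \eqref{dc11}--\eqref{dc22}, and apply \eqref{div2} on all of $[0,t]$, with no short-time decomposition. Set $f(x_1)=\int_{\mathbb{T}^{d-1}}\frac12|\nabla_xP_\lambda u|^2\,{\rm d}\widehat{x}_1$. After an integration by parts, and Cauchy--Schwarz on the momentum density $\int\langle\partial_1\partial_\ell P_\lambda u,J(u)\partial_\ell P_\lambda u\rangle\,{\rm d}\widehat{x}_1$, the left side dominates $\Vert\partial_1f\Vert_{L^2_{t,x_1}}^2$.

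The right side is $O(\lambda^{-4\sigma+6}a_\lambda^4)$. This requires checking that the forcing terms $\mathcal{G}_\lambda,\mathcal{H}_\lambda$ cost at most two, respectively three, derivatives at scale $\lambda$. That check uses $\partial_\ell J(u)$ normal, $D_kJ=D_tJ=0$, and tangent/normal orthogonality, not plain commutator bounds.

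The one-dimensional Gagliardo--Nirenberg inequality $\Vert f\Vert_{L^\infty}^3\lesssim\Vert\partial_1f\Vert_{L^2}^2\Vert f\Vert_{L^1}+\Vert f\Vert_{L^1}^3$ then gives
$\Vert\nabla_xP_\lambda u\Vert_{L^6_tL^\infty_{x_1}L^2_{\widehat{x}_1}}^6\lesssim\lambda^{-2\sigma+2}\lambda^{-4\sigma+6}a_\lambda^6$, that is, $\Vert\nabla_xP_\lambda u\Vert_{L^6_tL^\infty_{x_1}L^2_{\widehat{x}_1}}\lesssim\lambda^{-\sigma+4/3}a_\lambda$. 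Bernstein in the $d-1$ transverse variables then yields $\lambda^{-\sigma+d/2+5/6}a_\lambda$. So the exponent $6$ and the threshold $5/6$ come from this cubic Gagliardo--Nirenberg step, not from interpolating an $L^2_t$ smoothing bound.

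\textbf{Secondary issue.} Your direct $H^{\sigma-1}$ estimate for $u_0-u_1$ is not routine. The term $(J(u_0)-J(u_1))\Delta_xu_1$ loses a derivative: it needs $u_1\in H^{\sigma+1}$ or $\Delta_xu_1\in L^1_tL^\infty_x$, and neither is available. The extrinsic difference is not tangent, so the K\"ahler cancellations do not apply to it. The paper instead indicates the homotopy method: it estimates the tangent field $\partial_\zeta v$, whose linearized covariant equation retains those cancellations.
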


The desired solutions in Theorem \ref{LWP2} and \ref{LWP1} will be constructed by compactness arguments. However, the usual smooth approximation procedure for initial data cannot be applied directly, as the approximating sequence should take values in the target manifolds.
To preserve this geometric constraint, we regularize the initial data by evolving it along the heat flow of harmonic map for a short time interval $[0, \varepsilon]$.
By taking the heat time $\varepsilon$ as the approximation parameter, the parabolic smoothing effect guarantees a smooth sequence that satisfies the target manifold constraint, and naturally converges to the original initial data as $\varepsilon \rightarrow 0$.

\subsection{Priori works on well-posedness}

We briefly recall the non-exhaustive works concerning the well-posedness for SMF \eqref{SMF} and the Landau-Lifshitz equation \eqref{LL}.
The local theory for SMF \eqref{SMF} with compact K\"ahler targets was developed by Ding-Wang \cite{DW} and McGahagan \cite{M} using different approximation schemes.
For the one-dimensional domain case, Rodnianski-Rubinstein-Staffilani \cite{RRS} proved the global well-posedness for SMF from $\mathbb{R}$ and $\mathbb{T}$ into compact K\"ahler manifolds and Riemann surfaces, respectively, in $H^{k}_x$ for integers $k > 2$, which partially solves the Wei-Yue Ding's Conjecture \cite{D}.
The main ideas in \cite{RRS} is to find a global frame using parallel transport, and then translate SMF into NLS where classical methods for dispersive equations can be applied, such as Strichartz estimates.
However, since $\mathbb{T}$ is not simply-connected, the use of parallel transport introduces holonomy and the discussions in \cite{RRS} become more delicate. 

From a different perspective, recently Wang and the second author \cite{WZ} proved that SMF from $\mathbb{T}$ into compact K\"ahler manifolds is globally well-posed in $H_x^{k}$ for integers $k > 2$.
The novelty of their method is to fully exploit the balance laws of SMF, combining with a new type of div-curl lemma which was first introduced by the second author \cite{Z1}, to get some nonlinear estimates that close the energy estimates.
Neither global frames nor dispersive estimates were used in their approach. Moreover, their method can be also applied to the SMF from $\mathbb{R}$. 

The Landau-Lifshitz equation \eqref{LL} on $\mathbb{R}^d$ has been studied using modified Schr\"odinger map flow (mSMF); see \cite{BIKT2,CSU,NSU1,KN}\footnote{
    The estimates on $\psi_k$ (the solutions for mSMF) can be transferred back to the original solution $u$ using the relations between $\psi_k$ and $u$, Sobolev embeddings and an inductive argument; see Section \ref{1.3} and also \cite{BIKT2} for detailed discussions.
}.
When $d \geq 2$, Bejenaru \cite{B1} and Ionescu-Kenig \cite{IK1} established the small-data local well-posedness in subcritical Sobolev spaces $H_x^{(d/2)+}$, 
where they use stereographic projection to translate \eqref{LL} into a derivative NLS, and analyze it perturbatively.
Using this stereographic model, the small data global well-posedness in critical Besov spaces was obtained in \cite{B2,IK2}.
However, it turns out that this stereographic model becomes less effective when considering well-posedness in critical Sobolev spaces.
In \cite{BIKT2}, the authors continued the study of mSMF, and proved the small data global well-posdeness for \eqref{LL} in critical Sobolev spaces with dimensions $d \geq 4$ under Coulomb gauge. 
This result was later improved to all dimensions $d \geq 2$ by Bejenaru-Ionescu-Kenig-Tataru \cite{BIKT} in the caloric gauge setting.
Very recently, Li \cite{Li1,Li2} generalized the results in \cite{BIKT} to general compact K\"ahler targets.

\subsection{Outline of the proof for Theorem \ref{LWP2}}\label{1.3}

To construct the approximating sequence for given initial data $\phi \in H^{\sigma}(\mathbb{T}^d; \mathbb{S}^2)$, we consider the heat flow of harmonic map
\begin{equation*}
    \begin{cases}
        \partial_s\widetilde{u} = D_k\partial_k\widetilde{u} = \Delta_x\widetilde{u} + |\nabla_x\widetilde{u}|^2\widetilde{u}, \\ 
        s = 0\colon \widetilde{u} = \phi,
    \end{cases}
\end{equation*}
where we note that $\widetilde{u}$ is the unit normal to the tangent space $T_{\widetilde{u}}\mathbb{S}^2$, and hence by \eqref{DV}
\begin{equation*}
    D_k\partial_k\widetilde{u} = \Delta_x\widetilde{u} - \langle \Delta_x\widetilde{u}, \widetilde{u}\rangle \widetilde{u} = \Delta_x \widetilde{u} + |\nabla_x \widetilde{u}|^2\widetilde{u}.
\end{equation*}
For $0 < \varepsilon \ll 1$ we set $\phi^{\varepsilon}(x) := \widetilde{u}(\varepsilon, x)$, then $\phi^{\varepsilon} \in C^{\infty}(\mathbb{T}^d; \mathbb{S}^2)$ and $\phi^{\varepsilon} \rightarrow \phi$ in $H^\sigma(\mathbb{T}^d; \mathbb{S}^2)$ as $\varepsilon \rightarrow 0$.

Let $u^{\varepsilon}(t, x)$ be the solution to \eqref{SMF} with initial data $\phi^{\varepsilon}$, where $0 \leq t \leq T = T(\varepsilon)$.
In the following we aim to prove that, for $\sigma > d/2 + 1/2$ there exists $T^* = T^*(\Vert \phi\Vert_{H_x^\sigma}) > 0$, such that for all $t \in [0, T^*]$ 
\begin{equation}\label{bound}
    \Vert u^\varepsilon(t)\Vert_{H_x^\sigma} \lesssim \Vert \phi^\varepsilon\Vert_{H_x^\sigma}.
\end{equation}
This uniform bound ensures that the approximation solutions $u^{\varepsilon}$ exist at least up to the time $T^*$.
Therefore, we can restrict $u^{\varepsilon}$ to the time interval $[0, T^*]$ and then the desired solution follows from compactness method.

As previously mentioned, it is more convenient to work on the mSMF, namely, the equation satisfied by $\partial u$ in terms of orthonormal frame, which is constructed as follows:
Given $Q \in \mathbb{S}^2$, we choose $0 \neq e_1^Q \in T_Q\mathbb{S}^2$ satisfying $|e_1^Q| = 1$, and set $e_2^Q = Q \times e_1^Q$.
We choose $O \in {\rm SO}(3)$ such that $O(t, x)Q = u(t, x)$ and then set 
\begin{equation}\label{frame}
    \begin{cases}
        e_1(t, x) = O(t, x)e_1^Q, \\ 
        e_2(t, x) = O(t, x)e_2^Q.
    \end{cases}
\end{equation}

For $\alpha = 0, 1, \cdots, d$, we introduce differential fields
\begin{equation*}
    \psi_{\alpha}(t, x) := \langle \partial_{\alpha}u(t, x), e_1(t, x)\rangle + \sqrt{-1}\langle \partial_{\alpha}u(t, x), e_2(t, x)\rangle
\end{equation*}
and connection coefficients
\begin{equation*}
    A_{\alpha}(t, x) := \langle \partial_{\alpha}e_1(t, x), e_2(t, x)\rangle,
\end{equation*}
where $\partial_0 = \partial_t$. In terms of the definitions above, we have 
\begin{equation}\label{A}
    \partial_{\alpha}u = \Re(\psi_{\alpha})e_1 + \Im(\psi_{\alpha})e_2
\end{equation}
and 
\begin{equation}\label{B}
    \begin{cases}
        \partial_{\alpha}e_1 = -\Re(\psi_{\alpha})u + A_{\alpha}e_2, \\ 
        \partial_{\alpha}e_2 = -\Im(\psi_{\alpha})u - A_{\alpha}e_1
    \end{cases}
\end{equation}
due to the orthogonality of $u, e_1, e_2$. We define $D_{\alpha}^{\bm{A}} := \partial_{\alpha} + \sqrt{-1}A_{\alpha}$. Direct computations show ($\alpha, \beta = 0, 1, \cdots, d$)
\begin{equation}\label{C1}
    D_{\alpha}^{\bm{A}}\psi_{\beta} = D_{\beta}^{\bm{A}}\psi_{\alpha}, 
\end{equation}
and 
\begin{equation}\label{C}
    \partial_{\alpha}A_{\beta} - \partial_{\beta}A_{\alpha} = \Im(\psi_{\alpha}\overline{\psi_{\beta}}).
\end{equation}

We note that the system $(\psi_{\alpha}, A_{\beta})$ is not uniquely defined yet due to the gauge freedom 
\begin{equation}\label{gauge1}
    \psi_{\alpha} \mapsto \e^{\sqrt{-1}\theta}\psi_{\alpha}, \quad A_{\beta} \mapsto A_{\beta} + \partial_{\beta}\theta,
\end{equation}
where $\theta$ is any real-valued function. To fix the gauge, we choose $\theta = \theta(t, x)$ to satisfy 
\begin{equation}\label{gauge}
    \begin{cases}
        \Delta_x\theta = -\nabla_x \cdot \bm{A}, \qquad \bm{A} = (A_1, \cdots, A_d), \\ 
        \displaystyle\partial_t\theta = -\frac{1}{|\mathbb{T}^d|}\int_{\mathbb{T}^d}A_0(t, x) \,{\rm d}x.
    \end{cases}
\end{equation}
Such $\theta$ always exists; for example, 
\begin{equation*}
    \theta(t, x) = \nabla_x^{-1}\sum_{k = 1}^d\mathcal{R}_kA_k(t, x) + C_{\theta}(t),
\end{equation*}
where $C_{\theta}$ satisfies
\begin{equation*}
    C_{\theta}'(t) = -\partial_t\nabla_x^{-1}\sum_{k = 1}^d\mathcal{R}_kA_k(t, x) - \frac{1}{|\mathbb{T}^d|}\int_{\mathbb{T}^d}A_0(t, x) \,{\rm d}x.
\end{equation*}
Here the operator $\nabla^{-1}_x$ is defined by the Fourier multiplier $\xi \mapsto |\xi|^{-1}$, and $\mathcal{R}_k$ denotes the Riesz-type transform defined by the Fourier multiplier $\xi \mapsto \sqrt{-1}\xi_k/|\xi|$.
It is clear that, after the gauge transform \eqref{gauge1}-\eqref{gauge}, we have 
\begin{equation}\label{Coulomb}
    \begin{cases}
        \nabla_x \cdot \bm{A} = 0, \\ 
        \displaystyle\frac{1}{|\mathbb{T}^d|}\int_{\mathbb{T}^d}A_0(t, x) \,{\rm d}x = 0.
    \end{cases}
\end{equation}

We call the equation satisfied by $\bm{\psi} = (\psi_1, \cdots, \psi_d)$ the mSMF; see \eqref{psi-e}.
To proceed further we need the following lemma.

\begin{lemma}[Quantitive estimates for $\bm{\psi}$]
    For $\phi \in H^{\sigma}(\mathbb{T}^d; \mathbb{S}^2)$ with $\sigma > d/2 + 1/2$ we have  
    \begin{equation}\label{psi}
        \Vert \bm{\psi}(0)\Vert_{H_x^{\sigma - 1}} \leq M(\Vert \phi\Vert_{H_x^\sigma}).
    \end{equation}
    \begin{proof}
        We refer to \cite[Lemma 2.5]{BIKT2} for the proof in the case $\phi \in H^{\sigma}(\mathbb{R}^d; \mathbb{S}^2)$.
        However, the proof for \cite[Lemma 2.5]{BIKT2} can be straightforwardly extended to the periodic case.
    \end{proof} 
\end{lemma}

Working on mSMF, it suffices to prove 
\begin{equation}\label{bound21}
    \Vert \bm{\psi}(t)\Vert_{H_x^{\sigma - 1}} \leq M = M(\Vert \bm{\psi}(0)\Vert_{H_x^{\sigma - 1}}), \quad t \in [0, T^*],
\end{equation}
where $T^* = T^*(\Vert \bm{\psi}(0)\Vert_{H_x^{\sigma - 1}}) > 0$. Once \eqref{bound21} is obtained, from \eqref{A}-\eqref{B} and \eqref{psi}, an inductive argument then shows (see \cite{BIKT2})
\begin{equation*}
    \Vert u(t)\Vert_{H_x^\sigma} \leq M(\Vert \phi\Vert_{H_x^\sigma}), \qquad t \in [0, T^*].
\end{equation*}

The proof for \eqref{bound21} will be carried out through a bootstrap argument.
For simplicity we denote by $u := u^\varepsilon$ and $\phi := \phi^\varepsilon$.
Suppose $\sigma = (d + 1)/2 + \varepsilon$ and we only consider the case $0 < \varepsilon \leq 1/2$\footnote{
When $\varepsilon > 1/2, \sigma > d/2 + 1$, and the local well-posedness for \eqref{LL} follows from \cite{DW,M}. 
}. We choose $\{a_{\lambda}\}_{\lambda \in 2^{\mathbb{N}}} \in \ell^2$ satisfying
\begin{equation}\label{fre1}
    \begin{cases}
        \lambda^{\sigma - 1}\Vert P_{\lambda}\bm{\psi}(0)\Vert_{L_x^2} \lesssim a_{\lambda}, \\ 
        \displaystyle \left(\sum_{\lambda \geq 1}a_{\lambda}^2\right)^{1/2} \lesssim \Vert \bm{\psi}(0)\Vert_{H_x^{\sigma - 1}}, \\ 
        a_{\lambda} \lesssim 2^{|k - k'|\delta}a_{\lambda'} \qquad (\lambda = 2^k, \lambda' = 2^{k'}),
    \end{cases}
\end{equation}
where $P_{\lambda}$ is the Littlewood-Paley operators (see Section \ref{LPP}) and $\delta = \delta(\sigma, d) > 0$ is a constant sufficiently small.
Here, $\{a_{\lambda}\}$ is referred to as the frequency envelope\footnote{
    This notion was originally introduced by Tao \cite{T}.
} associated with $\bm{\psi}(0) \in H_x^{\sigma - 1}$. 

For $a \in \mathbb{T}^d$, we denote by $f^a(x) := f(x - a)$ the translation of a function $f$. 
We formulate the following bootstrap assumptions ($i, k, \ell = 1, \cdots, d$):
\begin{gather}
    \label{11}\Vert P_{\lambda}\bm{\psi}(t)\Vert_{L_x^2} \leq t^{-\frac{\varepsilon}{8}}\lambda^{-\sigma + 1}a_{\lambda}, \\ 
    \label{13}\sup_a\left\Vert \Vert P_{\mu}\psi_k^a\Vert_{L_{\widehat{x}_i}^2}\Vert P_{\lambda, e_i}\psi_{\ell}\Vert_{L_{\widehat{x}_i}^2} \right\Vert_{L_{t, x_i}^2} \leq t^{-\frac{\varepsilon}{8}}\mu^{-\sigma + 1}\lambda^{-\sigma + 1}a_{\mu}a_{\lambda} \quad (\mu \ll \lambda), \\
    \label{20}\Vert P_{\lambda}\bm{\psi}\Vert_{L_{t, y}^4L_z^2} \leq t^{-\frac{\varepsilon}{8}}\lambda^{-\sigma + \frac{3}{2}}a_{\lambda}, 
\end{gather}
where $x = (y, z) \in \mathbb{T}^3 \times \mathbb{T}^{d - 3}$, $\widehat{x}_i = (x_1, \cdots, x_{i - 1}, x_{i + 1}, \cdots, x_d) \in \mathbb{T}^{d - 1}$, $P_{\lambda}u = \sum_{i = 1}^dP_{\lambda, e_i}u$, $\widehat{P_{\lambda, e_i}u}(\xi) = Q_{e_i}(\xi/|\xi|)\widehat{P_{\lambda}u}(\xi)$ and
\begin{equation*}
    {\rm supp}\ Q_{e_i} \subseteq \{\xi = (\xi_1, \cdots, \xi_d) \in \mathbb{Z}^d \colon |\xi_i| \gtrsim |\xi|\}, \qquad i = 1, \cdots, d.
\end{equation*}
Our goal is to improve these bounds to 
\begin{gather}
    \label{apri}\Vert P_{\lambda}\bm{\psi}(t)\Vert_{L_x^2} \lesssim \lambda^{-\sigma + 1}a_{\lambda}, \\ 
    \label{16}\sup_a\left\Vert \Vert P_{\mu}\psi_k^a\Vert_{L_{\widehat{x}_i}^2}\Vert P_{\lambda, e_i}\psi_{\ell}\Vert_{L_{\widehat{x}_i}^2} \right\Vert_{L_{t, x_i}^2} \lesssim \mu^{-\sigma + 1}\lambda^{-\sigma + 1}a_{\mu}a_{\lambda} \quad (\mu \ll \lambda), \\
    \label{17}\Vert P_{\lambda}\bm{\psi}\Vert_{L_{t, x}^4} \lesssim \lambda^{-\sigma + \frac{3}{2}}a_{\lambda},
\end{gather}
where $t \in [0, T^*]$ and $T^* = T^*(\Vert \bm{\psi}(0)\Vert_{H_x^{\sigma - 1}}) > 0$.
The proof for this bootstrap process will be completed in Sections \ref{EnergyEs}-\ref{Morawetzx}.
To prove the weak Lipschitz dependence, namely, to estimate the $H_x^{\sigma - 1}$ norm for the difference of two solutions we use the homotopy method; see Section \ref{34}.

\subsection{Outline of the proof for Theorem \ref{LWP1}}

Similar to the proof for Theorem \ref{LWP2}, we first employ an approximate procedure, which leads us to solve the following heat flow:
\begin{equation*}
    \begin{cases}
        \partial_s\widetilde{u} = D_k\partial_k\widetilde{u}, \\ 
        s = 0\colon \widetilde{u} = \phi,
    \end{cases}
\end{equation*}
where $\phi$ is a smooth initial data to \eqref{SMF}. 
For $0 < \varepsilon \ll 1$ we set $\phi^{\varepsilon}(x) := \widetilde{u}(0, x)$ the approximation of $\phi$ and $u^{\varepsilon}$ as the evolution along \eqref{SMF} with initial data $\phi^\varepsilon$.
With a slight abuse of notation we also set $u := u^{\varepsilon}$ and $\phi := \phi^{\varepsilon}$, then it suffices to construct the desired solution by proving 
\begin{equation}\label{24}
    \Vert u(t)\Vert_{H_x^\sigma} \lesssim \Vert \phi\Vert_{H_x^\sigma}, \qquad \sigma > \frac{d}{2} + \frac{5}{6},\quad t \in [0, T^*],
\end{equation}
where $T^* > 0$ depending only on the $H^{\sigma}_x$ size of $\phi$.

Again, we use a bootstrap argument to prove \eqref{24}. For $\sigma > d/2 + 5/6$ we choose a frequency envelope $\{b_{\lambda}\} \in \ell^2$ associated with $\phi \in H_x^{\sigma}$:
\begin{equation*}
    \begin{cases}
                \lambda^{\sigma}\Vert P_{\lambda}\phi\Vert_{L_x^2} \lesssim b_{\lambda}, \\ 
        \displaystyle \left(\sum_{\lambda \geq 1}b_{\lambda}^2\right)^{1/2} \lesssim \Vert \phi\Vert_{H_x^{\sigma}}, \\ 
        b_{\lambda} \lesssim 2^{|\ell - \ell'|\delta}b_{\lambda'} \qquad (\lambda = 2^{\ell}, \lambda' = 2^{\ell'}),
    \end{cases}
\end{equation*}
where $\delta = \delta(\sigma, d) > 0$ is a small constant. We assume
\begin{gather}
    \label{boot1}\Vert \nabla_xP_{\lambda}u(t)\Vert_{L_x^2} \leq t^{-\frac{1}{24}}\lambda^{-\sigma + 1}a_{\lambda}, \\
    \label{boot2}\Vert \nabla_xP_{\lambda}u\Vert_{L_t^6L_x^{\infty}} \leq t^{-\frac{1}{24}}\lambda^{-\sigma + \frac{d}{2} + \frac{5}{6}}a_{\lambda}.
\end{gather}
Under the assumptions above, in Sections \ref{31}-\ref{Linfty} we will prove
\begin{gather}
    \label{bound1}\Vert \nabla_xP_{\lambda}u(t)\Vert_{L_x^2} \lesssim \lambda^{-\sigma + 1}a_{\lambda}, \\
    \label{bound2}\Vert \nabla_xP_{\lambda}u\Vert_{L_t^6L_x^{\infty}} \lesssim \lambda^{-\sigma + \frac{d}{2} + \frac{5}{6}}a_{\lambda},
\end{gather}
where $t \in [0, T^*]$ and $T^* = T^*(\Vert \phi\Vert_{H_x^\sigma}) > 0$. It is clear that \eqref{24} follows from \eqref{bound1}, and we have the desired solution $u$ which satisfies \eqref{Lwuqiong} by \eqref{bound2}.
The weak Lipschitz dependence can be obtained in the same way as for the Landau-Lifshitz equation \eqref{LL} in Section \ref{34} using homotopy method, and we omit the details.

\section{Notations and Preliminaries}\label{2}

\subsection{Littlewood-Paley theory for periodic functions}\label{LPP}

Let $d \geq 2$ be an integer. For $f \in L^1(\mathbb{T}^d)$ and $\xi \in \mathbb{Z}^d$, the Fourier blocks $\widehat{f}(\xi)$ are defined by 
\begin{equation*}
    \widehat{f}(\xi) := \frac{1}{(2\pi)^d}\int_{\mathbb{T}^d}f(x)\mathrm{e}^{-\sqrt{-1}\xi \cdot x} \,{\rm d}x.
\end{equation*}
For dyadic number $\lambda \in 2^{\mathbb{N}}$, we set 
\begin{equation*}
    A_{\lambda} := \{\xi = (\xi_1, \cdots, \xi_d) \in \mathbb{Z}^d\colon |\xi_i| \leq \lambda, i = 1, \cdots, d\}.
\end{equation*}
The Littlewood-Paley projection operators are defined as (set $A_{1/2} = \varnothing$)
\begin{align*}
    P_{\lambda}f(x) := \sum_{\xi \in A_{\lambda}\setminus A_{\lambda/2}}\widehat{f}(\xi)\mathrm{e}^{\sqrt{-1}\xi \cdot x}, \qquad P_{\leq \lambda} := \sum_{\mu\ {\rm dyadic}, \mu \leq \lambda}P_{\mu}.
\end{align*}
Then we have the (inhomogeneous) Littlewood-Paley decomposition
\begin{equation*}
    f = P_{< 1}f + \sum_{\lambda\ {\rm dyadic}, \lambda > 1}P_{\lambda}f.
\end{equation*}
With a slight abuse of notations we also write $f = \sum_{\lambda \geq 1}P_{\lambda}f$.

The projection operators $P_{\lambda}$ and $P_{\leq \lambda}$ for periodic functions share similar properties with those in the Euclidean setting:

\begin{proposition}\label{prop21}
    Given dyadic numbers $\lambda$ and $\mu$.
    \begin{itemize}
        \item (Boundedness) For $f \in L^p(\mathbb{T}^d), 1 < p \leq \infty$, we have 
        \begin{equation*}
            \Vert P_{\lambda}f\Vert_{L^p} \lesssim \Vert f\Vert_{L^p}, \qquad \Vert P_{\leq \lambda}f\Vert_{L^p} \lesssim \Vert f\Vert_{L^p};
        \end{equation*}
        \item (Bernstein inequalities) Let $\sigma \geq 0, 1 \leq q \leq p \leq \infty$. There holds 
        \begin{align*}
            &\Vert (\sqrt{-\Delta})^{\sigma}P_{\lambda}f\Vert_{L^p} \sim \lambda^\sigma\Vert P_{\lambda}f\Vert_{L^q}, \\ 
            &\Vert P_{\lambda}f\Vert_{L^p} \lesssim \lambda^{d\left(\frac{1}{q} - \frac{1}{p}\right)}\Vert P_{\lambda}f\Vert_{L^q}, \\ 
            &\Vert P_{\leq \lambda}f\Vert_{L^p} \lesssim \lambda^{d\left(\frac{1}{q} - \frac{1}{p}\right)}\Vert P_{\leq \lambda}f\Vert_{L^q}.
        \end{align*}
    \end{itemize}
\end{proposition}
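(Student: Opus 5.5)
The plan is to reduce everything to kernel estimates: write each operator as convolution with an explicit periodic kernel, then apply Young's inequality, plus the one-dimensional Riesz theorem where the kernel is not integrable. A preliminary caveat concerns the endpoint $p=\infty$. With the sharp cube cutoffs $A_\lambda$ used above, $P_{\le\lambda}$ is the tensor product $S_\lambda^{(1)}\otimes\cdots\otimes S_\lambda^{(d)}$ of one-dimensional partial Fourier sums. Its kernel is the product of Dirichlet kernels, with $L^1$ norm of order $(\log\lambda)^d$, so $\lambda$-uniform boundedness at $p=\infty$ cannot hold for the sharp projections. I would prove the statement as written for $1<p<\infty$. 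For the endpoint, including the Bernstein inequalities with $p=\infty$, I would use smoothed projections $\widetilde P_\lambda$ with multiplier $\chi(\xi/\lambda)-\chi(2\xi/\lambda)$, $\chi\in C_c^\infty$. Only these smooth versions are needed in the later estimates.

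\textbf{Boundedness for $1<p<\infty$.}
\begin{enumerate}
\item By M.~Riesz's theorem, the periodic Hilbert transform is bounded on $L^p(\mathbb{T})$.
\item Write $S^{(1)}_\lambda f = \tfrac{1}{2\sqrt{-1}}\bigl(\mathrm e^{\sqrt{-1}\lambda x}H(\mathrm e^{-\sqrt{-1}\lambda x}f)-\mathrm e^{-\sqrt{-1}\lambda x}H(\mathrm e^{\sqrt{-1}\lambda x}f)\bigr)$ plus a harmless rank-one term. This gives $\|S^{(1)}_\lambda\|_{L^p\to L^p}\lesssim_p 1$ uniformly in $\lambda$.
\item Applying this in each variable $x_i$ separately (Fubini), we get $\|P_{\le\lambda}f\|_{L^p(\mathbb{T}^d)}\lesssim\|f\|_{L^p}$.
\item Then $P_\lambda=P_{\le\lambda}-P_{\le\lambda/2}$ is bounded as well.
\end{enumerate}

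\textbf{Bernstein inequalities.} I would use a de la Vallée Poussin-type reproducing kernel: a smooth multiplier $\eta(\xi/\lambda)$ equal to $1$ on $A_\lambda$ and supported in $A_{4\lambda}$, so that $P_\lambda f = K_\lambda * P_\lambda f$ and $P_{\le\lambda}f=K_\lambda*P_{\le\lambda}f$.
\begin{enumerate}
\item By Poisson summation, $K_\lambda(x)=\sum_{n\in\mathbb{Z}^d}\lambda^d\check\eta(\lambda(x+2\pi n))$. The Schwartz decay of $\check\eta$ gives $\|K_\lambda\|_{L^r(\mathbb{T}^d)}\lesssim\lambda^{d(1-1/r)}$ for $\lambda\ge1$.
\item Young's inequality with $1+1/p=1/r+1/q$ then gives $\|P_\lambda f\|_{L^p}\lesssim\lambda^{d(1/q-1/p)}\|P_\lambda f\|_{L^q}$, and the same bound for $P_{\le\lambda}$.
\item For the derivative bound, note that on ${\rm supp}\,\widehat{P_\lambda f}$ we have $|\xi|\sim\lambda$ (since $\lambda/2<|\xi|_\infty\le\lambda$). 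Write $(\sqrt{-\Delta})^\sigma P_\lambda f=\lambda^\sigma L_\lambda*P_\lambda f$, where $L_\lambda$ is the periodization of the rescaled Schwartz kernel of $|\zeta|^\sigma\tilde\eta(\zeta)$ and $\tilde\eta$ is a smooth bump equal to $1$ on $\{\tfrac12\le|\zeta|_\infty\le1\}$ and vanishing near $0$.
\item Combining $\|L_\lambda\|_{L^r}\lesssim\lambda^{d(1-1/r)}$ with Young gives the upper bound $\|(\sqrt{-\Delta})^\sigma P_\lambda f\|_{L^p}\lesssim\lambda^{\sigma}\lambda^{d(1/q-1/p)}\|P_\lambda f\|_{L^q}$.
\item The reverse inequality, meaningful for $p=q$, follows in the same way from the multiplier $|\zeta|^{-\sigma}\tilde\eta(\zeta)$.
\end{enumerate}

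The main obstacle is the endpoint behaviour just described. Sharp cube projections are not uniformly bounded on $L^\infty$ or $L^1$, so the precise form of the statement has to be handled with care. My first choice is to replace $P_\lambda$ by its smooth counterpart throughout, since every property above then follows from the single kernel estimate in step 1 of the Bernstein argument. Alternatively, one could keep the sharp projections for $1<p<\infty$ and sandwich them with smooth ones, $P_\lambda=P_\lambda\widetilde P_\lambda$, whenever an $L^\infty$ Bernstein bound is needed. The Poisson summation kernel estimates themselves are routine.
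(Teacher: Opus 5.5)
Your proposal is correct and more careful than the statement. The paper gives no argument of its own and simply refers to \cite{DHWX}.

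The real difference is that you work with the projections the paper actually defines, the sharp cube cutoffs. You are right that for these the claim at $p=\infty$ is false. For $P_{\le\lambda}$ this is the $(\log\lambda)^d$ growth of the cubic Lebesgue constant. For $P_\lambda$ itself, test on $g(x_1)\mathrm{e}^{\sqrt{-1}kx_2}$ with $\lambda/2<k\le\lambda$; on such functions $P_\lambda$ acts as the one-dimensional partial sum $S_\lambda$ in $x_1$.

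A convolution operator has the same norm on $L^1$ and on $L^\infty$. So the range $1<p\le\infty$ is suspicious for any cutoff: with smooth cutoffs both endpoints hold, and with sharp ones both fail. Whatever \cite{DHWX} proves at $p=\infty$ must therefore concern smooth multipliers. In that setting the whole proposition, for $1\le p\le\infty$, follows from the single bound $\|K_\lambda\|_{L^r}\lesssim\lambda^{d(1-1/r)}$ and Young's inequality, without Riesz's theorem. Your Riesz-plus-tensorization argument is the price of keeping sharp cutoffs, and it reaches exactly $1<p<\infty$. (The boundary modes $\pm\lambda$ give a rank-two rather than rank-one correction, which is equally harmless.)

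Your preference for smooth projections is in fact forced by the rest of the paper. The commutator bound of Theorem~\ref{Comu} fails for sharp cutoffs in every $L^p$: for example, $[P_\lambda,\mathrm{e}^{\sqrt{-1}x_1}]\mathrm{e}^{\sqrt{-1}\lambda x_1}=-\mathrm{e}^{\sqrt{-1}(\lambda+1)x_1}$. The moment bound $\int|y|^2|k_\lambda(y)|\,{\rm d}y\lesssim\lambda^{-2}$ used in Section~\ref{I1a} fails as well.

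Three small corrections.
\begin{enumerate}
\item You do not need smoothing for the Bernstein inequalities. Your reproducing-kernel argument uses only the frequency support of $P_\lambda f$ and $P_{\le\lambda}f$, so it proves them for the sharp projections at every $1\le q\le p\le\infty$. Only the $L^\infty$ boundedness of $P_\lambda$ and $P_{\le\lambda}$ has to be given up.
\item Your form of the first Bernstein line is the right one: an upper bound with the extra factor $\lambda^{d(1/q-1/p)}$, two-sided only when $p=q$. As printed, for $q<p$ the upper bound is false.
\item Because $A_{1/2}=\varnothing$, $P_1$ contains the zero frequency. So $\|(\sqrt{-\Delta})^{\sigma}P_1f\|_{L^p}\gtrsim\|P_1f\|_{L^p}$ fails for $f\equiv1$ and $\sigma>0$. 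Your bump $\tilde\eta$ vanishes near the origin, so it yields the reverse inequality only for $\lambda\ge2$ (or for mean-zero $f$), and you should say so.
\end{enumerate}
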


For the proof for Proposition \ref{prop21} and detailed discussions on Littlewood-Paley theory on periodic domains we refer to \cite{DHWX}.

For $\sigma \in \mathbb{R}, 1 \leq p, q \leq \infty$, the Besov norm is defined as 
\begin{equation*}
    \Vert f \Vert_{B^\sigma_{p, q}} := \Vert \lambda^\sigma\Vert P_{\lambda}f\Vert_{L^p}\Vert_{\ell^q}.
\end{equation*}
In particular, for the case $p = q = 2$, the space $B^\sigma_{2, 2}$ can be identified with $H^{\sigma}$, the usual suqare-integrable Sobolev spaces:
\begin{equation*}
    B^\sigma_{2, 2}(\mathbb{T}^d) = H^\sigma(\mathbb{T}^d), \qquad \forall \sigma \in \mathbb{R}.
\end{equation*}
This allows us to represent the $H^\sigma$ norm using dyadic decomposition:
\begin{equation*}
    \Vert f\Vert_{H^\sigma} = \left(\sum_{\lambda \geq 1}\left(\lambda^\sigma\Vert P_{\lambda}f\Vert_{L^2}\right)^2\right)^{\frac{1}{2}}.
\end{equation*}

For the Littlewood-Paley decomposition of the product of two functions, we have the following Bony paraproduct decomposition
\begin{equation*}
    P_{\lambda}(fg) = \sum_{\lambda_1 \lesssim \lambda_2 \sim \lambda}P_{\lambda_1}fP_{\lambda_2}g + \sum_{\lambda_2 \lesssim \lambda_1 \sim \lambda} P_{\lambda_1}fP_{\lambda_2}g + P_{\lambda}\left(\sum_{\lambda_1 \sim \lambda_2 \gtrsim \lambda}P_{\lambda_1}fP_{\lambda_2}g\right),
\end{equation*}
which can be alternatively written as 
\begin{equation}\label{Bony}
    P_{\lambda}(fg) = \sum_{\lambda_1 \ll \lambda_2 \sim \lambda}P_{\lambda_1}fP_{\lambda_2}g + \sum_{\lambda_2 \ll \lambda_1 \sim \lambda} P_{\lambda_1}fP_{\lambda_2}g + P_{\lambda}\left(\sum_{\lambda_1 \sim \lambda_2 \gtrsim \lambda}P_{\lambda_1}fP_{\lambda_2}g\right)
\end{equation}
since the finite sum $\sum_{\lambda' \sim \lambda}P_{\lambda'}$ would make no difference in this paper. 

\subsection{Balance laws for NLS and SMF}\label{Balance}

We start with $1d$ NLS
\begin{equation}\label{1NLS}
    (\sqrt{-1}\partial_t + \partial_x^2)u = \mathcal{N}(u), \qquad u\colon \mathbb{R}_+ \times \mathbb{T} \rightarrow \mathbb{C}.
\end{equation}
By multiplying $\overline{u}$ on both sides of the equation and taking imaginary parts, we get 
\begin{equation*}
    \partial_t\left(\frac{1}{2}|u|^2\right) + \partial_x\Im(\overline{u}\partial_xu) = \Im(\overline{u}\mathcal{N}(u)).
\end{equation*}
Since the Littlewood-Paley operators commutes with usual derivatives, it follows that 
\begin{equation*}
    \partial_t\left(\frac{1}{2}|P_{\lambda}u|^2\right) + \partial_x\Im(\overline{P_{\lambda}u}\partial_xP_{\lambda}u) = \Im(\overline{P_{\lambda}u}P_{\lambda}\mathcal{N}(u)).
\end{equation*}
Moreover, if we replace $\partial_x$ by $\partial_x^C := \partial_x + \sqrt{-1}C$ in \eqref{1NLS} for some real constant $C$, it is easy to verify
\begin{equation*}
    \partial_t\left(\frac{1}{2}|P_{\lambda}u|^2\right) + \partial_x\Im(\overline{P_{\lambda}u}\partial_x^CP_{\lambda}u) = \Im(\overline{P_{\lambda}u}P_{\lambda}\mathcal{N}(u)),
\end{equation*}
and we also have the momentum balance law 
\begin{equation*}
    \partial_t\Im(\overline{P_{\lambda}u}\partial_x^CP_{\lambda}u) + \partial_x\left(2|\partial_x^CP_{\lambda}u|^2 - \partial_x^2\frac{|P_{\lambda}u|^2}{2}\right) = \Re(P_{\lambda}\mathcal{N}(u)\overline{\partial_x^CP_{\lambda}u} - P_{\lambda}u\overline{\partial_x^CP_{\lambda}\mathcal{N}(u)}).
\end{equation*}

For $d$-dimensional NLS 
\begin{equation*}
    (\sqrt{-1}\partial_t + \Delta_{\bm{C}})u = \mathcal{N}(u), \qquad u\colon [0, T] \times \mathbb{T}^d \rightarrow \mathbb{C},
\end{equation*}
where $\bm{C} = (C_1, \cdots, C_d) \in \mathbb{R}^d$, $D_i^{\bm{C}} := \partial_i + \sqrt{-1}C_i, i = 1, \cdots, d$, $\nabla_{\bm{C}} := (D_1^{\bm{C}}, \cdots, D_d^{\bm{C}})$ and $\Delta_{\bm{C}} := \nabla_{\bm{C}}^2 = (\nabla_x + \sqrt{-1}\bm{C})^2 = \Delta_x + 2\sqrt{-1}\bm{C} \cdot \nabla_x - |\bm{C}|^2$,
there hold
\begin{equation*}
    \partial_t\left(\frac{1}{2}|P_{\lambda}u|^2\right) + \nabla_x \cdot \Im(\overline{P_{\lambda}u}\nabla_{\bm{C}}P_{\lambda}u) = \Im(\overline{P_{\lambda}u}P_{\lambda}\mathcal{N}(u))
\end{equation*}
and ($i = 1, \cdots, d$)
\begin{equation*}
    \partial_t\Im(\overline{P_{\lambda}u}D_i^{\bm{C}}P_{\lambda}u) + \partial_j\left(2\Re(\overline{D_i^{\bm{C}}P_{\lambda}u}D_j^{\bm{C}}P_{\lambda}u) - \partial_i\partial_j\frac{|P_{\lambda}u|^2}{2}\right) = \{P_{\lambda}\mathcal{N}(u), P_{\lambda}u\}^{\bm{C}}_i,
\end{equation*}
where
\begin{equation*}
    \{f, g\}^{\bm{C}} := \Re(f\overline{\nabla_{\bm{C}}g} - g\overline{\nabla_{\bm{C}}f}) = (\{f, g\}^{\bm{C}}_i)_{1 \leq i \leq d}
\end{equation*}
is the momentum bracket.

Generally, we consider the SMF 
\begin{equation}\label{SSS}
    \partial_tu = J(u)D_k\partial_ku, \qquad u\colon [0, T] \times \mathbb{T}^d \rightarrow \mathcal{N},
\end{equation}
where $\mathcal{N}$ is isometrically embedded into $\mathbb{R}^N$ for sufficiently large $N$.
With the asummptions on $J$ and unit normals $n_i(u)$ in Section \ref{intro}, we now compute the energy and momentum balance laws for \eqref{SSS}.

Applying $\partial_{\ell}P_{\lambda}$ on both sides of \eqref{SSS}, we obtain
\begin{equation*}
    \partial_t\partial_{\ell}P_{\lambda}u = J(u)D_k\partial_k\partial_{\ell}P_{\lambda}u + \mathcal{R}_{\lambda},
\end{equation*}
where $\mathcal{R}_{\lambda} := [P_{\lambda}\partial_{\ell}, J(u)]D_k\partial_ku + J(u)[P_{\lambda}\partial_{\ell}, D_k]\partial_ku$ and the commutator is defined by 
\begin{equation*}
    [A, B] := AB - BA.
\end{equation*}
By taking inner product with $\partial_{\ell}P_{\lambda}u$ on both sides of the equation above we have  
\begin{equation}\label{enerb}
    \partial_t\frac{1}{2}|\nabla_xP_{\lambda}u|^2 = \langle \partial_{\ell}P_{\lambda}u, J(u)D_k\partial_k\partial_{\ell}P_{\lambda}u\rangle + \langle \partial_{\ell}P_{\lambda}u, \mathcal{R}_{\lambda}\rangle.
\end{equation}
Moreover, from the anti-symmetry of $J$, \eqref{DV} and Leibniz rule
\begin{align*}
    &\langle \partial_{\ell}P_{\lambda}u, J(u)D_k\partial_k\partial_{\ell}P_{\lambda}u\rangle \\
    = &-\langle J(u)\partial_{\ell}P_{\lambda}u, D_k\partial_k\partial_{\ell}P_{\lambda}u\rangle \\ 
    = &-\langle J(u)\partial_{\ell}P_{\lambda}u, \partial_k\partial_k\partial_{\ell}P_{\lambda}u\rangle + \langle \partial_k\partial_k\partial_{\ell}P_{\lambda}u, n_i(u)\rangle\langle J(u)\partial_{\ell}P_{\lambda}u, n_i(u)\rangle \\ 
    = &-\partial_k\langle \partial_k\partial_{\ell}P_{\lambda}u, J(u)\partial_{\ell}P_{\lambda}u\rangle + \langle \partial_k\partial_{\ell}P_{\lambda}u, \partial_kJ(u)\partial_{\ell}P_{\lambda}u\rangle \\ 
    &+ \langle \partial_k\partial_k\partial_{\ell}P_{\lambda}u, n_i(u)\rangle\langle J(u)\partial_{\ell}P_{\lambda}u, n_i(u)\rangle \\
    = &-\partial_k\langle \partial_k\partial_{\ell}P_{\lambda}u, J(u)\partial_{\ell}P_{\lambda}u\rangle + \langle \partial_k\partial_{\ell}P_{\lambda}u, (\partial_kJ(u))\partial_{\ell}P_{\lambda}u\rangle \\ 
    &+ \langle \partial_k\partial_k\partial_{\ell}P_{\lambda}u, n_i(u)\rangle\langle J(u)\partial_{\ell}P_{\lambda}u, n_i(u)\rangle,
\end{align*}
where $\partial_kJ \in \mathbb{R}^{N \times N}$. Note $\langle V, n_i(u)\rangle = 0, \forall V \in T_u\mathcal{N}$. It follows that 
\begin{align*}
    \langle \partial_k\partial_k\partial_{\ell}P_{\lambda}u, n_i(u)\rangle = \langle \partial_k\partial_k\partial_{\ell}P_{\lambda}u, n_i(u)\rangle - P_{\lambda}\partial_k\partial_k\langle \partial_{\ell}u, n_i(u)\rangle = -[P_{\lambda}\Delta_x, n_i(u)]\partial_{\ell}u,
\end{align*}
and 
\begin{align*}
    \langle J(u)\partial_{\ell}P_{\lambda}u, n_i(u)\rangle &= \langle P_{\lambda}J(u)\partial_{\ell}u, n_i(u)\rangle - \langle [P_{\lambda}, J(u)]\partial_{\ell}u, n_i(u) \rangle \\ 
    &= -[P_{\lambda}, n_i(u)]J(u)\partial_{\ell}u - \langle [P_{\lambda}, J(u)]\partial_{\ell}u, n_i(u) \rangle.
\end{align*}
Now we obtain the energy balance law
\begin{equation}\label{E1}
\begin{aligned}
    &\partial_t\frac{1}{2}|\nabla_xP_{\lambda}u|^2 + \partial_k\langle \partial_k\partial_{\ell}P_{\lambda}u, J(u)\partial_{\ell}P_{\lambda}u\rangle \\ 
    = \,&\langle \mathcal{R}_{\lambda}, \partial_{\ell}P_{\lambda}u\rangle + \langle \partial_k\partial_{\ell}P_{\lambda}u, (\partial_kJ(u))\partial_{\ell}P_{\lambda}u\rangle \\ 
    &+ \left([P_{\lambda}\Delta_x, n_i(u)]\partial_{\ell}u\right)\left([P_{\lambda}, n_i(u)]J(u)\partial_{\ell}u + \langle [P_{\lambda}, J(u)]\partial_{\ell}u, n_i(u) \rangle\right).
\end{aligned}
\end{equation}

As we have mentioned before, the crucial estimates ($L_{t, x}^2$ bilinear estimate and $L_t^6L_x^{\infty}$ estimate) in this paper are obtained by combining the balance laws of the equation and the div-curl lemma.
However, since the div-curl lemma can be only applied to the case of spatial dimension one (see Lemma \ref{divcur}), we need to further integrate $(d - 1)$ coordinates in \eqref{E1}:
\begin{align}\label{dc11}
    \partial_t\int_{\mathbb{T}^{d - 1}}\frac{1}{2}|\nabla_xP_{\lambda}u|^2 \,{\rm d}\widehat{x}_1 + \partial_1\int_{\mathbb{T}^{d - 1}}\langle \partial_1\partial_{\ell}P_{\lambda}u, J(u)\partial_{\ell}P_{\lambda}u\rangle \,{\rm d}\widehat{x}_1 = \mathcal{G}_{\lambda},
\end{align}
where $x = (x_1, \widehat{x}_1) \in \mathbb{T} \times \mathbb{T}^{d - 1}, \widehat{x}_1 = (x_2, \cdots, x_d)$ and
    \begin{align*}
            \mathcal{G}_{\lambda} &:= \int_{\mathbb{T}^{d - 1}}\langle \mathcal{R}_{\lambda}, \partial_{\ell}P_{\lambda}u\rangle \,{\rm d}\widehat{x}_1 + \int_{\mathbb{T}^{d - 1}}\langle \partial_k\partial_{\ell}P_{\lambda}u, (\partial_kJ(u))\partial_{\ell}P_{\lambda}u\rangle \,{\rm d}\widehat{x}_1 \\ 
    &\quad + \int_{\mathbb{T}^{d - 1}}\left([P_{\lambda}\Delta_x, n_i(u)]\partial_{\ell}u\right)\left([P_{\lambda}, n_i(u)]J(u)\partial_{\ell}u + \langle [P_{\lambda}, J(u)]\partial_{\ell}u, n_i(u) \rangle\right) \,{\rm d}\widehat{x}_1.
\end{align*}

For the momentum balance laws, we first have  
\begin{equation*}
    \partial_t\langle \partial_1\partial_{\ell}P_{\lambda}u, J(u)\partial_{\ell}P_{\lambda}u\rangle = \langle \partial_1\partial_{\ell}\partial_tP_{\lambda}u, J(u)\partial_{\ell}P_{\lambda}u\rangle + \langle \partial_1\partial_{\ell}P_{\lambda}u, \partial_tJ(u)\partial_{\ell}P_{\lambda}u\rangle.
\end{equation*}
On the one hand, 
    \begin{align*}
            \partial_1\partial_{\ell}\partial_tP_{\lambda}u &= \partial_1\partial_{\ell}P_{\lambda}J(u)D_k\partial_ku \\
    &= J(u)\partial_1\partial_{\ell}P_{\lambda}D_k\partial_ku + [P_{\lambda}\partial_1\partial_{\ell}, J(u)]D_k\partial_ku \\ 
    &= J(u)\partial_1\partial_{\ell}D_k\partial_kP_{\lambda}u + J(u)\partial_1\partial_{\ell}[P_{\lambda}, D_k]\partial_ku + [P_{\lambda}\partial_1\partial_{\ell}, J(u)]D_k\partial_ku \\ 
    &= J(u)\partial_1\partial_{\ell}\partial_k\partial_kP_{\lambda}u - J(u)\partial_1\partial_{\ell}(\langle \partial_k\partial_kP_{\lambda}u, n_i(u)\rangle n_i(u)) \\
    &\quad + J(u)\partial_1\partial_{\ell}[P_{\lambda}, D_k]\partial_ku + [P_{\lambda}\partial_1\partial_{\ell}, J(u)]D_k\partial_ku \\
    &= J(u)\partial_1\partial_{\ell}\partial_k\partial_kP_{\lambda}u + J(u)\partial_1\partial_{\ell}(([P_{\lambda}\partial_k, n_i(u)]\partial_ku) n_i(u)) \\
    &\quad + J(u)\partial_1\partial_{\ell}[P_{\lambda}, D_k]\partial_ku + [P_{\lambda}\partial_1\partial_{\ell}, J(u)]D_k\partial_ku.
    \end{align*}
Therefore, 
    \begin{align*}
            \langle \partial_1\partial_{\ell}\partial_tP_{\lambda}u, J(u)\partial_{\ell}P_{\lambda}u\rangle &= \langle \partial_1\partial_{\ell}\partial_k\partial_kP_{\lambda}u, \partial_{\ell}P_{\lambda}u\rangle + \mathcal{H}^1_{\lambda} \\ 
    &= \partial_k\langle \partial_1\partial_{\ell}\partial_kP_{\lambda}u, \partial_{\ell}P_{\lambda}u\rangle - \langle \partial_1\partial_{\ell}\partial_kP_{\lambda}u, \partial_{\ell}\partial_kP_{\lambda}u\rangle + \mathcal{H}_{\lambda}^1 \\
    &= \partial_k\langle \partial_1\partial_{\ell}\partial_kP_{\lambda}u, \partial_{\ell}P_{\lambda}u\rangle - \partial_1\sum_{k, \ell = 1}^d\frac{1}{2}|\partial_{\ell}\partial_kP_{\lambda}u|^2 + \mathcal{H}_{\lambda}^1,
    \end{align*}
where 
\begin{equation*}
\begin{aligned}
    \mathcal{H}_{\lambda}^1 &:= \langle \partial_1\partial_{\ell}(([P_{\lambda}\partial_k, n_i(u)]\partial_ku) n_i(u)), \partial_{\ell}P_{\lambda}u\rangle \\
    &\quad + \langle \partial_1\partial_{\ell}[P_{\lambda}, D_k]\partial_ku, \partial_{\ell}P_{\lambda}u\rangle +\langle [P_{\lambda}\partial_1\partial_{\ell}, J(u)]D_k\partial_ku, J(u)\partial_{\ell}P_{\lambda}u \rangle \\
    &= \langle [\partial_1\partial_{\ell}, n_i(u)]([P_{\lambda}\partial_k, n_i(u)]\partial_ku), \partial_{\ell}P_{\lambda}u\rangle + (\partial_1\partial_{\ell}([P_{\lambda}\partial_k, n_i(u)]\partial_ku))\langle \partial_{\ell}P_{\lambda}u, n_i(u)\rangle  \\
    &\quad + \langle \partial_1\partial_{\ell}[P_{\lambda}, D_k]\partial_ku, \partial_{\ell}P_{\lambda}u\rangle +\langle [P_{\lambda}\partial_1\partial_{\ell}, J(u)]D_k\partial_ku, J(u)\partial_{\ell}P_{\lambda}u \rangle \\
    &= \langle [\partial_1\partial_{\ell}, n_i(u)]([P_{\lambda}\partial_k, n_i(u)]\partial_ku), \partial_{\ell}P_{\lambda}u\rangle - (\partial_1\partial_{\ell}([P_{\lambda}\partial_k, n_i(u)]\partial_ku))([P_{\lambda}, n_i(u)]\partial_{\ell}u)  \\
    &\quad + \langle \partial_1\partial_{\ell}[P_{\lambda}, D_k]\partial_ku, \partial_{\ell}P_{\lambda}u\rangle +\langle [P_{\lambda}\partial_1\partial_{\ell}, J(u)]D_k\partial_ku, J(u)\partial_{\ell}P_{\lambda}u \rangle.
\end{aligned}
\end{equation*}
On the other hand, by Leibniz rule 
\begin{equation*}
    \partial_tJ(u)\partial_{\ell}P_{\lambda}u - \partial_{\ell}J(u)\partial_tP_{\lambda}u = (\partial_tJ(u))\partial_{\ell}P_{\lambda}u - (\partial_{\ell}J(u))\partial_tP_{\lambda}u,
\end{equation*}
and also note  
    \begin{align*}
            &\partial_{\ell}J(u)\partial_tP_{\lambda}u \\
    = &\partial_{\ell}J(u)P_{\lambda}J(u)D_k\partial_ku \\
    = &-\partial_{\ell}P_{\lambda}D_k\partial_ku + \partial_{\ell}J(u)[P_{\lambda}, J(u)]D_k\partial_ku \\ 
    = &-\partial_{\ell}D_k\partial_kP_{\lambda}u - \partial_{\ell}[P_{\lambda}, D_k]\partial_ku + \partial_{\ell}J(u)[P_{\lambda}, J(u)]D_k\partial_ku \\ 
    = &-\partial_{\ell}\partial_k\partial_kP_{\lambda}u + \partial_{\ell}(\langle \partial_k\partial_kP_{\lambda}u, n_i(u)\rangle n_i(u)) - \partial_{\ell}[P_{\lambda}, D_k]\partial_ku + \partial_{\ell}J(u)[P_{\lambda}, J(u)]D_k\partial_ku \\
    = &-\partial_{\ell}\partial_k\partial_kP_{\lambda}u - \partial_{\ell}(([P_{\lambda}\partial_k, n_i(u)]\partial_ku) n_i(u)) - \partial_{\ell}[P_{\lambda}, D_k]\partial_ku + \partial_{\ell}J(u)[P_{\lambda}, J(u)]D_k\partial_ku.
    \end{align*}
It follows that 
    \begin{align*}
            \langle \partial_1\partial_{\ell}P_{\lambda}u, \partial_tJ(u)\partial_{\ell}P_{\lambda}u\rangle &= -\langle\partial_1\partial_{\ell}P_{\lambda}u, \partial_{\ell}\partial_k\partial_kP_{\lambda}u\rangle + \mathcal{H}_{\lambda}^2 \\ 
    &= -\partial_k\langle \partial_1\partial_{\ell}P_{\lambda}u, \partial_{\ell}\partial_kP_{\lambda}u\rangle + \langle\partial_1\partial_{\ell}\partial_kP_{\lambda}u, \partial_{\ell}\partial_kP_{\lambda}u\rangle + \mathcal{H}_{\lambda}^2 \\
    &= -\partial_k\langle \partial_1\partial_{\ell}P_{\lambda}u, \partial_{\ell}\partial_kP_{\lambda}u\rangle + \partial_1\sum_{k, \ell = 1}^d\frac{1}{2}|\partial_{\ell}\partial_kP_{\lambda}u|^2+ \mathcal{H}_{\lambda}^2,
    \end{align*}
where 
\begin{equation*}
\begin{aligned}\label{Hl2}
    \mathcal{H}_{\lambda}^2 &:= - \langle\partial_{\ell}(([P_{\lambda}\partial_k, n_i(u)]\partial_ku) n_i(u)), \partial_1\partial_{\ell}P_{\lambda}u\rangle \\
    &\quad - \langle \partial_{\ell}[P_{\lambda}, D_k]\partial_ku, \partial_1\partial_{\ell}P_{\lambda}u\rangle + \langle \partial_{\ell}J(u)[P_{\lambda}, J(u)]D_k\partial_ku, \partial_1\partial_{\ell}P_{\lambda}u\rangle \\ 
    &\quad - \langle (\partial_tJ(u))\partial_{\ell}P_{\lambda}u - (\partial_{\ell}J(u))\partial_tP_{\lambda}u, \partial_1\partial_{\ell}P_{\lambda}u \rangle \\
    &= -\langle [\partial_{\ell}, n_i(u)]([P_{\lambda}\partial_k, n_i(u)]\partial_ku), \partial_1\partial_{\ell}P_{\lambda}u\rangle - (\partial_{\ell}([P_{\lambda}\partial_k, n_i(u)]\partial_ku))\langle \partial_1\partial_{\ell}P_{\lambda}u, n_i(u)\rangle \\
    &\quad - \langle \partial_{\ell}[P_{\lambda}, D_k]\partial_ku, \partial_1\partial_{\ell}P_{\lambda}u\rangle + \langle \partial_{\ell}J(u)[P_{\lambda}, J(u)]D_k\partial_ku, \partial_1\partial_{\ell}P_{\lambda}u\rangle \\ 
    &\quad - \langle (\partial_tJ(u))\partial_{\ell}P_{\lambda}u - (\partial_{\ell}J(u))\partial_tP_{\lambda}u, \partial_1\partial_{\ell}P_{\lambda}u \rangle \\
    &= -\langle [\partial_{\ell}, n_i(u)]([P_{\lambda}\partial_k, n_i(u)]\partial_ku), \partial_1\partial_{\ell}P_{\lambda}u\rangle + (\partial_{\ell}([P_{\lambda}\partial_k, n_i(u)]\partial_ku))([P_{\lambda}\partial_{\ell}, n_i(u)]\partial_1u) \\
    &\quad - \langle \partial_{\ell}[P_{\lambda}, D_k]\partial_ku, \partial_1\partial_{\ell}P_{\lambda}u\rangle + \langle \partial_{\ell}J(u)[P_{\lambda}, J(u)]D_k\partial_ku, \partial_1\partial_{\ell}P_{\lambda}u\rangle \\ 
    &\quad - \langle (\partial_tJ(u))\partial_{\ell}P_{\lambda}u - (\partial_{\ell}J(u))\partial_tP_{\lambda}u, \partial_1\partial_{\ell}P_{\lambda}u \rangle.
\end{aligned}
\end{equation*}

To conclude, 
\begin{equation*}
    \partial_t\langle \partial_1\partial_{\ell}P_{\lambda}u, J(u)\partial_{\ell}P_{\lambda}u\rangle = \partial_k\langle \partial_1\partial_{\ell}\partial_kP_{\lambda}u, \partial_{\ell}P_{\lambda}u\rangle - \partial_k\langle \partial_1\partial_{\ell}P_{\lambda}u, \partial_{\ell}\partial_kP_{\lambda}u\rangle + \mathcal{H}_{\lambda}^1 + \mathcal{H}_{\lambda}^2,
\end{equation*}
and hence
\begin{equation}\label{dc22}
\begin{aligned}
    \partial_t\int_{\mathbb{T}^{d - 1}}\langle \partial_1\partial_{\ell}P_{\lambda}u, J(u)\partial_{\ell}P_{\lambda}u\rangle \,{\rm d}\widehat{x}_1 - \partial_1\int_{\mathbb{T}^{d - 1}}(\langle \partial_{\ell}P_{\lambda}u, \partial_1^2\partial_{\ell}P_{\lambda}u\rangle - |\nabla_x\partial_1P_{\lambda}u|^2) \,{\rm d}\widehat{x}_1 = \mathcal{H}_{\lambda},
\end{aligned}
\end{equation}
where 
\begin{equation*}
    \mathcal{H}_{\lambda} = \int_{\mathbb{T}^{d - 1}}(\mathcal{H}_{\lambda}^1 + \mathcal{H}_{\lambda}^2) \,{\rm d}\widehat{x}_1.
\end{equation*}

\subsection{Some useful estimates}

In this section we collect some key estimates that will be frequently used in our analysis.
Although each result was originally established in Euclidean case from the literature, their generalizations to the periodic case are straightforward.

We begin by recalling the classical commutator estimates. We refer the readers to \cite[Lemma 2.97]{BCD} for the proof.

\begin{theorem}\label{Comu}
    For functions $f, g$ satisfying $\nabla f \in L^p(\mathbb{T}^d)$ and $g \in L^q(\mathbb{T}^d)$, there holds 
    \begin{equation}\label{gh}
        \Vert [P_{\lambda}, f]g\Vert_{L^r} \lesssim \lambda^{-1}\Vert \nabla f\Vert_{L^p}\Vert g \Vert_{L^q},
    \end{equation}
    where $[P_{\lambda}, f]g := P_{\lambda}(fg) - fP_{\lambda}g$ and $p, q, r \in [1, \infty]$ satisfy $1/r = 1/p + 1/q$.
\end{theorem}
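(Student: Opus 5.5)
The plan is the standard kernel argument from \cite[Lemma 2.97]{BCD}, transplanted to $\mathbb{T}^d$. Write $P_{\lambda}h = K_{\lambda} * h$, where $K_{\lambda}(z) = \sum_{\xi \in A_{\lambda}\setminus A_{\lambda/2}} \e^{\sqrt{-1}\xi\cdot z}$ (up to normalisation) is a $2\pi$-periodic kernel. Then
\begin{equation*}
    [P_{\lambda}, f]g(x) = \int_{\mathbb{T}^d} K_{\lambda}(x - y)\bigl(f(y) - f(x)\bigr)g(y)\,{\rm d}y .
\end{equation*}
Identify $\mathbb{T}^d$ with $[-\pi,\pi)^d$, choose the representative $z = x-y$ in this fundamental cell, and use
\begin{equation*}
    f(x - z) - f(x) = -\int_0^1 z\cdot\nabla f(x - sz)\,{\rm d}s .
\end{equation*}
By Minkowski's integral inequality,
\begin{equation*}
    \Vert [P_{\lambda}, f]g\Vert_{L^r} \le \int_0^1\!\!\int_{[-\pi,\pi)^d} |z|\,|K_{\lambda}(z)|\,\Vert \nabla f(\cdot - sz)\,g(\cdot - z)\Vert_{L^r}\,{\rm d}z\,{\rm d}s .
\end{equation*}
Hölder's inequality with $1/r = 1/p + 1/q$ and translation invariance of $L^p(\mathbb{T}^d)$ bound the inner norm by $\Vert\nabla f\Vert_{L^p}\Vert g\Vert_{L^q}$. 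The whole estimate \eqref{gh}, for all $p,q,r\in[1,\infty]$, therefore reduces to the single kernel moment bound
\begin{equation*}
    \int_{[-\pi,\pi)^d} |z|\,|K_{\lambda}(z)|\,{\rm d}z \lesssim \lambda^{-1}.
\end{equation*}
One detail must be handled first: the mean-value identity has to be applied along the straight segment inside the fundamental cell, so that the periodic translations $x - sz$ are well defined.

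The key step, and the one I expect to be the real obstacle, is this moment bound for the operator $P_{\lambda}$ as defined in Section~\ref{LPP}. That definition uses the sharp cube cutoff $A_{\lambda}\setminus A_{\lambda/2}$, so $K_{\lambda}$ is a difference of products of one-dimensional Dirichlet kernels $D_{\lambda}(z_i) = \sin((\lambda+\tfrac12)z_i)/\sin(z_i/2)$.

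For such kernels one has $|z_i|\,|D_{\lambda}(z_i)| \sim |\sin((\lambda+\tfrac12)z_i)|$. This is of size $1$ on a set of measure $\sim 1$, so the first moment is $O(1)$ (times logarithmic factors from the other directions) rather than $O(\lambda^{-1})$. The difficulty is genuine and not an artifact of the method. Take $f = \e^{\sqrt{-1}x_1}$. Then $[P_{\lambda},f]g = f\,(\widetilde{P}_{\lambda} - P_{\lambda})g$, where $\widetilde{P}_{\lambda}$ is the projection onto the cube region shifted by $e_1$. The difference is a projection onto the symmetric difference of the two regions, so it has $L^2$ norm $1$, while the right side of \eqref{gh} is $\lambda^{-1}\Vert g\Vert_{L^2}$.

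So the argument above closes exactly when $K_{\lambda}$ has the Schwartz-type decay $|K_{\lambda}(z)| \lesssim \lambda^{d}(1+\lambda|z|)^{-N}$ on the fundamental cell. For the literal sharp-cutoff $P_{\lambda}$, \eqref{gh} can only hold after $P_{\lambda}$ is read as a smooth dyadic multiplier. That is consistent with the paper's remark after \eqref{Bony} that finite sums $\sum_{\lambda'\sim\lambda}P_{\lambda'}$ are immaterial, and with its citation of \cite{BCD}, where $P_{\lambda}$ is smooth.

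In carrying out the proof I would:
\begin{enumerate}
    \item state this interpretation explicitly, since without it the statement is false;
    \item verify the moment bound by Poisson summation, periodising the Euclidean kernel $\lambda^{d}\check{\varphi}(\lambda z)$, whose first moment is $\lambda^{-1}\Vert z\check{\varphi}\Vert_{L^1(\mathbb{R}^d)}$;
    \item conclude \eqref{gh} from the reduction in the first paragraph.
\end{enumerate}
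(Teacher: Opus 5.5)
Your proposal is correct and is essentially the proof the paper relies on: the paper gives no argument of its own but cites \cite[Lemma 2.97]{BCD}, whose proof is exactly your kernel representation of the commutator followed by first-order Taylor expansion, Minkowski and H\"older, reducing everything to the moment bound $\int |z|\,|K_{\lambda}(z)|\,{\rm d}z \lesssim \lambda^{-1}$. Your caveat is also correct and should be stated explicitly: with the sharp cube cutoff of Section~\ref{LPP} the estimate fails (for instance $f = \e^{\sqrt{-1}x_1}$, $g = \e^{\sqrt{-1}\lambda x_1}$ gives $|[P_{\lambda}, f]g| \equiv 1$, while the right-hand side is $O(\lambda^{-1})$), so $P_{\lambda}$ must be read as a smooth dyadic multiplier as in \cite{BCD}, and your Poisson-summation check of the moment bound then completes the proof.
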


As an immediate variant of Theorem \ref{Comu}, we have

\begin{lemma}\label{Comuu}
        For smooth functions $f, g$ and $h$ there holds 
    \begin{equation}\label{df}
        \sup_{a, b}\Vert h[P_{\lambda}, f^a]g^b\Vert_{L^p} \lesssim \lambda^{-1}\sup_{a, b}\Vert h|\nabla_xf^a|g^b\Vert_{L^p}, \qquad p \in [1, \infty],
    \end{equation}
    where $f^a(x) := f(x - a)$.
    \begin{proof}
        See \cite[Lemma 2.2]{LSZ}.
    \end{proof}
\end{lemma}

To formulate the estiamtes above in a unified framework, we introduce the following translation-invariant multilinear forms.

\begin{definition}[$L$-notation]
    Given scalar functions $f_i\ (i = 1, \cdots, n)$ we denote by $L(f_1, \cdots, f_n)$ any multilinear expression of the form 
    \begin{equation*}
        \begin{aligned}
            L(f_1, \cdots, f_n)(t, x) := \int K(y_1, \cdots, y_n)f_1^{y_1}(t, x) \cdots f_n^{y_n}(t, x) \,{\rm d}y_1 \cdots {\rm d}y_n,
        \end{aligned}
    \end{equation*}
    where $f_j^{y_j}(t, x) := \phi(t, x - y_j)$ and $K$ is an integral kernel (the absolute value of $K$ may change from line to line).
\end{definition}

This $L$-notation can be also extended to the case when $\phi_i$ take values as vectors or matrices by taking $K$ as an appropriate tensor.
For more details on $L$-notation we refer to \cite{T2}, and here, we only list some basic properties, which are sufficient for our use.

The $L$-notation is well-interacted with Littlewood-Paley operators. For instance, 
\begin{gather}
    \label{x}L(\nabla_xP_{\leq \lambda}f_1, f_2, \cdots, f_n) = \lambda L(f_1, f_2, \cdots, f_n), \\ 
    \label{y}\nabla_xP_{\leq \lambda}L(f_1, f_2, \cdots, f_n) = \lambda L(f_1, f_2, \cdots, f_n),
\end{gather}
and for homogenous operators, there holds
\begin{equation}
    \label{z1}L(P_{\lambda}f_1, f_2, \cdots, f_n) = \lambda^{-1}L(\nabla_xP_{\lambda}f_1, f_2, \cdots, f_n).
\end{equation}
Similarly for $P_{\lambda, e_i}$ (see Section \ref{1.3} for the definition). It also proves highly convenient to employ the $L$-notation in commutator representations.
Specifically, we have 

\begin{lemma}\label{BONYB}
    There holds 
    \begin{equation}\label{Leibiniz}
        P_{\lambda}(fg) = fP_{\lambda}g + \lambda^{-1}L(\nabla_xf, g).
    \end{equation}
    Moreover,
    \begin{equation}\label{BONY}
        [P_{\lambda}, f]g = \lambda^{-1}L(\nabla_xP_{\ll \lambda}f, P_{\lambda}g) + L(P_{\lambda}f, P_{\ll \lambda}g) + \sum_{\lambda_1 \sim \lambda_2 \gtrsim \lambda}P_{\lambda}L(P_{\lambda_1}f, P_{\lambda_2}g). 
    \end{equation}
    \begin{proof}
        The proof for \eqref{Leibiniz} (also known as Leibniz rule for $P_{\lambda}$) involves only fundamental theorem of calculus, as well as the definition of $P_{\lambda}$, and we refer to \cite[Lemma 2]{T2} for more details. 
        For \eqref{BONY}, by \eqref{Leibiniz}, \eqref{x}-\eqref{z1} and Bony decomposition \eqref{Bony} we have 
        \begin{equation*}
            \begin{aligned}
            \relax[P_{\lambda}, f]g &= \lambda^{-1}L(\nabla_xf, g) \\
            &= \lambda^{-1}L(\nabla_xP_{\ll \lambda}f, P_{\lambda}g) + \lambda^{-1}L(\nabla_xP_{\lambda}f, P_{\ll \lambda}g) + \lambda^{-1}\sum_{\lambda_1 \sim \lambda_2 \gtrsim \lambda}P_{\lambda}L(\nabla_xP_{\lambda_1}f, P_{\lambda_2}g) \\
            &= \lambda^{-1}L(\nabla_xP_{\ll \lambda}f, P_{\lambda}g) + L(P_{\lambda}f, P_{\ll \lambda}g) + \sum_{\lambda_1 \sim \lambda_2 \gtrsim \lambda}P_{\lambda}L(P_{\lambda_1}f, P_{\lambda_2}g). 
            \end{aligned}
        \end{equation*}
    \end{proof}
\end{lemma}

Finally, we list some estimates concerning the action of smooth functions on Besov spaces. The proofs can be found in \cite[Lemma 2.7]{BCD} and \cite[Lemma A.13]{KV}.

\begin{theorem}[Paralinearization]\label{para}
    Let $F$ be a smooth function vanishing at $0$, $\sigma > 0$ and $p, q \in [1, \infty]$.
    If $u \in L^{\infty}(\mathbb{T}^d) \cap B^\sigma_{p, q}(\mathbb{T}^d)$, then so does $F(u)$, and we have 
    \begin{equation*}
        \Vert F(u)\Vert_{B^\sigma_{p, q}} \lesssim_{s, F', \Vert u\Vert_{L^{\infty}}}\Vert u\Vert_{B^\sigma_{p, q}}. 
    \end{equation*}
\end{theorem}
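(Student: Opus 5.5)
The plan is to follow the standard Littlewood--Paley proof of Moser-type composition estimates, adapted to $\mathbb{T}^d$ by way of Proposition \ref{prop21}. First, I would reduce to the telescoping decomposition
\begin{equation*}
F(u) = F(P_{<1}u) + \sum_{\mu \geq 1}\big(F(P_{\leq \mu}u) - F(P_{\leq \mu/2}u)\big) = F(P_{<1}u) + \sum_{\mu \geq 1} m_{\mu}\, P_{\mu}u,
\end{equation*}
where $m_{\mu} := \int_0^1 F'(P_{\leq \mu/2}u + \tau P_{\mu}u)\,{\rm d}\tau$. The series converges in $L^p$ (or weak-$*$ when $p=\infty$) because $\sigma>0$. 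Since $F(0)=0$ and $\Vert P_{\leq\mu}u\Vert_{L^\infty}\lesssim \Vert u\Vert_{L^\infty}$ (here I use the Fejér/de la Vallée Poussin-type kernel boundedness underlying Proposition \ref{prop21}, valid up to $p=\infty$), the low-frequency term obeys $\Vert F(P_{<1}u)\Vert_{L^p}\lesssim_{F',\Vert u\Vert_{L^\infty}}\Vert P_{<1}u\Vert_{L^p}$. Every frequency projection of it is then bounded by the same quantity times a factor decaying in $\lambda$; this decay follows from the smoothness of $F(P_{<1}u)$, whose derivatives are controlled through Bernstein.

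The core of the argument is an estimate for the coefficients $m_\mu$. Because the arguments of $F'$ and its derivatives lie in a ball of radius $C\Vert u\Vert_{L^\infty}$, and because Bernstein gives $\Vert \nabla^k P_{\leq\mu}u\Vert_{L^\infty}\lesssim \mu^k\Vert u\Vert_{L^\infty}$, the Faà di Bruno formula yields
\begin{equation*}
\Vert \nabla^k m_{\mu}\Vert_{L^\infty}\lesssim_{k,F,\Vert u\Vert_{L^\infty}} \mu^k
\end{equation*}
for every $k\geq 0$. I would then fix an integer $k>\sigma$ and estimate $P_\lambda(m_\mu P_\mu u)$ in two regimes. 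For $\mu\gtrsim\lambda$, the boundedness of $P_\lambda$ gives $\Vert P_\lambda(m_\mu P_\mu u)\Vert_{L^p}\lesssim \Vert P_\mu u\Vert_{L^p}$. For $\mu\ll\lambda$, I would write $P_\lambda = \lambda^{-k}\,\widetilde P_\lambda\nabla^k$ with $\widetilde P_\lambda$ a bounded multiplier; by Bernstein this is legitimate on $\mathbb{T}^d$ for $\lambda\geq 2$. Distributing the derivatives by Leibniz and using the bound on $\nabla^k m_\mu$ together with Bernstein on $P_\mu u$ gives $\Vert P_\lambda(m_\mu P_\mu u)\Vert_{L^p}\lesssim (\mu/\lambda)^k\Vert P_\mu u\Vert_{L^p}$.

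Combining the two regimes,
\begin{equation*}
\lambda^\sigma \Vert P_\lambda F(u)\Vert_{L^p}\lesssim \sum_{\mu\gtrsim\lambda}(\lambda/\mu)^{\sigma}\,\mu^\sigma\Vert P_\mu u\Vert_{L^p} + \sum_{\mu\ll\lambda}(\mu/\lambda)^{k-\sigma}\,\mu^\sigma\Vert P_\mu u\Vert_{L^p}
\end{equation*}
plus the low-frequency contribution. Since $\sigma>0$ and $k-\sigma>0$, the kernel $\min\{(\lambda/\mu)^\sigma,(\mu/\lambda)^{k-\sigma}\}$ is summable on the dyadic lattice, so Young's inequality on $\ell^q$ gives $\Vert F(u)\Vert_{B^\sigma_{p,q}}\lesssim\Vert u\Vert_{B^\sigma_{p,q}}$. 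Membership $F(u)\in L^\infty$ is immediate from $|F(u)|\leq \sup_{|s|\le\Vert u\Vert_{L^\infty}}|F'(s)|\,|u|$.

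I expect the main obstacle to be the endpoint $p=1$ or $p=\infty$ together with the square (cube-type) projections $A_\lambda$ used on $\mathbb{T}^d$, because sharp cube cutoffs do not give uniformly $L^1$-bounded kernels. Proposition \ref{prop21} asserts boundedness only for $1<p\leq\infty$, and the Bernstein step $P_\lambda=\lambda^{-k}\widetilde P_\lambda\nabla^k$ needs a bounded multiplier. To handle this, I would first try to replace $P_\lambda$ in the argument by smooth Littlewood--Paley multipliers, which generate an equivalent Besov norm for all $p\in[1,\infty]$. Failing that, I would simply restrict the argument to the range where Proposition \ref{prop21} applies, which covers all uses in the paper ($p=2$ and $p=\infty$).
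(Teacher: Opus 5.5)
Your argument is Meyer's first-linearization proof, which is the proof in the reference the paper cites for this theorem. With a \emph{smooth} dyadic partition of unity every step is correct for all $p,q\in[1,\infty]$: the telescoping identity, the bound $\Vert\nabla^k m_\mu\Vert_{L^\infty}\lesssim\mu^k$ via Fa\`a di Bruno, the two frequency regimes with an integer $k>\sigma$, and Young's inequality on the dyadic lattice.

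What does not hold up is the link to the paper's operators. The paper's $P_{\leq\mu}$ is the sharp cube partial sum over $A_\mu$. Its kernel is a tensor product of one-dimensional Dirichlet kernels, with $L^1$ norm comparable to $(\log\mu)^d$. It is not a Fej\'er or de la Vall\'ee Poussin mean, and it is not uniformly bounded on $L^\infty$, so the $p=\infty$ case of Proposition \ref{prop21} cannot be relied on. Consequently both $\Vert P_{\leq\mu}u\Vert_{L^\infty}\lesssim\Vert u\Vert_{L^\infty}$ and the $L^\infty$ Bernstein bound for $\nabla^kP_{\leq\mu}u$ fail. These are exactly the inputs of your estimate on $\nabla^k m_\mu$, so the argument cannot be run with the paper's projections.

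Switching to smooth multipliers, as you suggest, is the right repair. However, your claim that they define an equivalent $B^\sigma_{p,q}$ norm for every $p\in[1,\infty]$ is false. Sharp dyadic blocks are uniformly bounded on $L^p(\mathbb{T}^d)$ only for $1<p<\infty$, by M. Riesz's theorem applied in each variable. Only in that range do the sharp and smooth Besov norms coincide. Already on functions of one variable, the sharp block $P_{2^j}$ has $L^\infty\to L^\infty$ norm $\gtrsim j$, even restricted to functions whose spectrum lies in the matching smooth annulus.

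So your proof gives the theorem for smooth-cutoff Besov norms for all $p,q$, but for the paper's sharp-cutoff norms only when $1<p<\infty$ (any $q$). That covers every use in the paper, which is always $p=q=2$. It does not cover the endpoints $p\in\{1,\infty\}$ as literally stated. Your fallback of reaching $p=\infty$ through Proposition \ref{prop21} relies on the incorrect part of that proposition. The same caveat applies to the paper itself, which simply imports the Euclidean smooth-cutoff result.
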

\begin{theorem}[Nonlinear Bernstein]\label{Nbern}
    Let $G$ be a Lipschitz continuous function. Then for any $1 \leq p \leq +\infty$ and $\nabla u \in L^p(\mathbb{T}^d)$, 
    \begin{equation*}
        \Vert P_{\lambda}G(u)\Vert_{L^p} \lesssim \lambda^{-1}\Vert \nabla u\Vert_{L^p}.
    \end{equation*}
\end{theorem}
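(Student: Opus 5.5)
The estimate $\Vert P_{\lambda}G(u)\Vert_{L^p} \lesssim \lambda^{-1}\Vert \nabla u\Vert_{L^p}$ will come from writing $P_{\lambda}$ as a difference operator via its convolution kernel, and then bounding differences of $G(u)$ pointwise by differences of $u$ using the Lipschitz property of $G$.

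\emph{Step 1: kernel representation and vanishing mean.} For $\lambda \geq 2$ the multiplier of $P_{\lambda}$ vanishes at $\xi = 0$. Hence $P_{\lambda}f = K_{\lambda} * f$ on $\mathbb{T}^d$ with
\begin{equation*}
    \int_{\mathbb{T}^d} K_{\lambda}(y)\,{\rm d}y = 0.
\end{equation*}
Using this, we rewrite
\begin{equation*}
    P_{\lambda}G(u)(x) = \int_{\mathbb{T}^d} K_{\lambda}(y)\bigl(G(u(x-y)) - G(u(x))\bigr)\,{\rm d}y .
\end{equation*}
The Lipschitz bound then gives the pointwise estimate
\begin{equation*}
    |P_{\lambda}G(u)(x)| \leq {\rm Lip}(G)\int |K_{\lambda}(y)|\,|u(x-y) - u(x)|\,{\rm d}y .
\end{equation*}

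\emph{Step 2: fundamental theorem of calculus and Minkowski.} Write
\begin{equation*}
    u(x-y) - u(x) = -\int_0^1 y\cdot\nabla u(x-sy)\,{\rm d}s ,
\end{equation*}
using the shortest representative of $y$ in the fundamental cell. Minkowski's integral inequality and translation invariance of the $L^p(\mathbb{T}^d)$ norm give
\begin{equation*}
    \Vert P_{\lambda}G(u)\Vert_{L^p} \lesssim {\rm Lip}(G)\,\Vert\nabla u\Vert_{L^p}\int_{\mathbb{T}^d}|y|\,|K_{\lambda}(y)|\,{\rm d}y .
\end{equation*}

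\emph{Step 3: the first-moment bound (main obstacle).} It remains to show
\begin{equation*}
    \int_{\mathbb{T}^d} |y|\,|K_{\lambda}(y)|\,{\rm d}y \lesssim \lambda^{-1}.
\end{equation*}
The paper's $P_{\lambda}$ uses sharp cube cutoffs $A_{\lambda}\setminus A_{\lambda/2}$. Its kernel is a difference of products of one-dimensional Dirichlet kernels, which are not $L^1$-uniformly bounded; the sharp cutoff costs logarithms. I would therefore work, as is implicit in Proposition \ref{prop21} and \cite{DHWX}, with a smooth Littlewood--Paley multiplier $\varphi(\xi/\lambda)$. Its kernel is the periodization of $\lambda^d\check\varphi(\lambda\,\cdot)$ on $\mathbb{R}^d$, which is Schwartz and mean-zero. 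For this kernel, $\int |y|\,|K_{\lambda}(y)|\,{\rm d}y \lesssim \lambda^{-1}$ follows by scaling plus the rapid decay, which controls the periodization tails. The sharp-cutoff operator is then recovered as a bounded composition with fattened smooth projections, using the same convention under which Proposition \ref{prop21} is stated.

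\emph{The case $\lambda = 1$.} The low-frequency piece $P_{<1}$ is the mean, and the estimate is not claimed for it, since $\Vert\nabla u\Vert$ cannot control constants. So the statement is understood for $\lambda \geq 2$, or with $G(u)$ replaced by $G(u)$ minus its mean.
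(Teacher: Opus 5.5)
Your argument is the standard proof of this inequality. The paper does not prove it; it cites \cite{BCD} and \cite{KV}, where exactly this route is taken: a mean-zero kernel, the Lipschitz bound on $G(u(x-y))-G(u(x))$, the fundamental theorem of calculus plus Minkowski, and the first-moment bound $\int|y|\,|K_\lambda(y)|\,{\rm d}y\lesssim\lambda^{-1}$ for the periodized Schwartz kernel. Steps 1--3 are correct for a smooth projection. Your remark that the block containing the zero mode must be excluded is also correct.

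The one claim to drop is the last sentence of Step 3. Writing the sharp projection as $P_\lambda\widetilde{P}_\lambda$, with a fattened smooth $\widetilde{P}_\lambda$, transfers the bound only for $1<p<\infty$. In that range the cube partial sums are uniformly bounded on $L^p(\mathbb{T}^d)$, by M.~Riesz's theorem in each variable. At $p=1$ and $p=\infty$ the sharp $P_\lambda$ is not uniformly bounded, and the inequality itself fails for it. For $d=1$ and $G=\mathrm{id}$, duality gives
\begin{equation*}
    \sup_{\Vert u'\Vert_{L^\infty}\leq 1}\Vert P_\lambda u\Vert_{L^\infty} \sim \Bigl\Vert \sum_{\lambda/2<k\leq\lambda}\frac{\sin(kx)}{k}\Bigr\Vert_{L^1(\mathbb{T})} \sim \lambda^{-1}\log\lambda .
\end{equation*}
So appealing to ``the convention of Proposition \ref{prop21}'' does not help, since its $p=\infty$ assertion is also false for sharp cutoffs. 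At the endpoints the theorem must be read with smooth Littlewood--Paley projections. This is the case the paper actually needs, because it applies the estimate in $L_x^\infty$ in Appendix \ref{AA}.
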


\subsection{Div-curl lemma}

The following div-curl lemma plays a crucial role in the proof for the $L_{t, x}^2$ bilinear estimate \eqref{16} and $L^6_tL_x^{\infty}$ estimate \eqref{bound2}; see Section \ref{L2L2} and \ref{Linfty}.

\begin{lemma}\label{divcur}
    Suppose that 
\begin{equation*}
\begin{cases}
        \partial_tf^{11} + \partial_xf^{12} = G^1, \\ 
    \partial_tf^{21} - \partial_xf^{22} = G^2,
\end{cases}
\end{equation*}
where $f^{ij}, G^k$ are periodic in variable $x$ with period $1$. Then 
\begin{equation}\label{div1}
    \begin{aligned}
        &\int_0^t\int_0^1(f^{11}f^{22} + f^{12}f^{21}) \,{\rm d}x{\rm d}t \\
        \leq &\,4\sup_t\Vert f^{11}(t)\Vert_{L_x^1}\Vert f^{21}(t)\Vert_{L_x^1} \\ 
        &+ \left|\int_0^t\int_0^1f^{21}\left(\int_0^1\int_y^xG^1 \,{\rm d}z{\rm d}y\right) \,{\rm d}x{\rm d}t\right| \\
        &+ \left|\int_0^t\int_0^1G^{2}\left(\int_0^1\int_y^xf^{11} \,{\rm d}z{\rm d}y\right) \,{\rm d}x{\rm d}t\right| \\ 
        &+ \left|\int_0^t\left(\int_0^1G^1 \,{\rm d}x\right)\left(\int_0^1\left(x - \frac{1}{2}\right)f^{21} \,{\rm d}x\right) \,{\rm d}t\right| \\
        &+ \left|\int_0^t\left(\int_0^1f^{11} \,{\rm d}x\right)\left(\int_0^1\left(x - \frac{1}{2}\right)G^2 \,{\rm d}x\right) \,{\rm d}t\right| \\
        &+ \int_0^t\left(\left(\int_0^1f^{11} \,{\rm d}x\right)\left(\int_0^1f^{22} \,{\rm d}x\right) + \left(\int_0^1f^{12} \,{\rm d}x\right)\left(\int_0^1f^{21} \,{\rm d}x\right)\right) \,{\rm d}t
    \end{aligned}
\end{equation}
provided the right-hand side is bounded.
In particular, 
\begin{equation}\label{div2}
    \begin{aligned}
                &\int_0^t\int_{\mathbb{T}}(f^{11}f^{22} + f^{12}f^{21}) \,{\rm d}x{\rm d}t \\
        \lesssim &\left(\Vert f^{11}(0)\Vert_{L_x^1} + \Vert f^{11}\Vert_{L_t^{\infty}L_x^1} + \Vert G^1\Vert_{L_{t, x}^1}\right)\left(\Vert f^{21}(0)\Vert_{L_x^1} + \Vert f^{21}\Vert_{L_t^{\infty}L_x^1} + \Vert G^2\Vert_{L_{t, x}^1}\right) \\ 
        &+ \int_0^t\left(\left(\int_{\mathbb{T}}f^{11} \,{\rm d}x\right)\left(\int_{\mathbb{T}}f^{22} \,{\rm d}x\right) + \left(\int_{\mathbb{T}}f^{12} \,{\rm d}x\right)\left(\int_{\mathbb{T}}f^{21} \,{\rm d}x\right)\right) \,{\rm d}t,
    \end{aligned}
\end{equation}
where 
\begin{equation*}
    \int_{\mathbb{T}^d} f \,{\rm d}x = \int_{[0, 1]^d}f \,{\rm d}x = \int_{[x_0, x_0 + 1]^d}f \,{\rm d}x, \qquad x_0 \in \mathbb{R}.
\end{equation*}
    \begin{proof}
        The calculations in \cite{WZ} imply the identity 
        \begin{equation*}
            \begin{aligned}
                &\int_0^t\int_0^1(f^{11}f^{22} + f^{12}f^{21}) \,{\rm d}x{\rm d}t \\ 
                = &\left.\left(-\int_0^1f^{21}\left(\int_0^1\int_y^xf^{11} \,{\rm d}z{\rm d}y\right) \,{\rm d}x + \left(\int_0^1f^{11} \,{\rm d}x\right)\left(\int_0^1\left(x - \frac{1}{2}\right)f^{21} \,{\rm d}x\right)\right)\right|_{0}^t \\ 
                &+ \int_0^t\int_0^1f^{21}\left(\int_0^1\int_y^xG^1 \,{\rm d}z{\rm d}y\right) \,{\rm d}x{\rm d}t \\
                &+ \int_0^t\int_0^1G^{2}\left(\int_0^1\int_y^xf^{11} \,{\rm d}z{\rm d}y\right) \,{\rm d}x{\rm d}t \\ 
                &- \int_0^t\left(\int_0^1G^1 \,{\rm d}x\right)\left(\int_0^1\left(x - \frac{1}{2}\right)f^{21} \,{\rm d}x\right) \,{\rm d}t \\
                &- \int_0^t\left(\int_0^1f^{11} \,{\rm d}x\right)\left(\int_0^1\left(x - \frac{1}{2}\right)G^2 \,{\rm d}x\right) \,{\rm d}t \\
                &+ \int_0^t\left(\left(\int_0^1f^{11} \,{\rm d}x\right)\left(\int_0^1f^{22} \,{\rm d}x\right) + \left(\int_0^1f^{12} \,{\rm d}x\right)\left(\int_0^1f^{21} \,{\rm d}x\right)\right) \,{\rm d}t,
            \end{aligned}
        \end{equation*}
        which yields \eqref{div1}, and it is clear that \eqref{div2} follows from \eqref{div1} and H\"older inequality.
    \end{proof}
\end{lemma}

For a Euclidean version of Lemma \ref{divcur} we refer to \cite[Lemma 2.1]{WZ1}. It is noteworthy that this div-curl lemma and the bilinear estimate method derived from it, have broad applicability in low-regularity problem for dispersive and wave equations.
We refer interested readers to \cite{HTZ,ZZ,WZ2,LSZ,HLPZ} for related works.

\section{Proof of Theorem \ref{LWP2}}\label{3A}

\subsection{Derivation of mSMF}\label{MSMF}

Recall the definition of $(\psi_{\alpha}, A_{\beta})$ in Section \ref{1.3}.
In this section we derive the equation satisfied by $\bm{\psi} = (\psi_1, \cdots, \psi_d)$, namely, the mSMF.

Combining \eqref{C} with \eqref{Coulomb}, we obtain 
\begin{equation}\label{Ak}
    \Delta_x\bm{A} = -\partial_{\ell}(\Im(\bm{\psi}\overline{\psi_{\ell}})).
\end{equation}
Moreover, by \eqref{LL} 
\begin{equation*}
    \psi_0 = \langle u \times \Delta_x u, e_1\rangle + \sqrt{-1}\langle u \times \Delta_x u, e_2\rangle,
\end{equation*}
and using \eqref{A}-\eqref{B}, we have $(\alpha = 0, 1, \cdots, d$)
\begin{equation*}
\begin{aligned}
    \partial_{\alpha}^2u = (\partial_{\alpha}\Re(\psi_{\alpha}) - A_{\alpha}\Im(\psi_{\alpha}))e_1 + (\partial_{\alpha}\Im(\psi_{\alpha}) + A_{\alpha}\Re(\psi_{\alpha}))e_2 - |\psi_{\alpha}|^2u.
\end{aligned}
\end{equation*}
Therefore,
\begin{align}\label{D}
    \psi_0 = -(\partial_{\ell}\Im(\psi_{\ell}) + A_{\ell}\Re(\psi_{\ell})) + \sqrt{-1}(\partial_{\ell}\Re(\psi_{\ell}) - A_{\ell}\Im(\psi_{\ell})) = \sqrt{-1}D_{\ell}^{\bm{A}}\psi_{\ell},
\end{align}
and it follows that 
\begin{equation}\label{abss}
\begin{aligned}
    D_0^{\bm{A}}\psi_k = D_k^{\bm{A}}\psi_0 = \sqrt{-1}D_k^{\bm{A}}D_{\ell}^{\bm{A}}\psi_{\ell} &= \sqrt{-1}D_{\ell}^{\bm{A}}D_k^{\bm{A}}\psi_{\ell} - \Im(\psi_k\overline{\psi_{\ell}})\psi_{\ell} \\ 
    &= \sqrt{-1}D_{\ell}^{\bm{A}}D_{\ell}^{\bm{A}}\psi_k - \Im(\psi_k\overline{\psi_{\ell}})\psi_{\ell}.
\end{aligned}
\end{equation}
Here we use the identity 
\begin{equation*}
    [D_{\alpha}^{\bm{A}}, D_{\beta}^{\bm{A}}] = \sqrt{-1}\Im(\psi_{\alpha}\overline{\psi_{\beta}}), \qquad \alpha, \beta = 0, 1, \cdots, d
\end{equation*}
and \eqref{Coulomb}. 

Note \eqref{abss} is equivalent to 
\begin{equation}\label{psii}
\begin{aligned}
    (\sqrt{-1}\partial_t + \Delta_x)\psi_k = -2\sqrt{-1}A_{\ell}\partial_{\ell}\psi_k + \left(A_0 + \sum_{\ell = 1}^dA_{\ell}^2\right)\psi_k - \sqrt{-1}\psi_{\ell}\Im(\psi_k\overline{\psi_{\ell}}),
\end{aligned}
\end{equation}
where $k = 1, \cdots, d$. Namely,
\begin{equation*}
    (\sqrt{-1}\partial_t + \Delta_x)\psi_k = -2\sqrt{-1}\bm{A} \cdot \nabla_x\psi_k + (A_0 + |\bm{A}|^2)\psi_k - \sqrt{-1}\bm{\psi} \cdot \Im(\psi_k\overline{\bm{\psi}}).
\end{equation*}

For the term $A_0$, by \eqref{C} and \eqref{Coulomb}
\begin{equation*}
    \Delta_x A_0 = \partial_{\ell}(\partial_0A_{\ell} + \Im(\psi_{\ell}\overline{\psi_0})) = \partial_{\ell}\Im(\psi_{\ell}\overline{\psi_0}).
\end{equation*}
It is easy to check that  
\begin{equation}\label{in}
    \overline{\psi_{\ell}}D_m^{\bm{A}}\psi_m = \partial_m(\overline{\psi_{\ell}}\psi_m) - \psi_m\overline{D_m^{\bm{A}}\psi_{\ell}}, \qquad m, \ell = 1, \cdots, d.
\end{equation}
Using \eqref{in}, from \eqref{D} and \eqref{C1} we have 
\begin{equation*}
\begin{aligned}
    \Im(\psi_{\ell}\overline{\psi_0}) = -\Re(\overline{\psi_{\ell}}D_m^{\bm{A}}\psi_m) &= -\partial_m\Re(\overline{\psi_{\ell}}\psi_m) + \Re(\psi_m\overline{D_m^{\bm{A}}\psi_{\ell}}) \\ 
    &= -\partial_m\Re(\overline{\psi_{\ell}}\psi_m) + \Re(\psi_m\overline{D_{\ell}^{\bm{A}}\psi_m}) \\ 
    &= -\partial_m\Re(\overline{\psi_{\ell}}\psi_m) + \frac{1}{2}\partial_{\ell}\sum_{m = 1}^d|\psi_m|^2 \\ 
    &= -\nabla_x \cdot \Re(\overline{\psi_{\ell}}\bm{\psi}) + \frac{1}{2}\partial_{\ell}|\bm{\psi}|^2.
\end{aligned}
\end{equation*}
Therefore, 
\begin{equation}\label{A0}
    \Delta_x A_0 = -\partial_m\partial_{\ell}\Re(\overline{\psi_{\ell}}\psi_m) + \frac{1}{2}\Delta_x|\bm{\psi}|^2.
\end{equation}

It remains to express $\bm{A}$ and $A_0$ in terms of $\psi_k$ to obtain a closed equation for $\psi_k$. Firstly, since 
\begin{equation*}
    \frac{1}{|\mathbb{T}^d|}\int_{\mathbb{T}^d}A_0(t, x) \,{\rm d}x = 0,
\end{equation*}
it follows from \eqref{A0} that 
\begin{equation*}
    A_0(t, x) = \mathcal{R}_m\mathcal{R}_{\ell}\Re(\overline{\psi_{\ell}}\psi_m) + \frac{1}{2}|\bm{\psi}|^2,
\end{equation*}
where $\mathcal{R}_m$ is the Riesz-type operator defined by Fourier multiplier $\xi \mapsto \sqrt{-1}\xi_m/|\xi|$.

For $\bm{A}$,   
\begin{equation*}
\begin{aligned}
    \bm{A}(t, x) &= \left(\bm{A}(t, x) - \frac{1}{|\mathbb{T}^d|}\int_{\mathbb{T}^d}\bm{A}(t, x) \,{\rm d}x\right) + \frac{1}{|\mathbb{T}^d|}\int_{\mathbb{T}^d}\bm{A}(t, x) \,{\rm d}x \\
    &=: \widetilde{\bm{A}}(t, x) + \bm{C}(t).
\end{aligned}
\end{equation*}
Note 
\begin{equation*}
    \frac{1}{|\mathbb{T}^d|}\int_{\mathbb{T}^d}\widetilde{\bm{A}}(t, x) \,{\rm d}x = 0
\end{equation*}
and $\Delta_x\widetilde{\bm{A}} = \Delta_x\bm{A}$. It follows from \eqref{Ak}
\begin{equation*}
    \widetilde{\bm{A}}(t, x) = \nabla^{-1}_x\sum_{\ell = 1}^d\mathcal{R}_{\ell}(\Im(\bm{\psi}\overline{\psi_{\ell}})),
\end{equation*}
where $\nabla_x^{-1}$ is a Fourier multiplier with symbol $|\xi|^{-1}$.
For $\bm{C} = (C_1, \cdots, C_d) \in \mathbb{R}^d$, by \eqref{C} we have 
\begin{equation*}
\begin{aligned}
    |\mathbb{T}^d|\frac{{\rm d}C_k(t)}{{\rm d}t} = \int_{\mathbb{T}^d}\partial_0A_k \,{\rm d}x &= \int_{\mathbb{T}^d}(\partial_kA_0 + \Im(\psi_0\overline{\psi_k})) \,{\rm d}x \\ 
    &= \int_{\mathbb{T}^d}\Im(\psi_0\overline{\psi_k}) \,{\rm d}x \\ 
    &= \int_{\mathbb{T}^d}\left(-\nabla_x \cdot \Re(\overline{\psi_k}\bm{\psi}) + \frac{1}{2}\partial_k|\bm{\psi}|^2\right) \,{\rm d}x \\ 
    &= 0,
\end{aligned}
\end{equation*}
and hence 
\begin{equation*}
    \bm{C}(t) = \bm{C}(0) = \frac{1}{|\mathbb{T}^d|}\int_{\mathbb{T}^d}\bm{A}(0, x) \,{\rm d}x,
\end{equation*}
where the right-hand side is a constant that depends only on initial data. 

Finally, we rewrite the equation \eqref{psii} as
\begin{equation}\label{psiii}
\begin{aligned}
    &(\sqrt{-1}\partial_t + \Delta_x + 2\sqrt{-1}\bm{C} \cdot \nabla_x - |\bm{C}|^2)\psi_k \\
    = &-2\sqrt{-1}\widetilde{\bm{A}} \cdot \nabla_x\psi_k + (A_0 + |\widetilde{\bm{A}}|^2 + 2\bm{C} \cdot \widetilde{\bm{A}})\psi_k - \sqrt{-1}\bm{\psi} \cdot \Im(\psi_k\overline{\bm{\psi}}).
\end{aligned}
\end{equation}
We conclude that 

\begin{proposition}\label{prop12}
    Suppose $u \in C^{\infty}([0, T] \times \mathbb{T}^d; \mathbb{S}^2)$ is a smooth solution to \eqref{LL} with initial data $\phi \in C^{\infty}(\mathbb{T}^d; \mathbb{S}^2)$, $\{e_1, e_2\}$ is a smooth frame of $u^{-1}T\mathbb{S}^2$ and the connection coefficients 
    \begin{equation*}
        A_k := \langle \partial_ke_1, e_2\rangle, \qquad k = 1, \cdots, d
    \end{equation*}
    satisfy the Coulomb gauge condition 
    \begin{equation*}
        \nabla_x\cdot \bm{A} \equiv 0, \qquad \bm{A} = (A_1, \cdots, A_d),
    \end{equation*}
    and also 
    \begin{equation*}
        \frac{1}{|\mathbb{T}^d|}\int_{\mathbb{T}^d}A_0(t, x) \,{\rm d}x = 0.
    \end{equation*}
    For $k = 1, \cdots, d$ we define 
    \begin{equation*}
        \psi_k := \langle \partial_ku, e_1\rangle + \sqrt{-1}\langle \partial_ku, e_2\rangle, \qquad \bm{\psi} = (\psi_1, \cdots, \psi_d).
    \end{equation*}
    Then $(\bm{A}, \bm{\psi})$ satisfies
    \begin{equation*}
        \begin{cases}
            D_k^{\bm{A}}\psi_{\ell} = D_{\ell}^{\bm{A}}\psi_k, \\ 
            \displaystyle \bm{A} = \nabla^{-1}_x\mathcal{R}_{\ell}(\Im(\overline{\psi_{\ell}}\bm{\psi})) + \frac{1}{|\mathbb{T}^d|}\int_{\mathbb{T}^d}\bm{A}(0) \,{\rm d}x =: \widetilde{\bm{A}} + \bm{C},
        \end{cases}
    \end{equation*}
    where $D_k^{\bm{A}} = \partial_k + \sqrt{-1}A_k, \bm{\widetilde{A}} = (\widetilde{A}_1, \cdots, \widetilde{A}_d), \bm{C} = (C_1, \cdots, C_d)$.
    Moreover, let 
    \begin{equation*}
        \begin{cases}
            \nabla_{\bm{C}} := \nabla_x + \sqrt{-1}\bm{C} = (D_1^{\bm{C}}, \cdots, D_d^{\bm{C}}), \\
            \Delta_{\bm{C}} := \nabla_{\bm{C}}^2 = \Delta_x + 2\sqrt{-1}\bm{C} \cdot \nabla_x - |\bm{C}|^2.
        \end{cases}
    \end{equation*}
    Then
\begin{equation}\label{psi-e}
\begin{aligned}
    (\sqrt{-1}\partial_t + \Delta_{\bm{C}})\psi_k = \mathcal{N}(\psi_k), \qquad k = 1, \cdots, d
\end{aligned}
\end{equation}
where 
\begin{align*}
    \mathcal{N}(\psi_k) = -2\sqrt{-1}\bm{\widetilde{A}} \cdot \nabla_x\psi_k + (A_0 + |\bm{\widetilde{A}}|^2 + 2\bm{C} \cdot \bm{\widetilde{A}})\psi_k - \sqrt{-1}\bm{\psi} \cdot \Im(\psi_k\overline{\bm{\psi}}),
\end{align*} 
and
\begin{equation}\label{A00}
    A_0 = \mathcal{R}_m\mathcal{R}_{\ell}\Re(\overline{\psi_{\ell}}\psi_m) + \frac{1}{2}|\bm{\psi}|^2.
\end{equation}
\end{proposition}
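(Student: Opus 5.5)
Proposition \ref{prop12} essentially collects the computations carried out above in Section \ref{MSMF}, so the plan is to assemble them in order and check that each step is legitimate for smooth solutions on $\mathbb{T}^d$.

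\textbf{Step 1: frame identities.} Starting from the frame \eqref{frame}, differentiate the orthonormality relations of $\{u, e_1, e_2\}$ to get \eqref{A}--\eqref{B}. From these, derive the compatibility condition \eqref{C1}, $D_k^{\bm{A}}\psi_\ell = D_\ell^{\bm{A}}\psi_k$, using the symmetry $\partial_\alpha\partial_\beta u = \partial_\beta\partial_\alpha u$. Then derive the curvature identity \eqref{C} using $\partial_\alpha\partial_\beta e_1 = \partial_\beta\partial_\alpha e_1$. The gauge change \eqref{gauge1}--\eqref{gauge} preserves all of these identities, since it is only a rotation of $(e_1, e_2)$ in the tangent plane. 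This lets us assume the normalization \eqref{Coulomb} throughout; solvability of \eqref{gauge} on $\mathbb{T}^d$ follows from the explicit formula for $\theta$.

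\textbf{Step 2: formula for $\bm{A}$.} Take the divergence of \eqref{C} in $\beta$ and use $\nabla_x\cdot\bm{A}=0$ to obtain \eqref{Ak}. On the torus, $\Delta_x$ is invertible only on mean-zero functions. So split $\bm{A}=\widetilde{\bm{A}}+\bm{C}(t)$ and invert on the mean-zero part to get $\widetilde{\bm{A}}=\nabla_x^{-1}\mathcal{R}_\ell\Im(\overline{\psi_\ell}\bm{\psi})$.

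Next show that $\bm{C}$ is constant in time. Integrate \eqref{C} with $\alpha=0$ over $\mathbb{T}^d$. The term $\partial_kA_0$ integrates to zero. The term $\Im(\psi_0\overline{\psi_k})$ is rewritten as a divergence, via $\psi_0=\sqrt{-1}D_\ell^{\bm{A}}\psi_\ell$ (from \eqref{D}, using the equation \eqref{LL}), the identity \eqref{in}, and \eqref{C1}, so it also integrates to zero.

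\textbf{Step 3: formula for $A_0$.} Take the divergence of \eqref{C} with $\alpha=0$; using $\partial_t\nabla_x\cdot\bm{A}=0$ this gives \eqref{A0}. Invert $\Delta_x$, using the zero-mean normalization of $A_0$ together with the fact that the right-hand side of \eqref{A0} is a pure divergence. The term $\tfrac12|\bm{\psi}|^2$ needs care here: inverting $\Delta_x$ on $\tfrac12\Delta_x|\bm{\psi}|^2$ only recovers $\tfrac12|\bm{\psi}|^2$ minus its mean. I would check this mean-zero bookkeeping carefully. Most likely the mean of $\mathcal{R}_m\mathcal{R}_\ell(\cdot)$ vanishes while the mean of $|\bm{\psi}|^2$ does not, in which case \eqref{A00} holds only up to a spatial constant $c(t)$. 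A constant multiple of $\psi_k$ in the equation can be absorbed by a time-dependent phase, which is a residual gauge freedom. The honest statement is therefore \eqref{A00} modulo this constant, or equivalently with the normalization of $A_0$ adjusted; this is harmless for the estimates that follow.

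\textbf{Step 4: the evolution equation (main step).} Starting from $D_0^{\bm{A}}\psi_k=D_k^{\bm{A}}\psi_0$, substitute \eqref{D}. Commute $D_k^{\bm{A}}$ past $D_\ell^{\bm{A}}$ using $[D_\alpha^{\bm{A}},D_\beta^{\bm{A}}]=\sqrt{-1}\Im(\psi_\alpha\overline{\psi_\beta})$, then apply \eqref{C1} once more; this gives \eqref{abss}. Expand $D_\ell^{\bm{A}}D_\ell^{\bm{A}}=\Delta_x+2\sqrt{-1}A_\ell\partial_\ell+\sqrt{-1}(\partial_\ell A_\ell)-|\bm{A}|^2$ and drop the $\partial_\ell A_\ell$ term by the Coulomb condition; this gives \eqref{psii}. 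Finally, insert $\bm{A}=\widetilde{\bm{A}}+\bm{C}$ and move the constant-coefficient part $2\sqrt{-1}\bm{C}\cdot\nabla_x-|\bm{C}|^2$ to the left-hand side, which produces $\Delta_{\bm{C}}$ and the nonlinearity $\mathcal{N}(\psi_k)$ in \eqref{psi-e}.

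I expect the main obstacle to be the sign and mean-value bookkeeping in Steps 3--4, namely deriving \eqref{D} from \eqref{LL} through the cross-product relations $u\times e_1=e_2$ and $u\times e_2=-e_1$, rather than any analytic difficulty.
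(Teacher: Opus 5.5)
Your proposal follows the paper's derivation in Section~\ref{MSMF} step for step (frame identities \eqref{C1}, \eqref{C}; then \eqref{Ak} and the time-independence of $\bm{C}$ via \eqref{in}; then \eqref{abss}$\to$\eqref{psii}$\to$\eqref{psi-e}), and it is correct. Your Step~3 caveat is also right, and it catches a slip in the paper, which passes from \eqref{A0} to \eqref{A00} without a mean correction. Under the stated normalization $\int_{\mathbb{T}^d}A_0\,{\rm d}x=0$, one actually gets $A_0=\mathcal{R}_m\mathcal{R}_\ell\Re(\overline{\psi_\ell}\psi_m)+\frac12|\bm{\psi}|^2-c$ with $c=\mathcal{E}(\phi)/|\mathbb{T}^d|$. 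This $c$ is independent of $t$, because $|\bm{\psi}|=|\nabla_xu|$ and energy is conserved. Hence \eqref{A00} holds exactly only after the residual gauge change $\psi_k\mapsto \e^{-\sqrt{-1}ct}\psi_k$, equivalently after normalizing the mean of $A_0$ to $c$ instead of $0$. As you say, the constant is harmless: it drops out of the mass and momentum balance laws used later.
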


\subsection{Energy estimates}\label{EnergyEs}

From the analysis in Section \ref{Balance}, for a smooth solution to \eqref{psi-e} we have  
\begin{equation*}
    \partial_t\frac{|P_{\lambda}\psi_k|^2}{2} + \nabla_x \cdot \Im(\overline{P_{\lambda}\psi_k}\nabla_{\bm{C}} P_{\lambda}\psi_k) = \Im(\overline{P_{\lambda}\psi_k}P_{\lambda}\mathcal{N}(\psi_k)),
\end{equation*}
and hence 
\begin{equation*}
    \begin{aligned}
        \int_{\mathbb{T}^d}\frac{|P_{\lambda}\psi_k(t)|^2}{2} \,{\rm d}x = \int_{\mathbb{T}^d}\frac{|P_{\lambda}\psi_k(0)|^2}{2} \,{\rm d}x + \int_0^t\int_{\mathbb{T}^d}\Im(\overline{P_{\lambda}\psi_k}P_{\lambda}\mathcal{N}(\psi_k)) \,{\rm d}x{\rm d}t.
    \end{aligned}
\end{equation*} 
For the first term on the right-hand side, 
\begin{equation*}
    \int_{\mathbb{T}^d}\frac{|P_{\lambda}\psi_k(0)|^2}{2} \,{\rm d}x = \Vert P_{\lambda}\psi_k(0)\Vert_{L_x^2}^2 \lesssim \lambda^{-2\sigma + 2}a_{\lambda}^2.
\end{equation*}
For the second term, we have
\begin{equation*}
    \begin{aligned}
        \int_0^t\int_{\mathbb{T}^d} \Im(\overline{P_{\lambda}\psi_k}P_{\lambda}\mathcal{N}(\psi_k)) \,{\rm d}x{\rm d}t = \sum_{i = 1}^3\int_0^t\int_{\mathbb{T}^d}\Im(\overline{P_{\lambda}\psi_k}P_{\lambda}\mathcal{N}_i(\psi_k)) \,{\rm d}x{\rm d}t =: \mathcal{I}_1 + \mathcal{I}_2 + \mathcal{I}_3
    \end{aligned}
\end{equation*}
where 
\begin{equation*}
    \begin{aligned}
        &\mathcal{N}_1(\psi_k) = -2\sqrt{-1}\bm{\widetilde{A}} \cdot \nabla_x\psi_k \\
        &\mathcal{N}_2(\psi_k) = (A_0 + |\bm{\widetilde{A}}|^2 + 2\bm{C} \cdot \widetilde{\bm{A}})\psi_k, \\ 
        &\mathcal{N}_3(\psi_k) = -\sqrt{-1}\bm{\psi} \cdot \Im(\psi_k\overline{\bm{\psi}}).
    \end{aligned}
\end{equation*}

The following sections are devoted to proving 
\begin{equation*}
    \mathcal{I}_i \lesssim \lambda^{-2\sigma + 2}a_{\lambda}^2, \qquad i = 1, 2, 3,
\end{equation*}
where $t \in [0, T^*]$ and $T^* = T^*(\Vert \bm{\psi}(0)\Vert_{H_x^{\sigma - 1}}) > 0$.

\subsubsection{Estimates for \texorpdfstring{$\mathcal{I}_3$}{}}\label{3.1.1}

By symmetry we have
\begin{equation*}
    \begin{aligned}
        \mathcal{I}_3 \lesssim \left(\sum\nolimits_{3a} + \sum\nolimits_{3b} + \sum\nolimits_{3c}\right)\Vert P_{\lambda}\psi_kP_{\lambda_1}\psi_kP_{\lambda_2}\bm{\psi} \cdot P_{\lambda_3}\bm{\psi}\Vert_{L_{t, x}^1} =: \mathcal{I}_{3a} + \mathcal{I}_{3b} + \mathcal{I}_{3c},
    \end{aligned}
\end{equation*}
where 
\begin{equation*}
    \begin{aligned}
        \sum\nolimits_{3a} &= \sum_{\lambda_1 \sim \lambda \gg \lambda_2, \lambda_3} + \sum_{\lambda_3 \sim \lambda \gg \lambda_1, \lambda_2}, \\
        \sum\nolimits_{3b} &= \sum_{\lambda_2 \sim \lambda_3 \gtrsim \lambda \gg \lambda_1} + \sum_{\lambda_1 \sim \lambda_3 \gtrsim \lambda \gg \lambda_2},\\ 
        \sum\nolimits_{3c} &= \sum_{\lambda_2 \sim \lambda_3 \gtrsim \lambda_1 \gtrsim \lambda}.
    \end{aligned}
\end{equation*}

We first estimate $\mathcal{I}_{3a}$. From H\"older inequality and Bernstein inequality
\begin{equation*}
    \begin{aligned}
        \mathcal{I}_{3a} &\lesssim \sum_{i, j = 1}^d\sum_{\lambda \gg \lambda_1, \lambda_2}\Vert P_{\lambda, e_i}\bm{\psi} \cdot P_{\lambda_1}\bm{\psi}\Vert_{L_{t, x}^2}\Vert P_{\lambda, e_j}\bm{\psi} \cdot P_{\lambda_2}\bm{\psi}\Vert_{L_{t, x}^2} \\ 
        &\lesssim \sum_{i, j = 1}^d\sum_{\lambda \gg \lambda_1, \lambda_2}\lambda_1^{\frac{d - 1}{2}}\lambda_2^{\frac{d - 1}{2}}\left\Vert \Vert P_{\lambda, e_i}\bm{\psi}\Vert_{L_{\widehat{x}_i}^2} \Vert P_{\lambda_1}\bm{\psi}\Vert_{L_{\widehat{x}_i}^2}\right\Vert_{L_{t, x_i}^2}\left\Vert \Vert P_{\lambda, e_j}\bm{\psi}\Vert_{L_{\widehat{x}_j}^2} \Vert P_{\lambda_2}\bm{\psi}\Vert_{L_{\widehat{x}_j}^2}\right\Vert_{L_{t, x_j}^2}.
    \end{aligned}
\end{equation*}
For the control of the bilinear term
\begin{equation*}
    \mathcal{B} := \left\Vert \Vert P_{\lambda, e_i}\bm{\psi}\Vert_{L_{\widehat{x}_i}^2} \Vert P_{\lambda_1}\bm{\psi}\Vert_{L_{\widehat{x}_i}^2}\right\Vert_{L_{t, x_i}^2}
\end{equation*}
in the summation, we split it into two parts (recall that $\sigma = (d + 1)/2 + \varepsilon$ and $0 < \varepsilon \leq 1/2$):
\begin{equation*}
    \begin{aligned}
        \mathcal{B} &= \left\Vert \Vert P_{\lambda, e_i}\bm{\psi}\Vert_{L_{\widehat{x}_i}^2} \Vert P_{\lambda_1}\bm{\psi}\Vert_{L_{\widehat{x}_i}^2}\right\Vert_{L_{t, x_i}^2}^{\varepsilon}\left\Vert \Vert P_{\lambda, e_i}\bm{\psi}\Vert_{L_{\widehat{x}_i}^2} \Vert P_{\lambda_1}\bm{\psi}\Vert_{L_{\widehat{x}_i}^2}\right\Vert_{L_{t, x_i}^2}^{1 - \varepsilon} \\ 
    &=: \mathcal{B}_1^{\varepsilon}\mathcal{B}_2^{1 - \varepsilon}.
    \end{aligned}
\end{equation*}
On the one hand, from H\"older inequality, Bernstein inequality and \eqref{11}
\begin{equation*}
    \begin{aligned}
    \mathcal{B}_1^{\varepsilon} &\leq \Vert P_{\lambda, e_1}\bm{\psi}\Vert_{L_{t, x}^2}^{\varepsilon}\Vert P_{\lambda_1}\bm{\psi}\Vert_{L_t^{\infty}L_{x_i}^{\infty}L_{\widehat{x}_i}^2}^{\varepsilon}\\
     &\lesssim t^{\frac{\varepsilon}{2}}\lambda_1^{\frac{\varepsilon}{2}}\Vert P_{\lambda, e_1}\bm{\psi}\Vert_{L_t^{\infty}L_x^2}^{\varepsilon}\Vert P_{\lambda_1}\bm{\psi}\Vert_{L_t^{\infty}L_x^2}^{\varepsilon} \\
    &\lesssim t^{\frac{\varepsilon}{2} - \frac{\varepsilon^2}{4}}\lambda_1^{\frac{\varepsilon}{2}}(\lambda^{-\sigma + 1}\lambda_1^{-\sigma + 1}a_{\lambda}a_{\lambda_1})^{\varepsilon},
    \end{aligned}
\end{equation*}
On the other hand, by \eqref{13}
\begin{equation*}
    \mathcal{B}_2^{1 - \varepsilon} \lesssim t^{-(1 - \varepsilon)\frac{\varepsilon}{8}}(\lambda^{-\sigma + 1}\lambda_1^{-\sigma + 1}a_{\lambda}a_{\lambda_1})^{1 - \varepsilon},
\end{equation*}
Therefore, 
\begin{equation}\label{L2}
    \begin{aligned}
        \mathcal{B} \lesssim t^{\frac{3\varepsilon - \varepsilon^2}{8}}\lambda^{-\sigma + 1}\lambda_1^{-\sigma + 1 + \frac{\varepsilon}{2}}a_{\lambda}a_{\lambda_1}.
    \end{aligned}
\end{equation}
Similarly, 
\begin{equation*}
    \begin{aligned}
        \left\Vert \Vert P_{\lambda, e_i}\bm{\psi}\Vert_{L_{\widehat{x}_i}^2} \Vert P_{\lambda_2}\bm{\psi}\Vert_{L_{\widehat{x}_i}^2}\right\Vert_{L_{t, x_i}^2} \lesssim t^{\frac{3\varepsilon - \varepsilon^2}{8}}\lambda^{-\sigma + 1}\lambda_2^{-\sigma + 1 + \frac{\varepsilon}{2}}a_{\lambda}a_{\lambda_2}.
    \end{aligned}
\end{equation*}
Now we obtain 
\begin{equation*}
    \begin{aligned}
        \mathcal{I}_{3a} \lesssim t^{\frac{3\varepsilon - \varepsilon^2}{4}}\lambda^{-2\sigma + 2}a_{\lambda}^2\sum_{\lambda \gg \lambda_1, \lambda_2}\lambda_1^{-\sigma + \frac{d + 1}{2} + \frac{\varepsilon}{2}}\lambda_2^{-\sigma + \frac{d + 1}{2} + \frac{\varepsilon}{2}}a_{\lambda_1}a_{\lambda_2}.
    \end{aligned}
\end{equation*}
Note that
\begin{equation*}
    \frac{3\varepsilon - \varepsilon^2}{4} > 0, \qquad -\sigma + \frac{d + 1}{2} + \frac{\varepsilon}{2} < 0
\end{equation*}
since $\sigma = (d + 1)/2 + \varepsilon, 0 < \varepsilon \leq 1/2$. It follows from H\"older inequality that 
\begin{equation*}
    \mathcal{I}_{3a} \lesssim t^{\frac{3\varepsilon - \varepsilon^2}{4}}\left(\sum_{\lambda_1}a_{\lambda_1}^2\right)^{\frac{1}{2}}\left(\sum_{\lambda_2}a_{\lambda_2}^2\right)^{\frac{1}{2}}\lambda^{-2\sigma + 2}a_{\lambda}^2 \lesssim t^{\frac{3\varepsilon - \varepsilon^2}{4}}\Vert \bm{\psi}(0)\Vert_{H_x^{\sigma - 1}}^2\lambda^{-2\sigma + 2}a_{\lambda}^2.
\end{equation*}
Thus, we can choose $T^* = T^*(\Vert \bm{\psi}(0)\Vert_{H_x^{\sigma - 1}}) > 0$ such that 
\begin{equation*}
    \mathcal{I}_{3a} \lesssim \lambda^{-2\sigma + 2}a_{\lambda}^2, \qquad t \in [0, T^*].
\end{equation*} 

Moving to $\mathcal{I}_{3b}$, we further split the summation into two parts:
\begin{align*}
    \mathcal{I}_{3b} \lesssim \left(\sum_{\lambda_2 \sim \lambda_3 \gg \lambda \gg \lambda_1} + \sum_{\lambda_2 \sim \lambda_3 \sim \lambda \gg \lambda_1}\right)\Vert P_{\lambda}\psi_kP_{\lambda_1}\psi_kP_{\lambda_2}\bm{\psi} \cdot P_{\lambda_3}\bm{\psi}\Vert_{L_{t, x}^1} =: \mathcal{I}_{3b}^{\alpha} + \mathcal{I}_{3b}^{\beta}
\end{align*}
The term $\mathcal{I}_{3b}^{\alpha}$ can be treated exactly the same as $\mathcal{I}_{3a}$, using $L_{t, x}^2$ bilnear estimates \eqref{13} twicely:
\begin{equation*}
    \begin{aligned}
        \mathcal{I}_{3b}^{\alpha} &\lesssim \sum_{i, j = 1}^d\sum_{\lambda_2 \gg \lambda \gg \lambda_1}\lambda^{\frac{d - 1}{2}}\lambda_1^{\frac{d - 1}{2}}\left\Vert \Vert P_{\lambda_2, e_i}\bm{\psi}\Vert_{L_{\widehat{x}_i}^2} \Vert P_{\lambda}\bm{\psi}\Vert_{L_{\widehat{x}_i}^2}\right\Vert_{L_{t, x_i}^2}\left\Vert \Vert P_{\lambda_2, e_j}\bm{\psi}\Vert_{L_{\widehat{x}_j}^2} \Vert P_{\lambda_1}\bm{\psi}\Vert_{L_{\widehat{x}_j}^2}\right\Vert_{L_{t, x_j}^2} \\ 
        &\lesssim t^{\frac{3\varepsilon - \varepsilon^2}{4}}\sum_{\lambda_2 \gg \lambda \gg \lambda_1}\lambda_2^{-\sigma + 1}a_{\lambda_2}\lambda^{-\sigma + 1 + \frac{d - 1}{2} + \frac{\varepsilon}{2}}a_{\lambda}\lambda_2^{-\sigma + 1}a_{\lambda_2}\lambda_1^{-\sigma + 1 + \frac{d - 1}{2} + \frac{\varepsilon}{2}}a_{\lambda_1} \\ 
        &\lesssim  t^{\frac{3\varepsilon - \varepsilon^2}{4}}\lambda^{-\sigma + 1}a_{\lambda}\sum_{\lambda_2 \gg \lambda \gg \lambda_1} \lambda_2^{-\sigma + 1 + \frac{d - 1}{2} + \frac{\varepsilon}{2}}a_{\lambda_2}\lambda_2^{-\sigma + 1}a_{\lambda_2}\lambda_1^{-\sigma + \frac{d + 1}{2} + \frac{\varepsilon}{2}}a_{\lambda_1} \\ 
        &\lesssim t^{\frac{3\varepsilon - \varepsilon^2}{4}}\lambda^{-2\sigma + 2}a_{\lambda}^2\sum_{\lambda_2 \gg \lambda \gg \lambda_1} \lambda_2^{-\sigma + \frac{d + 1}{2} + \frac{\varepsilon}{2}}a_{\lambda_2}\left(\frac{\lambda_2}{\lambda}\right)^{-\sigma + 1 + \delta}\lambda_1^{-\sigma + \frac{d + 1}{2} + \frac{\varepsilon}{2}}a_{\lambda_1} \\ 
        &\lesssim \lambda^{-2\sigma + 2}a_{\lambda}^2.
    \end{aligned}
\end{equation*}
where we use the property \eqref{fre1} of frequency envelope $\{a_{\lambda}\}$.
Note also that we can choose $\delta = \delta(\sigma, d) > 0$ sufficiently small such that $-\sigma + 1 + \delta < 0$,
For $\mathcal{I}_{3b}^{\beta}$, by H\"older inequality and Bernstein inequality
\begin{equation*}
    \mathcal{I}_{3b}^{\beta} \lesssim \sum_{i = 1}^d\sum_{\lambda \gg \lambda_1}\lambda_1^{\frac{d - 1}{2}}\Vert P_{\lambda}\bm{\psi}\Vert_{L_{t, x}^4}^2\left\Vert \Vert P_{\lambda, e_i}\bm{\psi}\Vert_{L_{\widehat{x}_i}^2} \Vert P_{\lambda_1}\bm{\psi}\Vert_{L_{\widehat{x}_i}^2}\right\Vert_{L_{t, x_i}^2}
\end{equation*}
The bilinear term 
\begin{equation*}
    \widetilde{\mathcal{B}} := \left\Vert \Vert P_{\lambda, e_i}\bm{\psi}\Vert_{L_{\widehat{x}_i}^2} \Vert P_{\lambda_1}\bm{\psi}\Vert_{L_{\widehat{x}_i}^2}\right\Vert_{L_{t, x_i}^2}
\end{equation*}
can be controlled in the same way as $\mathcal{B}$, yielding the bound (see \eqref{L2})
\begin{equation*}
    \widetilde{\mathcal{B}} \lesssim t^{\frac{3\varepsilon - \varepsilon^2}{8}}\lambda^{-\sigma + 1}\lambda_1^{-\sigma + 1 + \frac{\varepsilon}{2}}a_{\lambda}a_{\lambda_1}.
\end{equation*}
For the estimates of $\Vert P_{\lambda}\bm{\psi}\Vert_{L_{t, x}^4}$, by \eqref{13}-\eqref{20} and Bernstein inequality 
\begin{equation}\label{L4L4}
    \begin{aligned}
        \Vert P_{\lambda}\bm{\psi}\Vert_{L_{t, x}^4} &= \Vert P_{\lambda}\bm{\psi}\Vert_{L_{t, x}^4}^{\varepsilon}\Vert P_{\lambda}\bm{\psi}\Vert_{L_{t, x}^4}^{1 - \varepsilon} \\
        &\lesssim (t^{\frac{1}{4}}\lambda^{\frac{d}{4}}\Vert P_{\lambda}\bm{\psi}\Vert_{L_t^{\infty}L_x^2})^{\varepsilon}(\lambda^{\frac{d - 3}{4}}\Vert P_{\lambda}\bm{\psi}\Vert_{L_{t, y}^4L_z^2})^{1 - \varepsilon} \\
        &= (t^{\frac{1}{4} - \frac{\varepsilon}{8}}\lambda^{-\sigma + \frac{d}{4} + 1}a_{\lambda})^{\varepsilon}(t^{-\frac{\varepsilon}{8}}\lambda^{-\sigma + \frac{d + 3}{4}}a_{\lambda})^{1 - \varepsilon} \\
        &= t^{\frac{\varepsilon}{8}}\lambda^{-\sigma + \frac{d}{4} + \frac{\varepsilon + 3}{4}}a_{\lambda},
    \end{aligned}
\end{equation}
where $x = (y, z) \in \mathbb{T}^3 \times \mathbb{T}^{d - 3}$.
Combining all the ingredients above we obtain 
\begin{equation*}
    \begin{aligned}
        \mathcal{I}_{3b}^{\beta} &\lesssim \sum_{i = 1}^d\sum_{\lambda \gg \lambda_1}\lambda_1^{\frac{d - 1}{2}}(t^{\frac{\varepsilon}{8}}\lambda^{-\sigma + \frac{d}{4} + \frac{\varepsilon + 3}{4}}a_{\lambda})^2(t^{\frac{3\varepsilon - \varepsilon^2}{8}}\lambda^{-\sigma + 1}\lambda_1^{-\sigma + 1 + \frac{\varepsilon}{2}}a_{\lambda}a_{\lambda_1}) \\ 
        &\lesssim t^{\frac{5\varepsilon - \varepsilon^2}{8}}\lambda^{-2\sigma + 2}a_{\lambda}^2\sum_{\lambda \gg \lambda_1}\lambda^{-\sigma + \frac{d + 1}{2} + \frac{\varepsilon}{2}}a_{\lambda}\lambda_1^{-\sigma + \frac{d + 1}{2} + \frac{\varepsilon}{2}}a_{\lambda_1} \\ 
        &\lesssim \lambda^{-2\sigma + 2}a_{\lambda}^2,
    \end{aligned}
\end{equation*}
We note that since $(5\varepsilon - \varepsilon^2)/8 > 0$ and $-\sigma + (d + 1)/2 + \varepsilon/2 > 0$, the last inequality holds by choosing $T^* = T^*(\Vert \bm{\psi}(0)\Vert_{H_x^{\sigma - 1}}) > 0$  sufficiently small.
Now we obtain 
\begin{equation*}
    \mathcal{I}_{3b} \lesssim \lambda^{-2\sigma + 2}a_{\lambda}^2, \qquad t \in [0, T^*].
\end{equation*}

Finally, for $\mathcal{I}_{3c}$ we split it into three parts:
\begin{align*}
    \mathcal{I}_{3c} &= \left(\sum_{\lambda_2 \sim \lambda_3 \gg \lambda_1 \gtrsim \lambda} + \sum_{\lambda_2 \sim \lambda_3 \sim \lambda_1 \gg \lambda} + \sum_{\lambda_2 \sim \lambda_3 \sim \lambda_1 \sim \lambda}\right)\Vert P_{\lambda}\psi_kP_{\lambda_1}\psi_kP_{\lambda_2}\bm{\psi} \cdot P_{\lambda_3}\bm{\psi}\Vert_{L_{t, x}^1} \\
    &=: \mathcal{I}_{3c}^{\alpha} + \mathcal{I}_{3c}^{\beta} + \mathcal{I}_{3c}^{\gamma}.
\end{align*}
The terms $\mathcal{I}_{3c}^{\alpha}$ and $\mathcal{I}_{3c}^{\beta}$ can be treated exactly the same as $\mathcal{I}_{3a}$ and $\mathcal{I}_{3b}^{\beta}$, respectively. 
For the term $\mathcal{I}_{3c}^{\gamma}$, by \eqref{L4L4} we have 
\begin{equation*}
    \begin{aligned}
        \mathcal{I}_{3c} &\lesssim \sum_{\lambda_1 \sim \lambda}\Vert P_{\lambda_1}\bm{\psi}\Vert_{L_{t, x}^4}^2\Vert P_{\lambda}\bm{\psi}\Vert_{L_{t, x}^4}^2 \\
        &\lesssim t^{\frac{\varepsilon}{2}}\sum_{\lambda_1 \sim \lambda}\lambda_1^{-2\sigma + \frac{d}{2} + \frac{\varepsilon + 3}{2}}a_{\lambda_1}^2\lambda^{-2\sigma + \frac{d}{2} + \frac{\varepsilon + 3}{2}}a_{\lambda}^2 \\ 
        &\lesssim t^{\frac{\varepsilon}{2}}\lambda^{-2\sigma + 2}a_{\lambda}^2\sum_{\lambda_1 \sim \lambda}(\lambda_1^{-\sigma + \frac{d + 1}{2} + \frac{\varepsilon}{2}}a_{\lambda_1})^2 \\
        &\lesssim \lambda^{-2\sigma + 2}a_{\lambda}^2,
    \end{aligned}
\end{equation*}
where $t \in [0, T^*]$ and $(T^*)^{\varepsilon/2}\Vert \bm{\psi}(0)\Vert_{H_x^{\sigma - 1}}^2 \leq 1$.
To conclude, 
\begin{equation*}
    \mathcal{I}_3 \lesssim \mathcal{I}_3^a + \mathcal{I}_3^b + \mathcal{I}_3^c \lesssim  \lambda^{-2\sigma + 2}a_{\lambda}^2.
\end{equation*}

\subsubsection{Estimates for \texorpdfstring{$\mathcal{I}_1$}{}}\label{I1}

We recall that 
\begin{equation*}
    \widetilde{\bm{A}} = \nabla^{-1}_x\sum_{k = 1}^d\mathcal{R}_k(\Im(\overline{\psi_k}\bm{\psi}))
\end{equation*}
satisfies 
\begin{equation}\label{Cou}
    \nabla_x \cdot \widetilde{\bm{A}} = \nabla_x \cdot (\bm{A} - \bm{C}) = \nabla_x \cdot \bm{A} = 0.
\end{equation}
Using \eqref{Cou} and integrating by parts, we have 
\begin{equation*}
    \begin{aligned}
        \mathcal{I}_1 &= -2\int_0^t\int_{\mathbb{T}^d}\Re(\overline{P_{\lambda}\psi_k}P_{\lambda}(\widetilde{\bm{A}} \cdot \nabla_x\psi_k)) \,{\rm d}x{\rm d}t \\
        &= -2\int_0^t\int_{\mathbb{T}^d}\Re(\overline{P_{\lambda}\psi_k}P_{\lambda}\nabla_x \cdot (\widetilde{\bm{A}}\psi_k)) \,{\rm d}x{\rm d}t \\
        &= 2\int_0^t\int_{\mathbb{T}^d}\Re(\overline{\nabla_xP_{\lambda}\psi_k} \cdot P_{\lambda}(\widetilde{\bm{A}}\psi_k)) \,{\rm d}x{\rm d}t.
    \end{aligned}
\end{equation*}
It follows that 
\begin{equation*}
    \begin{aligned}
        \mathcal{I}_1 &= 2\int_0^t\int_{\mathbb{T}^d}\Re(\overline{\nabla_xP_{\lambda}\psi_k} \cdot [P_{\lambda}, \widetilde{\bm{A}}]\psi_k) \,{\rm d}x{\rm d}t + 2\int_0^t\int_{\mathbb{T}^d}\Re(\overline{\nabla_xP_{\lambda}\psi_k} \cdot (\widetilde{\bm{A}}P_{\lambda}\psi_k)) \,{\rm d}x{\rm d}t \\
        &=: \mathcal{I}_1^a + \mathcal{I}_1^b.
    \end{aligned}
\end{equation*}

We first note that, from \eqref{Cou} and integration by parts
\begin{equation*}
    \mathcal{I}_1^b = 2\int_0^t\int_{\mathbb{T}^d}\widetilde{\bm{A}} \cdot \nabla_x\frac{|P_{\lambda}\psi_k|^2}{2} \,{\rm d}x{\rm d}t = -\int_0^t\int_{\mathbb{T}^d}(\nabla_x \cdot \bm{\widetilde{A}})|P_{\lambda}\psi_k|^2 \,{\rm d}x{\rm d}t = 0.
\end{equation*}
Hence $\mathcal{I}_1^b$ has no contribution to $\mathcal{I}_1$.
Estimating $\mathcal{I}_1^a$, using Lemma \ref{BONYB} we have 
\begin{equation*}
    \begin{aligned}
        \mathcal{I}_1^a &\lesssim \sum_{\lambda_1 \ll \lambda_2 \sim \lambda}\lambda^{-1}\Vert \nabla_xP_{\lambda}\psi_k \cdot L(\nabla_xP_{\lambda_1}\widetilde{\bm{A}}, P_{\lambda_2}\psi_k)\Vert_{L_{t, x}^1} \\
        &\quad+ \sum_{\lambda_2 \ll \lambda_1 \sim \lambda}\Vert \nabla_xP_{\lambda}\psi_k \cdot L(P_{\lambda_1}\widetilde{\bm{A}}, P_{\lambda_2}\psi_k)\Vert_{L_{t, x}^1}\\
        &\quad+ \sum_{\lambda_1 \sim \lambda_2 \gtrsim \lambda}\Vert \nabla_xP_{\lambda}\psi_k \cdot L(P_{\lambda_1}\widetilde{\bm{A}}, P_{\lambda_2}\psi_k)\Vert_{L_{t, x}^1} \\
        &=: \mathcal{I}_{1\alpha}^a + \mathcal{I}_{1\beta}^a + \mathcal{I}_{1\gamma}^a.
    \end{aligned}
\end{equation*}
For the term $\mathcal{I}_{1\alpha}^a$, by Lemma \ref{Comuu} and the definition of $L$-notation
\begin{equation*}
    \mathcal{I}_{1\alpha}^a \lesssim \sum_{\lambda_1 \ll \lambda}\sup_{a, b, c}\Vert P_{\lambda}\psi_k^a|\nabla_xP_{\lambda_1}\widetilde{\bm{A}}^b|P_{\lambda}\psi_k^c\Vert_{L_{t, x}^1}.
\end{equation*}
From the definition of $\widetilde{\bm{A}}$, using the same techniques when controlling $\mathcal{I}_3$ in Section \ref{EnergyEs} we obtain 
\begin{equation*}
    \mathcal{I}_{1\alpha}^a \lesssim \lambda^{-2\sigma + 2}a_{\lambda}^2.
\end{equation*}
Moreover, it is clear that
\begin{equation*}
    \mathcal{I}_{1\beta}^a, \mathcal{I}_{1\gamma}^a \lesssim \lambda^{-2\sigma + 2}a_{\lambda}^2,
\end{equation*}
since the derivative from the term $\nabla_xP_{\lambda}\psi_k$ is canceled by the $-1$ derivative contained in $P_{\lambda_1}\widetilde{\bm{A}}$, and we can control $\mathcal{I}_{1\beta}^a$ and $\mathcal{I}_{1\gamma}^a$ in the same way as $\mathcal{I}_3$.
To conlude, 
\begin{equation*}
    \mathcal{I}_1 \lesssim \lambda^{-2\sigma + 2}a_{\lambda}^2.
\end{equation*}

\subsubsection{Estimates for \texorpdfstring{$\mathcal{I}_2$}{}}\label{I2}

We have 
\begin{equation*}
    \mathcal{I}_2 \lesssim \Vert P_{\lambda}\psi_kP_{\lambda}(A_0\psi_k)\Vert_{L_{t, x}^1} + \Vert P_{\lambda}\psi_kP_{\lambda}((|\widetilde{\bm{A}}|^2 + 2\bm{C} \cdot \bm{\widetilde{A}})\psi_k)\Vert_{L_{t, x}^1}
\end{equation*}
For the first term, from \eqref{A00} we know that $A_0$ can be regarded as a quadric term of $\bm{\psi}$, containing no derivatives at scale $\lambda$. Therefore, by the analysis in Section \ref{3.1.1} we have 
\begin{equation*}
    \Vert P_{\lambda}\psi_kP_{\lambda}(A_0\psi_k)\Vert_{L_{t, x}^1} \lesssim \lambda^{-2\sigma + 2}a_{\lambda}^2.
\end{equation*}
For the second term, from Sobolev embeddings $H_x^{(d/2)+} \hookrightarrow L_x^{\infty}$ and Proposition \ref{B1}, it is obvious that 
\begin{equation*}
    \Vert P_{\lambda}\psi_kP_{\lambda}((|\widetilde{\bm{A}}|^2 + 2\bm{C} \cdot \bm{\widetilde{A}})\psi_k)\Vert_{L_{t, x}^1} \lesssim \lambda^{-2\sigma + 2}a_{\lambda}^2.
\end{equation*}
Thus, 
\begin{equation*}
    \mathcal{I}_2 \lesssim \lambda^{-2\sigma + 2}a_{\lambda}^2.
\end{equation*}

\subsection{\texorpdfstring{$L_{t, x}^2$}{} bilinear estimate}\label{L2L2}

Without loss of generality, we only restrict our attention to the case $i = 1$ in proving \eqref{16}.
We recall the mass balance law (see Section \ref{Balance})
\begin{equation}\label{mass}
    \partial_t\int_{\mathbb{T}^{d - 1}}\frac{|P_{\mu}\psi_k^a|^2}{2} \,{\rm d}\widehat{x}_1 + \partial_1\int_{\mathbb{T}^{d - 1}}\Im(\overline{P_{\mu}\psi_k^a}D_1^{\bm{C}}P_{\mu}\psi_k^a) \,{\rm d}\widehat{x}_1 = \int_{\mathbb{T}^{d - 1}}\Im(\overline{P_{\mu}\psi_k^a}P_{\mu}\mathcal{N}(\psi_k^a)) \,{\rm d}\widehat{x}_1,
\end{equation}
and momentum balance law
\begin{equation}\label{mom}
    \begin{aligned}
        &-\partial_t\int_{\mathbb{T}^{d - 1}}\Im(\overline{P_{\lambda, e_1}\psi_{\ell}^b}D_1^{\bm{C}}P_{\lambda, e_1}\psi_{\ell}^b) \,{\rm d}\widehat{x}_1 + \partial_1\int_{\mathbb{T}^{d - 1}}\left(\partial_1^2\frac{|P_{\lambda, e_1}\psi_{\ell}^b|^2}{2} - 2|D_1^{\bm{C}}P_{\lambda, e_1}\psi_{\ell}^b|^2\right) \,{\rm d}\widehat{x}_1 \\
        = &\int_{\mathbb{T}^{d - 1}}\{P_{\lambda, e_1}\psi_{\ell}^b, P_{\lambda, e_1}\mathcal{N}(\psi_\ell^b)\}_1^{\bm{C}} \,{\rm d}\widehat{x}_1.
    \end{aligned}
\end{equation}
where $D_1^{\bm{C}} = \partial_1+ \sqrt{-1}C_1$.
Applying div-curl lemma \eqref{div1} to the system \eqref{mass}-\eqref{mom}, we have
\begin{equation}\label{dc1}
    {\rm LHS} \leq {\rm RHS},
\end{equation}
where 
\begin{equation*}
\begin{aligned}
    {\rm LHS} &= L_1 + L_2, \\ 
    {\rm RHS} &= R_1 + R_2 + R_3 + R_4
\end{aligned}
\end{equation*}
and
\begin{equation*}
\begin{aligned}
    L_1 &= -\int_0^t\int_0^1\left(\int_{[0, 1]^{d - 1}}\frac{|P_{\mu}\psi_k^a|^2}{2} \,{\rm d}\widehat{x}_1\right)\left(\int_{[0, 1]^{d - 1}}\left(\partial_1^2\frac{|P_{\lambda, e_1}\psi_{\ell}^b|^2}{2} - 2|D_1^{\bm{C}}P_{\lambda, e_1}\psi_{\ell}^b|^2\right) \,{\rm d}\widehat{x}_1\right) \,{\rm d}x_1{\rm d}t,\\
    L_2 &= -\int_0^t\int_0^1\left(\int_{[0, 1]^{d - 1}}\Im(\overline{P_{\lambda, e_1}\psi_{\ell}^b}D_1^{\bm{C}}P_{\lambda, e_1}\psi_{\ell}^b) \,{\rm d}\widehat{x}_1\right)\left(\int_{[0, 1]^{d - 1}}\Im(\overline{P_{\mu}\psi_k^a}D_1^{\bm{C}}P_{\mu}\psi_k^a) \,{\rm d}\widehat{x}_1\right) \,{\rm d}x_1{\rm d}t, \\
    R_1 &= 4\sup_t\Vert P_{\mu}\psi_k^a\Vert_{L_x^2}^2\Vert \Im(\overline{P_{\lambda, e_1}\psi_{\ell}^b}D_1^{\bm{C}}P_{\lambda, e_1}\psi_{\ell}^b)\Vert_{L_x^1}, \\
    R_2 &= \Bigg|\int_0^t\int_0^1\left(\int_{[0, 1]^{d - 1}}\Im(\overline{P_{\lambda, e_1}\psi_{\ell}^b}D_1^{\bm{C}}P_{\lambda, e_1}\psi_{\ell}^b) \,{\rm d}\widehat{x}_1\right)\\
    &\quad\quad\quad\,\,\,\quad \left(\int_0^1\int_{y_1}^{x_1}\int_{[0, 1]^{d - 1}}\Im(\overline{P_{\mu}\psi_k^a}P_{\mu}\mathcal{N}(\psi_k^a)) \,{\rm d}\widehat{x}_1{\rm d}z_1{\rm d}y_1 \right)\,{\rm d}x_1{\rm d}t\Bigg| \\
    &\quad+ \Bigg|\int_0^t\left(\int_{[0, 1]^d}\Im(\overline{P_{\mu}\psi_k^a}P_{\mu}\mathcal{N}(\psi_k^a)) \,{\rm d}x\right)\\
    &\quad\quad\,\,\,\,\quad\left(\int_0^1\left(x - \frac{1}{2}\right)\int_{[0, 1]^{d - 1}}\Im(\overline{P_{\lambda, e_1}\psi_{\ell}^b}D_1^{\bm{C}}P_{\lambda, e_1}\psi_{\ell}^b) \,{\rm d}\widehat{x}_1\right) \,{\rm d}x_1\Bigg|\\
    R_3 &= \Bigg|\int_0^t\int_0^1\left(\int_{[0, 1]^{d - 1}}\{P_{\lambda, e_1}\psi_{\ell}^b, P_{\lambda, e_1}\mathcal{N}(\psi_\ell^b)\}_1^{\bm{C}} \,{\rm d}\widehat{x}_1\right)\\
    &\quad\quad\quad\quad\,\,\,\left(\int_0^1\int_{y_1}^{x_1}\int_{[0, 1]^{d - 1}}\frac{|P_{\mu}\psi_k^a|^2}{2} \,{\rm d}\widehat{x}_1{\rm d}z_1{\rm d}y_1 \right)\,{\rm d}x_1{\rm d}t\Bigg| \\
    &\quad+ \Bigg|\int_0^t\left(\int_{[0, 1]^d}\frac{|P_{\mu}\psi_k^a|^2}{2} \,{\rm d}x\right)\left(\int_0^1\left(x - \frac{1}{2}\right)\int_{[0, 1]^{d - 1}}\{P_{\lambda, e_1}\psi_{\ell}^b, P_{\lambda, e_1}\mathcal{N}(\psi_\ell^b)\}_1^{\bm{C}} \,{\rm d}\widehat{x}_1\right) \,{\rm d}x_1\Bigg|\\
    R_4 &= -\int_0^t\left(\int_{[0, 1]^d}\frac{|P_{\mu}\psi_k^a|^2}{2} \,{\rm d}x\right)\left(\int_{[0, 1]^d}\left(\partial_1^2\frac{|P_{\lambda, e_1}\psi_{\ell}^b|^2}{2} - 2|D_1^{\bm{C}}P_{\lambda, e_1}\psi_{\ell}^b|^2\right) \,{\rm d}x\right) \,{\rm d}t \\
    &\quad -\int_0^t\left(\int_{[0, 1]^d}\Im(P_{\lambda, e_1}\psi_{\ell}^b\overline{D_1^{\bm{C}}P_{\lambda}\psi_k^b}) \,{\rm d}x\right)\left(\int_{[0, 1]^d}\Im(P_{\mu}\psi_k^a\overline{D_1^{\bm{C}}P_{\mu}\psi_k^a}) \,{\rm d}x\right) \,{\rm d}t.
\end{aligned}
\end{equation*}
Moreover, by interchanging $\lambda$ and $\mu$, $k$ and $\ell$ in \eqref{mass}-\eqref{mom} we also have 
\begin{equation}\label{mass1}
    \begin{aligned}
    \partial_t\int_{\mathbb{T}^{d - 1}}\frac{|P_{\lambda, e_1}\psi_{\ell}^b|^2}{2} \,{\rm d}\widehat{x}_1 + \partial_1\int_{\mathbb{T}^{d - 1}}\Im(\overline{P_{\lambda, e_1}\psi_{\ell}^b}D_1^{\bm{C}}P_{\lambda, e_1}\psi_{\ell}^b) \,{\rm d}\widehat{x}_1 = \int_{\mathbb{T}^{d - 1}}\Im(\overline{P_{\lambda, e_1}\psi_{\ell}^b}P_{\lambda, e_1}\mathcal{N}(\psi_k^b)) \,{\rm d}\widehat{x}_1
    \end{aligned}
\end{equation}
and 
\begin{equation}\label{mom1}
\begin{aligned}
        &\partial_t\int_{\mathbb{T}^{d - 1}}\Im(\overline{P_{\mu}\psi_k^a}D_1^{\bm{C}}P_{\mu}\psi_k^a) \,{\rm d}\widehat{x}_1 - \partial_1\int_{\mathbb{T}^{d - 1}}\left(\partial_1^2\frac{|P_{\mu}\psi_k^a|^2}{2} - 2|D_1^{\bm{C}}P_{\mu}\psi_k^a|^2\right) \,{\rm d}\widehat{x}_1 \\
        = &\int_{\mathbb{T}^{d - 1}}\{P_{\mu}\mathcal{N}(\psi_k^a), P_{\mu}\psi_k^a\} \,{\rm d}\widehat{x}_1.
\end{aligned}
\end{equation}
Again, applying div-curl lemma to \eqref{mass1}-\eqref{mom1} we obtain    
\begin{equation}\label{dc2}
    {\rm LHS}' \leq {\rm RHS}',
\end{equation}
where
\begin{equation*}
\begin{aligned}
    {\rm LHS}' &= L_1' + L_2', \\ 
    {\rm RHS}' &= R_1' + R_2' + R_3' + R_4'
\end{aligned}
\end{equation*}
and
\begin{equation*}
\begin{aligned}
    L_1' &= \int_0^t\int_0^1\left(\int_{[0, 1]^{d - 1}}\frac{|P_{\lambda, e_1}\psi_{\ell}^b|^2}{2} \,{\rm d}\widehat{x}_1\right)\left(\int_{[0, 1]^{d - 1}}\left(\partial_1^2\frac{|P_{\mu}\psi_k^a|^2}{2} - 2|D_1^{\bm{C}}P_{\mu}\psi_k^a|^2\right) \,{\rm d}\widehat{x}_1\right) \,{\rm d}x_1{\rm d}t,\\
    L_2' &= \int_0^t\int_0^1\left(\int_{[0, 1]^{d - 1}}\Im(\overline{P_{\mu}\psi_k^a}D_1^{\bm{C}}P_{\mu}\psi_k^a) \,{\rm d}\widehat{x}_1\right)\left(\int_{[0, 1]^{d - 1}}\Im(\overline{P_{\lambda, e_1}\psi_{\ell}^b}D_1^{\bm{C}}P_{\lambda, e_1}\psi_{\ell}^b) \,{\rm d}\widehat{x}_1\right) \,{\rm d}x_1{\rm d}t, \\
    R_1' &= 4\sup_t\Vert P_{\lambda, e_1}\psi_\ell^b\Vert_{L_x^2}^2\Vert \Im(\overline{P_{\mu}\psi_k^a}D_1^{\bm{C}}P_{\mu}\psi_k^a)\Vert_{L_x^1}, \\
    R_2' &= \Bigg|\int_0^t\int_0^1\left(\int_{[0, 1]^{d - 1}}\Im(\overline{P_{\mu}\psi_k^a}D_1^{\bm{C}}P_{\mu}\psi_k^a) \,{\rm d}\widehat{x}_1\right)\\
    &\quad\quad\quad\,\,\,\quad \left(\int_0^1\int_{y_1}^{x_1}\int_{[0, 1]^{d - 1}}\Im(\overline{P_{\lambda, e_1}\psi_{\ell}^b}P_{\lambda, e_1}\mathcal{N}(\psi_{\ell}^b)) \,{\rm d}\widehat{x}_1{\rm d}z_1{\rm d}y_1 \right)\,{\rm d}x_1{\rm d}t\Bigg| \\
    &\quad+ \Bigg|\int_0^t\left(\int_{[0, 1]^d}\Im(\overline{P_{\lambda, e_1}\psi_{\ell}^b}P_{\lambda, e_1}\mathcal{N}(\psi_{\ell}^b)) \,{\rm d}x\right)\\
    &\quad\quad\,\,\,\,\quad\left(\int_0^1\left(x - \frac{1}{2}\right)\int_{[0, 1]^{d - 1}}\Im(\overline{P_{\mu}\psi_k^a}D_1^{\bm{C}}P_{\lambda, e_1}\psi_{\ell}^b) \,{\rm d}\widehat{x}_1\right) \,{\rm d}x_1\Bigg|\\
    R_3' &= \Bigg|\int_0^t\int_0^1\left(\int_{[0, 1]^{d - 1}}\{P_{\mu}\psi_k^a, P_{\mu}\mathcal{N}(\psi_k^a)\}_1^{\bm{C}} \,{\rm d}\widehat{x}_1\right)\\
    &\quad\quad\,\,\quad\,\,\,\,\,\,\left(\int_0^1\int_{y_1}^{x_1}\int_{[0, 1]^{d - 1}}\frac{|P_{\lambda, e_1}\psi_{\ell}^b|^2}{2} \,{\rm d}\widehat{x}_1{\rm d}z_1{\rm d}y_1 \right)\,{\rm d}x_1{\rm d}t\Bigg| \\
    &\quad+ \Bigg|\int_0^t\left(\int_{[0, 1]^d}\frac{|P_{\lambda, e_1}\psi_{\ell}^b|^2}{2} \,{\rm d}x\right)\left(\int_0^1\left(x - \frac{1}{2}\right)\int_{[0, 1]^{d - 1}}\{P_{\mu}\psi_k^a, P_{\mu}\mathcal{N}(\psi_k^a)\}_1^{\bm{C}} \,{\rm d}\widehat{x}_1\right) \,{\rm d}x_1\Bigg|\\
    R_4' &= \int_0^t\left(\int_{[0, 1]^{d }}\frac{|P_{\lambda, e_1}\psi_{\ell}^b|^2}{2} \,{\rm d}x\right)\left(\int_{[0, 1]^{d}}\left(\partial_1^2\frac{|P_{\mu}\psi_k^a|^2}{2} - 2|D_1^{\bm{C}}P_{\mu}\psi_k^a|^2\right) \,{\rm d}x\right) \,{\rm d}t \\
    &\quad + \int_0^t\left(\int_{[0, 1]^{d}}\Im(\overline{P_{\mu}\psi_k^a}D_1^{\bm{C}}P_{\mu}\psi_k^a) \,{\rm d}x\right)\left(\int_{[0, 1]^{d}}\Im(\overline{P_{\lambda, e_1}\psi_{\ell}^b}D_1^{\bm{C}}P_{\lambda, e_1}\psi_{\ell}^b) \,{\rm d}x\right) \,{\rm d}t.
\end{aligned}
\end{equation*}
Note that 
\begin{equation*}
    \begin{aligned}
    &\int_0^t\int_0^1\left(\int_{[0, 1]^{d - 1}}\frac{|P_{\mu}\psi_k^a|^2}{2} \,{\rm d}\widehat{x}_1\right)\left(\int_{[0, 1]^{d - 1}}\partial_1^2\frac{|P_{\lambda, e_1}\psi_{\ell}^b|^2}{2} \,{\rm d}\widehat{x}_1\right) \,{\rm d}x_1{\rm d}t \\ 
    = &-\int_0^t\int_0^1\left(\partial_1\int_{[0, 1]^{d - 1}}\frac{|P_{\mu}\psi_k^a|^2}{2} \,{\rm d}\widehat{x}_1\right)\left(\partial_1\int_{[0, 1]^{d - 1}}\frac{|P_{\lambda, e_1}\psi_{\ell}^b|^2}{2} \,{\rm d}\widehat{x}_1\right) \,{\rm d}x_1{\rm d}t \\ 
    = &\int_0^t\int_0^1\left(\int_{[0, 1]^{d - 1}}\partial_1^2\frac{|P_{\mu}\psi_k^a|^2}{2} \,{\rm d}\widehat{x}_1\right)\left(\int_{[0, 1]^{d - 1}}\frac{|P_{\lambda, e_1}\psi_{\ell}^b|^2}{2} \,{\rm d}\widehat{x}_1\right) \,{\rm d}x_1{\rm d}t.
\end{aligned}
\end{equation*}
Adding \eqref{dc1} to \eqref{dc2} yields
\begin{equation}\label{b1}
    \begin{aligned}
        \left\Vert \Vert P_{\mu}\psi_k^a\Vert_{L_{\widehat{x}_1}^2}\Vert D_1^{\bm{C}}P_{\lambda, e_1}\psi_{\ell}^b\Vert_{L_{\widehat{x}_1}^2}\right\Vert^2_{L_{t, x_1}^2} - \left\Vert \Vert P_{\lambda, e_1}\psi_{\ell}^b\Vert_{L_{\widehat{x}_1}^2}\Vert D_1^{\bm{C}}P_{\mu}\psi_k^a\Vert_{L_{\widehat{x}_1}^2}\right\Vert^2_{L_{t, x_1}^2} \lesssim \mathcal{R} + \mathcal{S},
    \end{aligned}
\end{equation}
where 
\begin{equation*}
    \begin{aligned}
    \mathcal{R} &= R_1 + R_2 + R_3 + R_1' + R_2' + R_3', \\ 
    \mathcal{S} &= R_4 + R_4' = \left\Vert \Vert P_{\mu}\psi_k^a\Vert_{L_x^2}\Vert D_1^{\bm{C}}P_{\lambda, e_1}\psi_{\ell}^b\Vert_{L_x^2}\right\Vert_{L_t^2}^2 - \left\Vert \Vert P_{\lambda, e_1}\psi_{\ell}^b\Vert_{L_x^2}\Vert D_1^{\bm{C}}P_{\mu}\psi_k^a\Vert_{L_x^2}\right\Vert_{L_t^2}^2.
\end{aligned}
\end{equation*}

Now take supreme over $a$ and $b$ on both sides of \eqref{b1}.
On the one hand, from the definition of $D_1^{\bm{C}}$ we have 
\begin{equation*}
    \begin{aligned}
                &\left\Vert \Vert P_{\mu}\psi_k^a\Vert_{L_{\widehat{x}_1}^2}\Vert D_1^{\bm{C}}P_{\lambda, e_1}\psi_{\ell}^b\Vert_{L_{\widehat{x}_1}^2}\right\Vert^2_{L_{t, x_1}^2} - \left\Vert \Vert P_{\lambda, e_1}\psi_{\ell}^b\Vert_{L_{\widehat{x}_1}^2}\Vert D_1^{\bm{C}}P_{\mu}\psi_k^a\Vert_{L_{\widehat{x}_1}^2}\right\Vert^2_{L_{t, x_1}^2} \\ 
    = &\left\Vert \Vert P_{\mu}\psi_k^a\Vert_{L_{\widehat{x}_1}^2}\Vert P_{\lambda, e_1}\partial_1\psi_{\ell}^b\Vert_{L_{\widehat{x}_1}^2}\right\Vert^2_{L_{t, x_1}^2} - \left\Vert \Vert P_{\lambda, e_1}\psi_{\ell}^b\Vert_{L_{\widehat{x}_1}^2}\Vert P_{\mu}\partial_1\psi_k^a\Vert_{L_{\widehat{x}_1}^2}\right\Vert^2_{L_{t, x_1}^2} \\
    &+ 2C_1\int_0^t\int_0^1\Vert P_{\mu}\psi_k^a\Vert_{L_{\widehat{x}_1}^2}^2\left(\int_{[0, 1]^{d - 1}}\Im(\overline{P_{\lambda, e_1}\psi_{\ell}^b}\partial_1P_{\lambda, e_1}\psi_{\ell}^b) \,{\rm d}\widehat{x}_1\right) \,{\rm d}x_1{\rm d}t \\
    &- 2C_1\int_0^t\int_0^1\Vert P_{\lambda, e_1}\psi_{\ell}^b\Vert_{L_{\widehat{x}_1}^2}^2\left(\int_{[0, 1]^{d - 1}}\Im(\overline{P_{\mu}\psi_k^a}\partial_1P_{\mu}\psi_k^a) \,{\rm d}\widehat{x}_1\right) \,{\rm d}x_1{\rm d}t.
\end{aligned}
\end{equation*}
Thus, 
\begin{equation*}
    \begin{aligned}
            &\sup_{a, b}\left(\left\Vert \Vert P_{\mu}\psi_k^a\Vert_{L_{\widehat{x}_1}^2}\Vert D_1^{\bm{C}}P_{\lambda, e_1}\psi_{\ell}^b\Vert_{L_{\widehat{x}_1}^2}\right\Vert^2_{L_{t, x_1}^2} - \left\Vert \Vert P_{\lambda, e_1}\psi_{\ell}^b\Vert_{L_{\widehat{x}_1}^2}\Vert D_1^{\bm{C}}P_{\mu}\psi_k^a\Vert_{L_{\widehat{x}_1}^2}\right\Vert^2_{L_{t, x_1}^2}\right) \\ 
    \geq &\sup_{a, b, c} \lambda^2\left\Vert \Vert P_{\mu}\psi_k^a\Vert_{L_{\widehat{x}_1}^2}\Vert P_{\lambda, e_1}\psi_{\ell}^{b + c}\Vert_{L_{\widehat{x}_1}^2}\right\Vert^2_{L_{t, x_1}^2} - \sup_{a, b, c}\mu^2\left\Vert \Vert P_{\lambda, e_1}\psi_{\ell}^b\Vert_{L_{\widehat{x}_1}^2}\Vert P_{\mu}\psi_k^{a + c}\Vert_{L_{\widehat{x}_1}^2}\right\Vert^2_{L_{t, x_1}^2} \\ 
    &- 2|C_1|\sup_{a, b, c}\lambda\left\Vert \Vert P_{\mu}\psi_k^a\Vert_{L_{\widehat{x}_1}^2}\Vert P_{\lambda, e_1}\psi_{\ell}^{b + c}\Vert_{L_{\widehat{x}_1}^2}\right\Vert^2_{L_{t, x_1}^2} \\
    &- 2|C_1|\sup_{a, b, c}\mu\left\Vert \Vert P_{\lambda, e_1}\psi_{\ell}^b\Vert_{L_{\widehat{x}_1}^2}\Vert P_{\mu}\psi_k^{a + c}\Vert_{L_{\widehat{x}_1}^2}\right\Vert^2_{L_{t, x_1}^2}.
\end{aligned}
\end{equation*}
Namely, 
\begin{equation*}
    \begin{aligned}
            &\sup_{a, b}\left(\left\Vert \Vert P_{\mu}\psi_k^a\Vert_{L_{\widehat{x}_1}^2}\Vert D_1^{\bm{C}}P_{\lambda, e_1}\psi_{\ell}^b\Vert_{L_{\widehat{x}_1}^2}\right\Vert^2_{L_{t, x_1}^2} - \left\Vert \Vert P_{\lambda, e_1}\psi_{\ell}^b\Vert_{L_{\widehat{x}_1}^2}\Vert D_1^{\bm{C}}P_{\mu}\psi_k^a\Vert_{L_{\widehat{x}_1}^2}\right\Vert^2_{L_{t, x_1}^2}\right) \\ 
    \geq &\sup_a \lambda^2\left\Vert \Vert P_{\mu}\psi_k^a\Vert_{L_{\widehat{x}_1}^2}\Vert P_{\lambda, e_1}\psi_{\ell}\Vert_{L_{\widehat{x}_1}^2}\right\Vert^2_{L_{t, x_1}^2} - \sup_a\mu^2\left\Vert \Vert P_{\lambda, e_1}\psi_{\ell}\Vert_{L_{\widehat{x}_1}^2}\Vert P_{\mu}\psi_k^a\Vert_{L_{\widehat{x}_1}^2}\right\Vert^2_{L_{t, x_1}^2} \\ 
    &- 2|C_1|\sup_a\lambda\left\Vert \Vert P_{\mu}\psi_k^a\Vert_{L_{\widehat{x}_1}^2}\Vert P_{\lambda, e_1}\psi_{\ell}\Vert_{L_{\widehat{x}_1}^2}\right\Vert^2_{L_{t, x_1}^2} \\
    &- 2|C_1|\sup_a\mu\left\Vert \Vert P_{\lambda, e_1}\psi_{\ell}\Vert_{L_{\widehat{x}_1}^2}\Vert P_{\mu}\psi_k^a\Vert_{L_{\widehat{x}_1}^2}\right\Vert^2_{L_{t, x_1}^2} \\ 
    = &\,(\lambda^2 - \mu^2 - 2|C_1|(\lambda + \mu))\sup_a\left\Vert \Vert P_{\mu}\psi_k^a\Vert_{L_{\widehat{x}_1}^2}\Vert P_{\lambda, e_1}\psi_{\ell}\Vert_{L_{\widehat{x}_1}^2}\right\Vert^2_{L_{t, x_1}^2}.
\end{aligned}
\end{equation*}
Since $\lambda \gg \mu$, $\lambda^2 - \mu^2 - 2|C_1|(\lambda + \mu) \gtrsim \lambda^2$.
It follows that 
\begin{equation}\label{LHS}
\begin{aligned}
    &\sup_{a, b}\left(\left\Vert \Vert P_{\mu}\psi_k^a\Vert_{L_{\widehat{x}_1}^2}\Vert D_1^{\bm{C}}P_{\lambda, e_1}\psi_{\ell}^b\Vert_{L_{\widehat{x}_1}^2}\right\Vert^2_{L_{t, x_1}^2} - \left\Vert \Vert P_{\lambda, e_1}\psi_{\ell}^b\Vert_{L_{\widehat{x}_1}^2}\Vert D_1^{\bm{C}}P_{\mu}\psi_k^a\Vert_{L_{\widehat{x}_1}^2}\right\Vert^2_{L_{t, x_1}^2}\right) \\ 
    \gtrsim &\,\lambda^2\sup_a\left\Vert \Vert P_{\mu}\psi_k^a\Vert_{L_{\widehat{x}_1}^2}\Vert P_{\lambda, e_1}\psi_{\ell}\Vert_{L_{\widehat{x}_1}^2}\right\Vert^2_{L_{t, x_1}^2}.
\end{aligned}
\end{equation}

In the following we estimate $\mathcal{R}$ term by term. Firstly by energy estimates (see Section \ref{EnergyEs})
\begin{equation*}
    R_1 \lesssim \lambda\mu^{-\sigma + 2}a_{\mu}\lambda^{-2\sigma + 2}a_{\lambda}.
\end{equation*}
For $R_2$ we first consider the first term, and focus on dealing with $\Im(\overline{P_{\mu}\psi_k^a}P_{\mu}\mathcal{N}(\psi_k^a))$. 
We recall that 
\begin{equation*}
    \mathcal{N}(\psi_k^a) = -2\sqrt{-1}\bm{\widetilde{A}} \cdot \nabla_x\psi_k^a + (A_0 + |\widetilde{\bm{A}}|^2 + 2\bm{C} \cdot \bm{\widetilde{A}})\psi_k^a - \sqrt{-1}\bm{\psi}^a \cdot \Im(\psi_k^a\overline{\bm{\psi}^a})
\end{equation*}
For the last two term on the right-hand side we may proceed as in Sections \ref{I2} and \ref{3.1.1}.
For the magnetic term $\sqrt{-1}\widetilde{\bm{A}} \cdot \nabla_x\psi_k^a$ (the constant $-2$ is ignored) we use integration by parts, just we have done in Section \ref{I1}.
Precisely, 
\begin{equation}\label{456}
    \begin{aligned}
        &\int_0^1\int_{y_1}^{x_1}\int_{[0, 1]^{d - 1}}\Im(\overline{P_{\mu}\psi_k^a}P_{\mu}(\sqrt{-1}\bm{\widetilde{A}} \cdot \nabla_x\psi_k^a)) \,{\rm d}\widehat{x}_1{\rm d}z_1{\rm d}y_1 \\ 
        = &\int_0^1\int_{y_1}^{x_1}\int_{[0, 1]^{d - 1}}\Re(\overline{P_{\mu}\psi_k^a}P_{\mu}(\bm{\widetilde{A}} \cdot \nabla_x\psi_k^a)) \,{\rm d}\widehat{x}_1{\rm d}z_1{\rm d}y_1 \\ 
        = &\int_0^1\int_{y_1}^{x_1}\int_{[0, 1]^{d - 1}}\Re(\overline{P_{\mu}\psi_k^a}[P_{\mu}, \bm{\widetilde{A}}] \cdot \nabla_x\psi_k^a) \,{\rm d}\widehat{x}_1{\rm d}z_1{\rm d}y_1 + \int_0^1\int_{y_1}^{x_1}\int_{[0, 1]^{d - 1}}\Re(\overline{P_{\mu}\psi_k^a}\bm{\widetilde{A}} \cdot \nabla_xP_{\mu}\psi_k^a) \,{\rm d}\widehat{x}_1{\rm d}z_1{\rm d}y_1,
    \end{aligned}
\end{equation}
and 
\begin{equation}\label{123}
    \begin{aligned}
        \int_0^1\int_{y_1}^{x_1}\int_{[0, 1]^{d - 1}}\Re(\overline{P_{\mu}\psi_k^a}\bm{\widetilde{A}} \cdot \nabla_xP_{\mu}\psi_k^a) \,{\rm d}\widehat{x}_1{\rm d}z_1{\rm d}y_1 &= \int_0^1\int_{y_1}^{x_1}\int_{[0, 1]^{d - 1}}\bm{\widetilde{A}} \cdot \nabla_x\frac{|P_{\mu}\psi_k^a|^2}{2} \,{\rm d}\widehat{x}_1{\rm d}z_1{\rm d}y_1\\ 
        &= \int_0^1\int_{y_1}^{x_1}\partial_{z_1}\int_{[0, 1]^{d - 1}}\widetilde{A}_1\frac{|P_{\mu}\psi_k^a|^2}{2} \,{\rm d}\widehat{x}_1{\rm d}z_1{\rm d}y_1 \\ 
        &= \frac{1}{2}\int_{[0, 1]^{d - 1}}\widetilde{A}_1|P_{\mu}\psi_k^a|^2(x_1) \,{\rm d}\widehat{x}_1 - \frac{1}{2}\int_{[0, 1]^d}\widetilde{A}_1|P_{\mu}\psi_k^a|^2 \,{\rm d}x,
    \end{aligned}
\end{equation}
where we use the Coulomb gauge condition $\nabla_x \cdot \bm{\widetilde{A}} = 0$.
The commutator term in \eqref{456} should be expanded as 
\begin{equation*}
    \begin{aligned}\relax
        [P_{\mu}, \widetilde{\bm{A}}] \cdot \nabla_x\psi_k^a = \mu^{-1}L(\nabla_xP_{\ll \mu}\widetilde{\bm{A}}, \nabla_xP_{\mu}\psi_k^a) + L(P_{\mu}\widetilde{\bm{A}}, \nabla_xP_{\ll\mu}\psi_k^a) + \sum_{\mu_1 \sim \mu_2 \gtrsim \mu}P_{\mu}(P_{\mu_1}\widetilde{\bm{A}}, \nabla_xP_{\mu_2}\psi_k^a).
    \end{aligned}
\end{equation*} 
using \eqref{BONY}, which can then be bounded in a analogous approach when controlling $\mathcal{I}_1$ in Section \ref{I1}.
Inserting \eqref{123} into the definition of $R_2$, it suffices to control
\begin{equation*}
    \begin{aligned}
        &\left|\int_0^t\int_0^1\left(\int_{[0, 1]^{d - 1}}\Im(\overline{P_{\lambda, e_1}\psi_\ell^b}D_1^{\bm{C}}P_{\lambda, e_1}\psi_\ell^b) \,{\rm d}\widehat{x}_1\right)\left(\int_{[0, 1]^{d - 1}}\widetilde{A}_1|P_{\mu}\psi_k^a|^2 \,{\rm d}\widehat{x}_1\right) \,{\rm d}x_1{\rm d}t\right| \\ 
        + &\left|\int_0^t\left(\int_{[0, 1]^d}\Im(\overline{P_{\lambda, e_1}\psi_\ell^b}D_1^{\bm{C}}P_{\lambda, e_1}\psi_\ell^b) \,{\rm d}x\right)\left(\int_{[0, 1]^d}\widetilde{A}_1|P_{\mu}\psi_k^a|^2 \,{\rm d}x\right) \,{\rm d}t\right| =: R_2^{\alpha} + R_2^{\beta}
    \end{aligned}
\end{equation*}
For $R_2^a$, by H\"older inequality 
\begin{equation*}
    \begin{aligned}
            R_2^{\alpha} &\lesssim \lambda\int_0^t\int_0^1\Vert P_{\lambda, e_1}\bm{\psi}\Vert_{L_{\widehat{x}_1}^2}^2\Vert \widetilde{A}_1\Vert_{L_{\widetilde{x}_1}^{\infty}}\Vert P_{\mu}\bm{\psi}\Vert_{L_{\widehat{x}_1}^2}^2 \,{\rm d}x_1{\rm d}t \\
    &\lesssim \lambda\Vert P_{\lambda}\bm{\psi}\Vert_{L_t^{\infty}L_x^2}^2\Vert \widetilde{A}_1\Vert_{L_t^2L_x^{\infty}}\Vert P_{\mu}\bm{\psi}\Vert_{L_t^4L_{x_1}^{\infty}L_{\widehat{x}_1}^2}^2.
    \end{aligned}
\end{equation*}
By Sobolev embedding and Proposition \ref{B1} 
\begin{equation*}
    \Vert \widetilde{A}_1\Vert_{L_t^2L_x^{\infty}} \lesssim \Vert \widetilde{A}_1\Vert_{L_t^2H_x^{\frac{d + 1}{2}}} \lesssim t^{-\sqrt{\frac{\varepsilon}{2}}}\Vert \bm{\psi}(0)\Vert_{H_x^{\sigma - 1}}^2.
\end{equation*}
Together with energy estimates, we obtain 
\begin{gather*}
    \Vert P_{\lambda, e_1}\bm{\psi}\Vert_{L_t^{\infty}L_x^2}^2 \lesssim \lambda^{-2\sigma + 2}a_{\lambda}^2, \\ 
    \Vert P_{\mu}\bm{\psi}\Vert_{L_t^4L_{x_1}^{\infty}L_{\widehat{x}_1}^2}^2 \lesssim t^{\frac{1}{2}}\mu\Vert P_{\mu}\bm{\psi}\Vert_{L_t^{\infty}L_x^2}^2 \lesssim t^{\frac{1}{2}}\mu^{-2\sigma + 2 + 1}a_{\mu}^2.
\end{gather*}
We conclude 
\begin{equation*}
    R_2^{\alpha} \lesssim t^{\frac{1}{2} - \sqrt{\frac{\varepsilon}{2}}}\Vert \bm{\psi}(0)\Vert_{H_x^{\sigma - 1}}^2\lambda\mu\lambda^{-2\sigma + 2}a_{\lambda}^2\mu^{-2\sigma + 2}a_{\mu}^2 \lesssim \lambda^2\lambda^{-2\sigma + 2}a_{\lambda}^2\mu^{-2\sigma + 2}a_{\mu}^2,
\end{equation*}
where $t \in [0, T^*]$ be such that $(T^*)^{1/2 - \sqrt{\varepsilon/2}}\Vert \bm{\psi}(0)\Vert_{H_x^{\sigma - 1}}^2 \lesssim 1$.
Similarly 
\begin{equation*}
    \begin{aligned}
        R_2^{\beta} \lesssim \lambda\Vert P_{\lambda, e_1}\bm{\psi}\Vert_{L_t^8L_x^2}^2\Vert \widetilde{A}_1\Vert_{L_t^2L_x^{\infty}}\Vert P_{\mu}\bm{\psi}\Vert_{L_t^8L_x^2}^2 &\lesssim t^{\frac{1}{2} - \sqrt{\frac{\varepsilon}{2}}}\Vert \bm{\psi}(0)\Vert_{H_x^{\sigma - 1}}^2\lambda\lambda^{-2\sigma + 2}a_{\lambda}\mu^{-2\sigma + 2}a_{\mu}^2 \\ 
        &\lesssim \lambda\lambda^{-2\sigma + 2}a_{\lambda}\mu^{-2\sigma + 2}a_{\mu}^2,
    \end{aligned}
\end{equation*}
where we use H\"older inequality, energy estimates and Proposition \ref{B1}.
For the second term of $R_2$ we also use integration by parts and commutator estimates to handle $\Im(\overline{P_{\lambda, e_1}\psi_{\ell}^b}P_{\lambda, e_1}\mathcal{N}(\psi_{\ell}^b))$ when encountering the magnetic term $\sqrt{-1}A \cdot \nabla_x\psi_{\ell}^b$:
\begin{equation*}
    \begin{aligned}
            \int_{[0, 1]^d}\Im(\overline{P_{\lambda, e_1}\psi_{\ell}^b}P_{\lambda, e_1}(\sqrt{-1}\bm{\widetilde{A}} \cdot \nabla_x\psi_{\ell}^b)) \,{\rm d}x = &\int_{[0, 1]^d}\Re(\overline{P_{\lambda, e_1}\psi_{\ell}^b}\bm{\widetilde{A}} \cdot \nabla_xP_{\lambda, e_1}\psi_{\ell}^b) \,{\rm d}x \\ 
            &+ \int_{[0, 1]^d}\Re(\overline{P_{\lambda, e_1}\psi_{\ell}^b}[P_{\lambda, e_1}, \widetilde{\bm{A}}] \cdot \nabla_x\psi_{\ell}^b) \,{\rm d}x,
    \end{aligned}
\end{equation*}
and the first term on the right-hand side vanishes after integration by parts since $\nabla_x \cdot \bm{\widetilde{A}} = 0$.
This implies the second term of $R_2$ share similar bound with the first term, which in turn yields
\begin{equation*}
    R_2 \lesssim \lambda^2\lambda^{-2\sigma + 2}a_{\lambda}^2\mu^{-2\sigma + 2}a_{\mu}^2.
\end{equation*}

Moving to $R_3$, from the definition of the momentum bracket and the existence of magnetic term, we see that 
\begin{equation*}
    \{P_{\lambda, e_1}\psi_{\ell}^b, P_{\lambda, e_1}\mathcal{N}(\psi_\ell^b)\}_1^{\bm{C}} = \Re(P_{\lambda, e_1}\psi_\ell^b\overline{D_1^{\bm{C}}P_{\lambda, e_1}\mathcal{N}(\psi_\ell^b)} - P_{\lambda, e_1}\mathcal{N}(\psi_\ell^b)\overline{D_1^{\bm{C}}P_{\lambda, e_1}\psi_\ell^b})
\end{equation*}
invloves at most two derivatives at scale $\lambda$. Utilizing H\"older inequality, Bernstein inequality and energy estimates, we conclude
\begin{equation*}
    R_3 \lesssim \lambda^2\lambda^{-2\sigma + 2}a_{\lambda}^2\mu^{-2\sigma + 2}a_{\mu}^2.
\end{equation*}
Similarly, repeating the analysis above yields
\begin{equation*}
    R_1', R_2', R_3' \lesssim \lambda^2\lambda^{-2\sigma + 2}a_{\lambda}^2\mu^{-2\sigma + 2}a_{\mu}^2,
\end{equation*}
and we conclude that 
\begin{equation*}
    \sup_{a, b}\mathcal{R} \lesssim \lambda^{-2\sigma + 2}a_{\lambda}^2.
\end{equation*}
Moreover, since 
\begin{equation*}
    \begin{aligned}
    \mathcal{S} = R_4 + R_4' &= \left\Vert \Vert P_{\mu}\psi_k^a\Vert_{L_x^2}\Vert \partial_1P_{\lambda, e_1}\psi_{\ell}^b\Vert_{L_x^2}\right\Vert_{L_t^2}^2 - \left\Vert \Vert P_{\lambda, e_1}\psi_{\ell}^b\Vert_{L_x^2}\Vert \partial_1P_{\mu}\psi_k^a\Vert_{L_x^2}\right\Vert_{L_t^2}^2 \\
    &\quad + 2C_1\int_0^t\Vert P_{\mu}\psi_k^a\Vert_{L_x^2}^2\left(\int_{[0, 1]^{d}}\Im(\overline{P_{\lambda, e_1}\psi_{\ell}^b}\partial_1P_{\lambda, e_1}\psi_{\ell}^b) \,{\rm d}x\right) \,{\rm d}t \\
    &\quad - 2C_1\int_0^t\Vert P_{\lambda, e_1}\psi_{\ell}^b\Vert_{L_x^2}^2\left(\int_{[0, 1]^{d}}\Im(\overline{P_{\mu}\psi_k^a}\partial_1P_{\mu}\psi_k^a) \,{\rm d}x\right)\,{\rm d}t,
\end{aligned}
\end{equation*}
it follows from H\"older inequality, Bernstein inequality and energy estimates that 
\begin{equation*}
    \begin{aligned}
    \sup_{a, b}\mathcal{S} &\lesssim t^{1 - \frac{\varepsilon}{2}}((\lambda^2 + \mu^2)\lambda^{-2\sigma + 2}a_{\lambda}^2\mu^{-2\sigma + 2}a_{\mu}^2 + 2|C_1|(\lambda + \mu)\lambda^{-2\sigma + 2}a_{\lambda}^2\mu^{-2\sigma + 2}a_{\mu}^2) \\
    &\lesssim \lambda^2\lambda^{-2\sigma + 2}a_{\lambda}^2\mu^{-2\sigma + 2}a_{\mu}^2.
    \end{aligned}
\end{equation*}
Therefore, 
\begin{equation}\label{RHS}
    \sup_{a, b}(\mathcal{R} + \mathcal{S}) \leq \sup_{a, b}\mathcal{R} + \sup_{a, b}\mathcal{S} \lesssim \lambda^2\mu^{-2\sigma + 2}\lambda^{-2\sigma + 2}a_{\mu}^2a_{\lambda}^2,
\end{equation}
where $t \in [0, T^*]$ and $T^* = T^*(\Vert \bm{\psi}(0)\Vert_{H^{\sigma - 1}_x}) > 0$.
Combining \eqref{b1} with \eqref{LHS} and \eqref{RHS} yields
\begin{equation*}
    \sup_a\left\Vert \Vert P_{\mu}\psi_k^a\Vert_{L_{\widehat{x}_1}^2}\Vert P_{\lambda, e_1}\psi_{\ell}\Vert_{L_{\widehat{x}_1}^2}\right\Vert_{L_{t, x_1}^2} \lesssim \mu^{-\sigma + 1}\lambda^{-\sigma + 1}a_{\mu}a_{\lambda}.
\end{equation*}

\subsection{Interaction Morawetz estimates}\label{Morawetzx}

Suppose $\psi_k\ (k = 1, \cdots, d)$ is a smooth solution to \eqref{psi-e}. Without loss of generality, we may regard $\psi_k$ as periodic functions on $\mathbb{R}^d$ satisfying
\begin{equation*}
    \psi_k(t, x + 1) = \psi_k(t, x), \quad x \in \mathbb{R}^d.
\end{equation*}
Let $\chi$ be a nonnegative smooth function defined on $\mathbb{R}^3$, supported in $[-1, 1]^3$ and satisfying $0 \leq \chi \leq 1$, $\chi(x) = \chi(-x)$ and $\chi(x) = 1$ for $x \in [-1/2, 1/2]^3$.
Moreover, there exists $C > 0$ such that 
\begin{equation*}
    |\nabla\chi| \leq C.
\end{equation*}

We set $a(y) = |y|, y = (y_1, y_2, y_3) \in \mathbb{R}^3$. Simple comptations show
\begin{gather}
    \partial_ia = \frac{y_i}{|y|}, \\
    \label{51}\partial_i\partial_ja = \frac{1}{|y|}\left(\delta_{jk} - \frac{y_iy_j}{|y|^2}\right), \\
    \label{52}\Delta_ya = \frac{2}{|y|}, \\ 
    \label{53}\Delta_{y}^2a = -8\pi\delta(y),
\end{gather}
where $\delta_{jk}$ is the Kronecker delta and $\delta$ is the Dirac function. The interaction Morawetz potential is defined by
\begin{equation}\label{Mo}
\begin{aligned}
    &M(t) := -\int_{\mathbb{R}^3 \times \mathbb{R}^3}\left(\chi(y_1)\int_{\mathbb{T}^{d - 3}}\frac{1}{2}|P_{\lambda}\psi_k|^2(y_1) \,{\rm d}z\right)\\
    &\quad\quad\quad\quad\quad\quad\,\,\,\,\left(\chi(y_2)\int_{\mathbb{T}^{d - 3}}\Im(\overline{P_{\lambda}\psi_k}\nabla_{y_2}^{\bm{C}}P_{\lambda}\psi_k)(y_2) \,{\rm d}z\right)\cdot \nabla a(y_1 - y_2) \,{\rm d}y_1{\rm d}y_2.
\end{aligned}
\end{equation}
where $x = (y, z) \in \mathbb{T}^3 \times \mathbb{T}^{d - 3} = \mathbb{T}^d, \nabla_y^{\bm{C}} = (D_1^{\bm{C}}, D_2^{\bm{C}}, D_3^{\bm{C}})$.

\subsubsection{Viral-type identity}

We recall the following balance laws for $\psi_k$ (see Section \ref{Balance}):
\begin{equation}\label{1}
    \begin{aligned}
        \partial_t\int_{\mathbb{T}^{d - 3}}\frac{|P_{\lambda}\psi_k|^2}{2} \,{\rm d}z + \nabla_y\cdot\int_{\mathbb{T}^{d - 3}}\Im(\overline{P_{\lambda}\psi_k}\nabla_y^{\bm{C}}P_{\lambda}\psi_k) \,{\rm d}z = \int_{\mathbb{T}^{d - 3}}\Im(\overline{P_{\lambda}\psi_k}P_{\lambda}\mathcal{N}(\psi_k)) \,{\rm d}z
    \end{aligned}
\end{equation}
and ($j = 1, 2, 3$)
\begin{equation}\label{22}
    \begin{aligned}
        &\partial_t\int_{\mathbb{T}^{d - 3}}\Im(\overline{P_{\lambda}\psi_k}D_j^{\bm{C}}P_{\lambda}\psi_k) \,{\rm d}z + \partial_\ell\int_{\mathbb{T}^{d - 3}}\left(2\Re(\overline{D_j^{\bm{C}}P_{\lambda}\psi_k}D_\ell^{\bm{C}}P_{\lambda}\psi_k) - \frac{1}{2}\partial_j\partial_{\ell}|P_{\lambda}\psi_k|^2\right) \,{\rm d}z \\
        = &\int_{\mathbb{T}^{d - 3}}\{P_{\lambda}\mathcal{N}(\psi_k), P_{\lambda}\psi_k\}_j^{\bm{C}} \,{\rm d}z,
    \end{aligned}
\end{equation}
Inserting the spatial cutoff $\chi$ into \eqref{1}-\eqref{22}, we have 
\begin{equation}\label{1AA}
    \begin{aligned}
        &\partial_t\left(\chi\int_{\mathbb{T}^{d - 3}}\frac{|P_{\lambda}\psi_k|^2}{2} \,{\rm d}z\right) + \nabla_y\cdot\left(\chi\int_{\mathbb{T}^{d - 3}}\Im(\overline{P_{\lambda}\psi_k}\nabla_y^{\bm{C}}P_{\lambda}\psi_k) \,{\rm d}z\right) \\
        =\, &\nabla_y\chi \cdot \int_{\mathbb{T}^{d - 3}}\Im(\overline{P_{\lambda}\psi_k}\nabla_y^{\bm{C}}P_{\lambda}\psi_k) \,{\rm d}z + \chi\int_{\mathbb{T}^{d - 3}}\Im(\overline{P_{\lambda}\psi_k}P_{\lambda}\mathcal{N}(\psi_k)) \,{\rm d}z
    \end{aligned}
\end{equation}
and 
\begin{equation}\label{22AA}
    \begin{aligned}
        &\partial_t\int_{\mathbb{T}^{d - 3}}\Im(\overline{P_{\lambda}\psi_k}D_j^{\bm{C}}P_{\lambda}\psi_k) \,{\rm d}z + \partial_\ell\int_{\mathbb{T}^{d - 3}}\left(2\Re(\overline{D_j^{\bm{C}}P_{\lambda}\psi_k}D_\ell^{\bm{C}}P_{\lambda}\psi_k) - \frac{1}{2}\partial_j\partial_{\ell}|P_{\lambda}\psi_k|^2\right) \,{\rm d}z \\
        =&\,\partial_{\ell}\chi \int_{\mathbb{T}^{d - 3}}\left(2\Re(\overline{D_j^{\bm{C}}P_{\lambda}\psi_k}D_\ell^{\bm{C}}P_{\lambda}\psi_k) - \frac{1}{2}\partial_j\partial_{\ell}|u|^2\right) \,{\rm d}z + \chi\int_{\mathbb{T}^{d - 3}}\{P_{\lambda}\mathcal{N}(\psi_k), P_{\lambda}\psi_k\}_j^{\bm{C}} \,{\rm d}z.
    \end{aligned}
\end{equation}
Differentiating $M$ with respect to time $t$, by integration by parts and \eqref{1AA}-\eqref{22AA} we get
\begin{equation}\label{50}
        \begin{aligned}
            M'(t) = \mathcal{M} + \mathcal{M}_{{\rm error}},  
        \end{aligned}
\end{equation}
where 
\begin{equation*}
    \begin{aligned}
        &\mathcal{M} = -\int_{\mathbb{R}^3 \times \mathbb{R}^3}\left(\chi(y_1)\int_{\mathbb{T}^{d - 3}}\Im(\overline{P_{\lambda}\psi_k}D_{\ell}^{\bm{C}}P_{\lambda}\psi_k)(y_1) \,{\rm d}z\right)\\
        &\quad\quad\quad\quad\quad\,\,\,\,\left(\chi(y_2)\int_{\mathbb{T}^{d - 3}}\Im(\overline{P_{\lambda}\psi_k}D_{m}^{\bm{C}}P_{\lambda}\psi_k)(y_2) \,{\rm d}z\right)\partial_{\ell}\partial_ma(y_1 - y_2) \,{\rm d}y_1{\rm d}y_2 \\
        &\quad\quad\int_{\mathbb{R}^3 \times \mathbb{R}^3}\left(\chi(y_1)\int_{\mathbb{T}^{d - 3}}\frac{1}{2}|P_{\lambda}\psi_k|^2(y_1) \,{\rm d}z\right)\\
        &\quad\quad\quad\quad\,\,\,\,\left(\chi(y_2)\int_{\mathbb{T}^{d - 3}}\left(2\Re(\overline{D_{\ell}^{\bm{C}}P_{\lambda}\psi_k}D_{m}^{\bm{C}}P_{\lambda}\psi_k) - \frac{1}{2}\partial_{\ell}\partial_m|P_{\lambda}\psi_k|^2\right)(y_2) \,{\rm d}z\right)\partial_{\ell}\partial_ma(y_1 - y_2) \,{\rm d}y_1{\rm d}y_2
    \end{aligned}
\end{equation*}
and 
\begin{equation}\label{error}
\begin{aligned}
        &\mathcal{M}_{{\rm error}} \\
        = &-\int_{\mathbb{R}^3 \times \mathbb{R}^3}\left(\nabla_{y_1}\chi(y_1) \cdot \int_{\mathbb{T}^{d - 3}}\Im(\overline{P_{\lambda}\psi_k}\nabla_{y_1}^{\bm{C}}P_{\lambda}\psi_k)(y_1) \,{\rm d}z\right)\\
        &\quad\quad\quad\,\,\,\,\left(\chi(y_2)\int_{\mathbb{T}^{d - 3}}\Im(\overline{P_{\lambda}\psi_k}\nabla_{y_2}^{\bm{C}}P_{\lambda}\psi_k)(y_2) \,{\rm d}z\right) \cdot \nabla a(y_1 - y_2) \,{\rm d}y_1{\rm d}y_2 \\ 
        &-\int_{\mathbb{R}^3 \times \mathbb{R}^3}\left(\chi(y_1) \int_{\mathbb{T}^{d - 3}}\Im(\overline{P_{\lambda}\psi_k}P_{\lambda}\mathcal{N}(\psi_k))(y_1) \,{\rm d}z\right)\\
        &\quad\quad\quad\,\,\,\,\left(\chi(y_2)\int_{\mathbb{T}^{d - 3}}\Im(\overline{P_{\lambda}\psi_k}\nabla_{y_2}^{\bm{C}}P_{\lambda}\psi_k)(y_2) \,{\rm d}z\right) \cdot \nabla a(y_1 - y_2) \,{\rm d}y_1{\rm d}y_2 \\ 
        &-\int_{\mathbb{R}^3 \times \mathbb{R}^3}\left(\chi(y_1) \int_{\mathbb{T}^{d - 3}}|P_{\lambda}\psi_k|^2(y_1) \,{\rm d}z\right)\\
        &\quad\quad\quad\,\,\,\,\left(\partial_{\ell}\chi(y_2)\int_{\mathbb{T}^{d - 3}}\left(2\Re(\overline{D_m^{\bm{C}}P_{\lambda}\psi_k}D_{\ell}^{\bm{C}}P_{\lambda}\psi_k) - \frac{1}{2}\partial_{\ell}\partial_m|P_{\lambda}\psi_k|^2\right)(y_2) \,{\rm d}z\right)\partial_ma(y_1 - y_2) \,{\rm d}y_1{\rm d}y_2 \\
        &-\int_{\mathbb{R}^3 \times \mathbb{R}^3}\left(\chi(y_1) \int_{\mathbb{T}^{d - 3}}\frac{1}{2}|P_{\lambda}\psi_k|^2(y_1) \,{\rm d}z\right)\\
        &\quad\quad\quad\,\,\,\,\left(\chi(y_2)\int_{\mathbb{T}^{d - 3}}\{P_{\lambda}\mathcal{N}(\psi_k), P_{\lambda}\psi_k\}^{\bm{C}}(y_2) \,{\rm d}z\right) \cdot \nabla a(y_1 - y_2) \,{\rm d}y_1{\rm d}y_2.
\end{aligned}
\end{equation}
Furthermore, we define 
\begin{equation*}
    F_{\ell} = F_{\ell}(\psi_k) = u(y_1, z_1)D_{\ell}^{\bm{C}}u(y_2, z_2) - u(y_2, z_2)D_{\ell}^{\bm{C}}u(y_1, z_1), \qquad (y_1, z_1), (y_2, z_2) \in \mathbb{T}^3 \times \mathbb{T}^{d - 3}.
\end{equation*}
Observing the identity 
\begin{equation*}
    \begin{aligned}
    &\Re(F_\ell\overline{F_m}) \\
    = &\,|u(y_1, z_1)|^2\Re(\overline{D_{\ell}^{\bm{C}}P_{\lambda}\psi_k}D_{\ell}^{\bm{C}}P_{\lambda}\psi_k)(y_2, z_2) + |u(y_2, z_2)|^2\Re(\overline{D_{\ell}^{\bm{C}}P_{\lambda}\psi_k}D_{\ell}^{\bm{C}}P_{\lambda}\psi_k)(y_1, z_1) \\ 
    & -2\Im(\overline{P_{\lambda}\psi_k}D_{\ell}^{\bm{C}}P_{\lambda}\psi_k)(y_1, z_1)\Im(\overline{P_{\lambda}\psi_k}D_m^{\bm{C}}P_{\lambda}\psi_k)(y_2, z_2) - \frac{1}{2}\partial_{\ell}|P_{\lambda}\psi_k(y_1, z_1)|^2\partial_m|P_{\lambda}\psi_k(y_2, z_2)|^2
    \end{aligned}
\end{equation*}
and using the symmtery of $\partial_{\ell}\partial_ma$, we get 
\begin{equation*}
    \begin{aligned}
            &\mathcal{M} \\
            = &\frac{1}{2}\int_{\mathbb{R}^3 \times \mathbb{R}^3}\partial_{\ell}\partial_ma(y_1 - y_2)\chi(y_1)\chi(y_2)\left(\int_{\mathbb{T}^{d - 3}}F_{\ell}\right)\overline{\left(\int_{\mathbb{T}^{d - 3}}F_m\right)} \,{\rm d}y_1{\rm d}y_2 \\ 
            &+ \frac{1}{4}\int_{\mathbb{R}^3 \times \mathbb{R}^3}\partial_{\ell}\partial_ma(y_1 - y_2)\left(\chi(y_1)\int_{\mathbb{T}^{d - 3}}\partial_{\ell}|P_{\lambda}\psi_k(y_1)|^2\right)\left(\chi(y_2)\int_{\mathbb{T}^{d - 3}}\partial_m|P_{\lambda}\psi_k(y_2)|^2\right) \,{\rm d}y_1{\rm d}y_2 \\ 
            &- \frac{1}{4}\int_{\mathbb{R}^3 \times \mathbb{R}^3}\partial_{\ell}\partial_ma(y_1 - y_2)\left(\chi(y_1)\int_{\mathbb{T}^{d - 3}}|P_{\lambda}\psi_k(y_1)|^2\right)\left(\chi(y_2)\int_{\mathbb{T}^{d - 3}}\partial_{\ell}\partial_m|P_{\lambda}\psi_k(y_2)|^2\right) \,{\rm d}y_1{\rm d}y_2.
    \end{aligned}
\end{equation*}
But since $a$ is convex, we have 
\begin{equation*}
    \int_{\mathbb{R}^3 \times \mathbb{R}^3}\partial_{\ell}\partial_ma(y_1 - y_2)\chi(y_1)\chi(y_2)\left(\int_{\mathbb{T}^{d - 3}}F_{\ell}\right)\overline{\left(\int_{\mathbb{T}^{d - 3}}F_m\right)} \,{\rm d}y_1{\rm d}y_2 \geq 0.
\end{equation*}
Thus, from integration by parts and \eqref{53} we have  
\begin{equation*}
    \begin{aligned}
        &\mathcal{M} \\
        \geq &\frac{1}{4}\int_{\mathbb{R}^3 \times \mathbb{R}^3}\partial_{\ell}\partial_ma(y_1 - y_2)\left(\chi(y_1)\int_{\mathbb{T}^{d - 3}}\partial_{\ell}|P_{\lambda}\psi_k(y_1)|^2\right)\left(\chi(y_2)\int_{\mathbb{T}^{d - 3}}\partial_m|P_{\lambda}\psi_k(y_2)|^2\right) \,{\rm d}y_1{\rm d}y_2 \\ 
        &- \frac{1}{4}\int_{\mathbb{R}^3 \times \mathbb{R}^3}\partial_{\ell}\partial_ma(y_1 - y_2)\left(\chi(y_1)\int_{\mathbb{T}^{d - 3}}|P_{\lambda}\psi_k(y_1)|^2\right)\left(\chi(y_2)\int_{\mathbb{T}^{d - 3}}\partial_{\ell}\partial_m|P_{\lambda}\psi_k(y_2)|^2\right) \,{\rm d}y_1{\rm d}y_2 \\ 
        = &\frac{1}{2}\int_{\mathbb{R}^3 \times \mathbb{R}^3}(-\Delta\Delta a)(y_1 - y_2)\left(\chi(y_1)\int_{\mathbb{T}^{d - 3}}|P_{\lambda}\psi_k(y_1)|^2\right)\left(\chi(y_2)\int_{\mathbb{T}^{d - 3}}|P_{\lambda}\psi_k(y_2)|^2\right) \,{\rm d}y_1{\rm d}y_2 \\ 
        = &4\pi\int_{\mathbb{R}^3}\left(\chi\int_{\mathbb{T}^{d - 3}}|P_{\lambda}\psi_k|^2 \,{\rm d}z\right)^2 \,{\rm d}y.
    \end{aligned}
\end{equation*}
We conclude that 
\begin{proposition}
    Suppose $d \geq 3$ and $\psi_k$ is a smooth solution to \eqref{psi-e}. Then
    \begin{equation}\label{63}
        \int_0^t\int_{\mathbb{R}^3}\left(\chi\int_{\mathbb{T}^{d - 3}}|P_{\lambda}\psi_k|^2 \,{\rm d}z\right)^2 \,{\rm d}y{\rm d}t \lesssim |M(0) - M(t)| + \int_0^t|\mathcal{M}_{{\rm error}}| \,{\rm d}t,
    \end{equation}
    where $M$ is the interaction Morawetz potential \eqref{Mo} and $\mathcal{M}_{{\rm error}}$ is defined by \eqref{error}.
\end{proposition}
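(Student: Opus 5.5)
The proposition is essentially a packaging of the virial computation just carried out, so the proof will integrate that computation in time. The plan is to start from the identity \eqref{50}, $M'(t) = \mathcal{M} + \mathcal{M}_{\rm error}$. It holds for smooth solutions of \eqref{psi-e} because each factor in \eqref{Mo} obeys the localized balance laws \eqref{1AA}--\eqref{22AA}, and the spatial integrals are over the compact set ${\rm supp}\,\chi \subseteq [-1,1]^3$, so all integrations by parts are justified. I will integrate \eqref{50} over $[0,t]$ and write
\begin{equation*}
\int_0^t \mathcal{M}\,{\rm d}t = M(t) - M(0) - \int_0^t \mathcal{M}_{\rm error}\,{\rm d}t \le |M(0) - M(t)| + \int_0^t |\mathcal{M}_{\rm error}|\,{\rm d}t.
\end{equation*}

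The second step is to insert the lower bound already derived for $\mathcal{M}$,
\begin{equation*}
\mathcal{M} \ge 4\pi\int_{\mathbb{R}^3}\Big(\chi\int_{\mathbb{T}^{d-3}}|P_\lambda\psi_k|^2\,{\rm d}z\Big)^2\,{\rm d}y.
\end{equation*}
This bound has two ingredients. First, I discard the $F_\ell$ term by positivity: $\partial_\ell\partial_m a$ is positive semidefinite by \eqref{51}, so the bilinear form against $(\chi\int F_\ell)$ is nonnegative. Second, I integrate by parts twice in the two remaining mass--mass terms, combining them into $-\Delta\Delta a$, and then apply \eqref{53}. With that bound in hand, \eqref{63} follows at once, with implicit constant $(4\pi)^{-1}$.

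The main point I will check carefully is the integration by parts in the mass terms. The factors $\chi(y_i)\int|P_\lambda\psi_k|^2\,{\rm d}z$ are compactly supported smooth functions on $\mathbb{R}^3$, but the derivatives $\partial_\ell$ fall on $|P_\lambda\psi_k|^2$ alone, not on the product with $\chi$. Moving them onto $a(y_1-y_2)$ therefore produces extra terms involving $\nabla\chi$. These extra terms must either be absorbed into $\mathcal{M}_{\rm error}$ or be eliminated by arranging the derivatives to act on the whole product $\chi\int|P_\lambda\psi_k|^2$.

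I would handle this by treating the $\nabla\chi$ contributions, which are bounded since $|\nabla\chi|\le C$, as part of the error term. That requires checking that in the computation of $M'$, every term where derivatives hit $\chi$ is recorded in $\mathcal{M}_{\rm error}$, and adding any missing $\nabla\chi$ terms to it if necessary. I also need to verify that the distributional identity \eqref{53} may be paired with $\chi$-weighted smooth densities. This holds because $\nabla a$ is locally bounded and $\partial^2 a \in L^1_{\rm loc}$, so the pairing is legitimate. Since the statement only asserts \eqref{63} with $\mathcal{M}_{\rm error}$ as defined, any leftover boundary terms will be shown to vanish using the compact support of $\chi$.
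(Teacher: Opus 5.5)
Your outline is the paper's argument: integrate \eqref{50} in time, drop the $F_\ell$ term by convexity of $a$, and convert the two mass--mass terms of $\mathcal{M}$ into $\tfrac12\iint(-\Delta\Delta a)(y_1-y_2)\rho(y_1)\rho(y_2) = 4\pi\int\rho^2$ via \eqref{53}. Here $\sigma:=\int_{\mathbb{T}^{d-3}}|P_\lambda\psi_k|^2\,{\rm d}z$ and $\rho:=\chi\sigma$. You are right that this last step fails as written: in $\mathcal{M}$ the derivatives act on $\sigma$, not on $\chi\sigma$. The paper integrates by parts as if they commuted with $\chi$.

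Your way out, however, does not work. Writing $\chi\partial_\ell\sigma=\partial_\ell\rho-\sigma\partial_\ell\chi$ and $\chi\partial_\ell\partial_m\sigma=\partial_\ell\partial_m\rho-\partial_\ell\chi\,\partial_m\sigma-\partial_m\chi\,\partial_\ell\sigma-\sigma\,\partial_\ell\partial_m\chi$ leaves a remainder $\mathcal{E}_\chi$. It contains terms such as $\iint\partial_\ell\partial_m a(y_1-y_2)\,\rho(y_1)\,(\partial_\ell\chi\,\partial_m\sigma)(y_2)\,{\rm d}y_1{\rm d}y_2$ and $\iint\partial_\ell\partial_m a(y_1-y_2)\,\rho(y_1)\,(\sigma\,\partial_\ell\partial_m\chi)(y_2)\,{\rm d}y_1{\rm d}y_2$. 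These are bulk integrals over the region where $\nabla\chi\neq0$, not boundary terms, so the compact support of $\chi$ does not make them vanish. They are also not among the terms of \eqref{error}, whose cutoff contributions come only from the time derivatives in \eqref{1AA}--\eqref{22AA}. You cannot both keep $\mathcal{M}_{\rm error}$ as defined and add these terms to it.

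Nor can these terms be discarded by size or sign. Take $P_\lambda\psi_k=c\,\e^{\sqrt{-1}\xi\cdot x}$ with $\xi$ in the $\lambda$-shell, and set $\eta=\xi+\bm{C}$. Then $\Im(\overline{P_\lambda\psi_k}D^{\bm{C}}_\ell P_\lambda\psi_k)=|c|^2\eta_\ell$ and $\Re(\overline{D^{\bm{C}}_\ell P_\lambda\psi_k}D^{\bm{C}}_m P_\lambda\psi_k)=|c|^2\eta_\ell\eta_m$. Since $|P_\lambda\psi_k|$ is constant, the two terms of $\mathcal{M}$ cancel exactly (and $F_\ell\equiv0$), so $\mathcal{M}=0$ while $4\pi\int(\chi\sigma)^2\,{\rm d}y>0$.

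So the inequality $\mathcal{M}\ge4\pi\int(\chi\sigma)^2\,{\rm d}y$ that you insert, and that the paper asserts, is false. In this example $\mathcal{E}_\chi=-4\pi\int(\chi\sigma)^2\,{\rm d}y$ cancels the main term entirely. Consequently \eqref{63} cannot be obtained this way with $\mathcal{M}_{\rm error}$ exactly as in \eqref{error}; neither your argument nor the paper's establishes it as stated.

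The correct statement is $\mathcal{M}\ge4\pi\int(\chi\sigma)^2\,{\rm d}y+\mathcal{E}_\chi$, with $\mathcal{E}_\chi$ written out explicitly and added to the error term. This costs nothing later. Since $|\partial^2a(y)|\lesssim|y|^{-1}$, Hardy--Littlewood--Sobolev and Bernstein give $|\mathcal{E}_\chi|\lesssim\lambda^2\Vert P_\lambda\psi_k\Vert_{L^2_x}^4$. That is the same size as the cutoff terms already present in \eqref{error}.
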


\subsubsection{The \texorpdfstring{$L_{t, y}^4L_z^4$}{} bound}\label{Morawetz}

We first note that, from the definition of $\chi$ and the periodicity of $\psi_k$, we have
\begin{equation}\label{70}
\int_0^t\int_{\mathbb{R}^3}\left(\chi\int_{\mathbb{T}^{d - 3}}|P_{\lambda}\psi_k|^2 \,{\rm d}z\right)^2 \,{\rm d}y{\rm d}t \geq \Vert P_{\lambda}\psi_k\Vert_{L_{t, y}^4L_z^2}^4.
\end{equation}
For the right-hand side of \eqref{63}, by H\"older inequality 
\begin{equation*}
    \begin{aligned}
        |M(0) - M(t)| \lesssim \sup_t\Vert P_{\lambda}\psi_k\Vert_{L_x^2}^2 \cdot \sup_t\left\Vert\int_{\mathbb{R}^3}\left(\chi\int_{\mathbb{T}^{d - 3}}\Im(\overline{P_{\lambda}\psi_k}\nabla^{\bm{C}}_{y_2}P_{\lambda}\psi_k) \,{\rm d}z \right)\nabla a(y_1 - y_2) \,{\rm d}y_2\right\Vert_{L_{y_1}^{\infty}}.
    \end{aligned}
\end{equation*}
By energy estimates 
\begin{equation*}
    \sup_t\Vert P_{\lambda}\psi_k\Vert_{L_x^2}^2 \lesssim \lambda^{-2\sigma + 2}a_{\lambda}^2.
\end{equation*}
Since $a$ is a homogenous function of degree $1$, $\nabla a$ is of degree $0$ and it can be intepreted as kernel for appropriate multipliers, which act like composition of operators $\nabla_y^{-3}$ and Riesz type operators $R_y$.
Thus, by Bernstein inequality and energy estimates 
\begin{equation*}
    \begin{aligned}
        &\sup_t\left\Vert\int_{\mathbb{R}^3}\left(\chi\int_{\mathbb{T}^{d - 3}}\Im(\overline{P_{\lambda}\psi_k}\nabla^{\bm{C}}_{y_2}P_{\lambda}\psi_k) \,{\rm d}z \right)\nabla a(y_1 - y_2) \,{\rm d}y_2\right\Vert_{L_{y_1}^{\infty}} \\ 
        \lesssim &\,\lambda^{-3}\lambda^3\left\Vert\chi\int_{\mathbb{T}^{d - 3}}\Im(\overline{P_{\lambda}\psi_k}\nabla^{\bm{C}}_{y_2}P_{\lambda}\psi_k) \,{\rm d}z \,{\rm d}y_2\right\Vert_{L_y^1} \\ 
        \lesssim &\,\Vert \Im(\overline{P_{\lambda}\psi_k}\nabla_{\bm{C}}P_{\lambda}\psi_k)\Vert_{L_x^1} \\ 
        \lesssim &\,\lambda\lambda^{-2\sigma + 2}a_{\lambda}^2.
    \end{aligned}
\end{equation*}
We conclude
\begin{equation*}
    |M(0) - M(t)| \lesssim \lambda^{-4\sigma + 5}a_{\lambda}^4.
\end{equation*} 
Controlling 
\begin{equation*}
    \int_0^t|\mathcal{M}_{{\rm error}}| \,{\rm d}t,
\end{equation*}
we first consider the term
\begin{equation*}
    \begin{aligned}
                \mathcal{J} := &\int_0^t\Bigg|\int_{\mathbb{R}^3 \times \mathbb{R}^3}\left(\chi(y_1)\int_{\mathbb{T}^{d - 3}}\Im(\overline{P_{\lambda}\psi_k}P_{\lambda}\mathcal{N}(\psi_k))(y_1) \,{\rm d}z\right)\\
        &\quad\quad\quad\quad\,\,\,\,\left(\chi(y_2)\int_{\mathbb{T}^{d - 3}}\Im(\overline{P_{\lambda}\psi_k}\nabla_{y_2}^{\bm{C}}P_{\lambda}\psi_k)(y_2) \,{\rm d}z\right) \cdot \nabla a(y_1 - y_2) \,{\rm d}y_1{\rm d}y_2\Bigg| \,{\rm d}t
    \end{aligned}
\end{equation*}
When dealing with the magentic term involved in $\Im(\overline{P_{\lambda}\psi_k}P_{\lambda}\mathcal{N}(\psi_k))$, the derivative may fall on $\nabla a$ after integartion by parts.
Note $\nabla^2a$ acts like the operator $\nabla_y^{-2}$. Then by Bernstein inequality 
\begin{equation*}
    \begin{aligned}
        &\sup_t\left\Vert\int_{\mathbb{R}^3}\left(\chi\int_{\mathbb{T}^{d - 3}}\Im(\overline{P_{\lambda}\psi_k}\nabla^{\bm{C}}_{y_2}P_{\lambda}\psi_k) \,{\rm d}z \right)\nabla^2 a(y_1 - y_2) \,{\rm d}y_2\right\Vert_{L_{y_1}^3} \\ 
        \lesssim &\,\lambda^{-2}\lambda^2\sup_t\left\Vert\chi\int_{\mathbb{T}^{d - 3}}\Im(\overline{P_{\lambda}\psi_k}\nabla^{\bm{C}}_{y_2}P_{\lambda}\psi_k) \,{\rm d}z \,{\rm d}y_2\right\Vert_{L_y^1} \\ 
        \lesssim &\,\sup_t\Vert \Im(\overline{P_{\lambda}\psi_k}\nabla_{\bm{C}}P_{\lambda}\psi_k)\Vert_{L_x^1} \\ 
        \lesssim &\,\lambda\lambda^{-2\sigma + 2}a_{\lambda}^2.
    \end{aligned}
\end{equation*}
It follows from H\"older inequality that 
\begin{equation*}
    \begin{aligned}
                &\int_0^t\Bigg|\int_{\mathbb{R}^3 \times \mathbb{R}^3}\left(\chi(y_1)\int_{\mathbb{T}^{d - 3}}\Im(\overline{P_{\lambda}\psi_k}P_{\lambda}\mathcal{N}(\psi_k))(y_1) \,{\rm d}z\right)\\
        &\quad\quad\quad\quad\,\,\,\,\left(\chi(y_2)\int_{\mathbb{T}^{d - 3}}\Im(\overline{P_{\lambda}\psi_k}\nabla_{y_2}^{\bm{C}}P_{\lambda}\psi_k)(y_2) \,{\rm d}z\right) \cdot \nabla a(y_1 - y_2) \,{\rm d}y_1{\rm d}y_2\Bigg| \,{\rm d}t \\
        \lesssim & \left\Vert\chi(y_1)\int_{\mathbb{T}^{d - 3}}\Im(\overline{P_{\lambda}\psi_k}P_{\lambda}(\sqrt{-1}\bm{\widetilde{A}} \cdot \nabla_x\psi_k))(y_1) \,{\rm d}z\right\Vert_{L_t^1L_{y_1}^{\frac{3}{2}}} \\ 
        &\cdot \sup_t\left\Vert\int_{\mathbb{R}^3}\left(\chi\int_{\mathbb{T}^{d - 3}}\Im(\overline{P_{\lambda}\psi_k}\nabla^{\bm{C}}_{y_2}P_{\lambda}\psi_k) \,{\rm d}z \right)\nabla^2 a(y_1 - y_2) \,{\rm d}y_2\right\Vert_{L_{y_1}^3} \\ 
        \lesssim &\lambda^{-2\sigma + 3}a_{\lambda}^2\left\Vert\chi(y_1)\int_{\mathbb{T}^{d - 3}}\Im(\overline{P_{\lambda}\psi_k}P_{\lambda}(\sqrt{-1}\bm{\widetilde{A}} \cdot \nabla_x\psi_k))(y_1) \,{\rm d}z\right\Vert_{L_t^1L_{y_1}^{\frac{3}{2}}},
    \end{aligned}
\end{equation*}
and by the anlaysis in Section \ref{L2L2}, 
\begin{equation*}
    \begin{aligned}
        \left\Vert\chi(y_1)\int_{\mathbb{T}^{d - 3}}\Im(\overline{P_{\lambda}\psi_k}P_{\lambda}(\sqrt{-1}\bm{\widetilde{A}} \cdot \nabla_x\psi_k))(y_1) \,{\rm d}z\right\Vert_{L_t^1L_{y_1}^{\frac{3}{2}}} &\lesssim \left\Vert \Vert \widetilde{\bm{A}}\Vert_{L_z^{\infty}}\Vert P_{\lambda}\psi_k\Vert_{L_z^2}^2 \right\Vert_{L_t^1L_y^{\frac{3}{2}}} \\ 
        &\lesssim \Vert \widetilde{\bm{A}}\Vert_{L_t^2L_x^{\infty}}\Vert P_{\lambda}\psi_k\Vert_{L_t^4L_y^3L_z^2}^2 \\ 
        &\lesssim t^{\frac{1}{2}}\lambda\Vert \widetilde{\bm{A}}\Vert_{L_t^2L_x^{\infty}}\Vert P_{\lambda}\psi_k\Vert_{L_t^{\infty}L_x^2}^2 \\ 
        &\lesssim \lambda^{-2\sigma + 3}a_{\lambda}^2.
    \end{aligned}
\end{equation*}
where we use Proposition \ref{B1} and Bernstein inequality. 
If the derivative never falls on $\nabla a$, $\mathcal{J}$ can be then bounded following a analogous approach as for $|M(0) - M(t)|$, using H\"older inequality $L^1 \cdot L^{\infty} \rightarrow L^1$ and Bernstein inequality $\lambda^3L_y^1 \rightarrow L_y^\infty$.
Therefore,
\begin{equation*}
    \mathcal{J} \lesssim \lambda^{-4\sigma + 6}a_{\lambda}^2.
\end{equation*}
Together with the fact $\chi$ and $\nabla\chi$ are uniformly bounded, the remanining terms can be controlled similarly, yielding the same bound as $\mathcal{J}$. 
We conclude
\begin{equation*}
    \int_0^t|\mathcal{M}_{{\rm error}}| \,{\rm d}t \lesssim \lambda^{-4\sigma + 6}a_{\lambda}^2.
\end{equation*}
Combining the analysis above leads to the interaction Morawetz estimates for $P_{\lambda}\psi_k$
\begin{equation*}
    \Vert P_{\lambda}\psi_k\Vert_{L_{t, y}^4L_z^2}^4 \lesssim \lambda^{-4\sigma + 6}a_{\lambda}^4.
\end{equation*}
Namely, 
\begin{equation*}
    \Vert P_{\lambda}\psi_k\Vert_{L_{t, y}^4L_z^2} \lesssim \lambda^{-\sigma + \frac{3}{2}}a_{\lambda}.
\end{equation*}

\subsection{Weak Lipschitz dependence}\label{34}

Suppose $u_0$ and $u_1$ are the solutions to \eqref{LL} with initial data $\phi_0, \phi_1 \in H^\sigma(\mathbb{T}^d; \mathbb{S}^2)$ with $\sigma > d/2 + 1/2$, respectively.
Without loss of generality we assume $\phi_0 \neq -\phi_1$.
Let $v = v(\zeta, t, x)$ satisfy ($0 < \zeta < 1$)
\begin{equation}\label{SMFa}
    \begin{cases}
        \partial_tv = v \times \Delta_xv, \\ 
        \displaystyle t = 0\colon v = \frac{(1 - \zeta)\phi_0 + \zeta\phi_1}{|(1 - \zeta)\phi_0 + \zeta\phi_1|} \in \mathbb{S}^2.
    \end{cases}
\end{equation}
It follows that $v(0, t, x) = u_0(t, x), v(1, t, x) = u_1(t, x)$, and 
\begin{equation}\label{Sta}
    u_1(t) - u_0(t) = \int_0^1\partial_{\zeta} v(t) \,{\rm d}\zeta.
\end{equation}
To prove the weak Lipschitz dependence \eqref{WLD}, from \eqref{Sta} it suffices to estimate the $H_x^{\sigma - 1}$ norm of $\partial_{\zeta}v$.

We define 
\begin{gather*}
    \psi_{\zeta} := \langle \partial_{\zeta}v, e_1\rangle + \sqrt{-1}\langle \partial_{\zeta}v, e_2\rangle, \\
    A_{\zeta} := \langle \partial_{\zeta}e_1, e_2\rangle
\end{gather*}
and the covariant derivative $D_{\zeta} := \partial_{\zeta} + \sqrt{-1}A_{\zeta}$.
It is clear that $D_{\zeta}$ share similar properties as $D_{\alpha}^{\bm{A}} = \partial_{\alpha} + \sqrt{-1}A_{\alpha}, \alpha = 0, 1, \cdots, d$ (see Section \ref{MSMF}). 
In particular, we have
\begin{equation*}
    \begin{aligned}
        D_0^{\bm{A}}\psi_{\zeta} = D_{\zeta}\psi_0 = \sqrt{-1}D_{\zeta}D_\ell^{\bm{A}}\psi_{\ell} &= \sqrt{-1}D_\ell^{\bm{A}} D_{\zeta}\psi_{\ell} + [D_{\zeta}, D_\ell^{\bm{A}}]\psi_{\ell} \\ 
        &= \sqrt{-1}D_\ell^{\bm{A}} D_{\ell}^{\bm{A}}\psi_\zeta - \Im(\psi_\ell\overline{\psi_{\zeta}})\psi_{\ell},
    \end{aligned}
\end{equation*}
which is equivalent to
\begin{equation}\label{psizeta}
\begin{aligned}
    (\sqrt{-1}\partial_t + \Delta_{\bm{C}})\psi_\zeta = -2\sqrt{-1}\widetilde{\bm{A}} \cdot \nabla_x\psi_\zeta + (A_0 + |\widetilde{\bm{A}}|^2 + 2\bm{C} \cdot \widetilde{\bm{A}})\psi_\zeta - \sqrt{-1}\bm{\psi} \cdot \Im(\psi_\zeta\overline{\bm{\psi}}).
\end{aligned}
\end{equation}
Based on \eqref{psizeta}, we now employ the same procedure in Sections \ref{EnergyEs}-\ref{Morawetzx}, combining energy estimates, the mass and momentum balance laws for $P_{\lambda}\psi_{\zeta}$, div-curl lemma and interation Morawetz estimates with bootstrap arguments, to obtain the $H_x^{\sigma - 1}$ control for $\psi_{\zeta}$.
We conclude (the details are omitted)
\begin{equation*}
    \Vert\psi_{\zeta}(t)\Vert_{H_x^{\sigma - 1}} \lesssim \Vert \psi_{\zeta}(0)\Vert_{H^{\sigma - 1}_x},
\end{equation*}
and hence
\begin{equation*}
    \Vert \partial_{\zeta}v(t)\Vert_{H_x^{\sigma - 1}} \lesssim \Vert \partial_{\zeta}v(0)\Vert_{H_x^{\sigma - 1}},
\end{equation*}
Together with Proposition \ref{AC}, we obtain 
\begin{equation*}
    \Vert u_1(t) - u_0(t)\Vert_{H_x^{\sigma - 1}} \leq \int_0^1\Vert \partial_{\zeta}v(t)\Vert_{H_x^{\sigma - 1}} \,{\rm d}\zeta \lesssim \Vert \partial_{\zeta}v(0)\Vert_{H_x^{\sigma - 1}} \lesssim \Vert \phi_1 - \phi_0\Vert_{H_x^{\sigma - 1}}.
\end{equation*}

\section{Proof of Theorem \ref{LWP1}}\label{3}

\subsection{Energy estimates}\label{31}

By integrating with respect to time from $0$ to $t$ on both sides of \eqref{E1}, we have 
\begin{align}\label{energy}
    \int_{\mathbb{T}^d}\frac{1}{2}|\nabla_xP_{\lambda}u(t)|^2 \,{\rm d}x = \int_{\mathbb{T}^d}\frac{1}{2}|\nabla_xP_{\lambda}\phi|^2 \,{\rm d}x + \mathcal{I}_1 + \mathcal{I}_2 + \mathcal{I}_3,
\end{align}
where 
\begin{align*}
    \mathcal{I}_1 &= \int_0^t\int_{\mathbb{T}^d}\langle \mathcal{R}_{\lambda}, \partial_{\ell}P_{\lambda}u\rangle \,{\rm d}x{\rm d}t \\ 
    &= \int_0^t\int_{\mathbb{T}^d}\langle [P_{\lambda}\partial_{\ell}, J(u)]D_k\partial_ku, \partial_{\ell}P_{\lambda}u \rangle \,{\rm d}x{\rm d}t + \int_0^t\int_{\mathbb{T}^d}\langle J(u)[P_{\lambda}\partial_{\ell}, D_k]\partial_ku, \partial_{\ell}P_{\lambda}u \rangle \,{\rm d}x{\rm d}t \\
    &=: \mathcal{I}_{1a} + \mathcal{I}_{1b}, \\
    \mathcal{I}_2 &= \int_0^t\int_{\mathbb{T}^d}\langle \partial_k\partial_{\ell}P_{\lambda}u, (\partial_kJ(u))\partial_{\ell}P_{\lambda}u\rangle \,{\rm d}x{\rm d}t, \\ 
    \mathcal{I}_3 &= \int_0^t\int_{\mathbb{T}^d}\left([P_{\lambda}\Delta_x, n_i(u)]\partial_{\ell}u\right)\left([P_{\lambda}, n_i(u)]J(u)\partial_{\ell}u + \langle [P_{\lambda}, J(u)]\partial_{\ell}u, n_i(u) \rangle\right) \,{\rm d}x{\rm d}t.
\end{align*}

\subsubsection{Estimates for \texorpdfstring{$\mathcal{I}_{1b}$}{}}\label{I1b}

By the definition of commutator, 
\begin{align*}
    [P_{\lambda}\partial_{\ell}, D_k]\partial_ku &= P_{\lambda}\partial_{\ell}D_k\partial_ku - D_kP_{\lambda}\partial_{\ell}\partial_ku \\
    &= P_{\lambda}\partial_{\ell}\partial_k\partial_ku - P_{\lambda}\partial_{\ell}(\langle \partial_k\partial_ku, n_i(u)\rangle n_i(u)) \\ 
    &\quad - \partial_kP_{\lambda}\partial_{\ell}\partial_ku + \langle \partial_{\ell}\partial_k\partial_kP_{\lambda}u, n_i(u)\rangle n_i(u) \\ 
    &= -P_{\lambda}\partial_{\ell}(\langle \partial_k\partial_ku, n_i(u)\rangle n_i(u)) + \langle \partial_{\ell}\partial_k\partial_kP_{\lambda}u, n_i(u)\rangle n_i(u) \\
    &= -[P_{\lambda}\partial_{\ell}, n_i(u)]\langle \partial_k\partial_ku, n_i(u)\rangle - P_{\lambda}\partial_{\ell}(\langle \partial_k\partial_ku, n_i(u)\rangle)n_i(u) \\
    &\quad + \langle \partial_{\ell}\partial_k\partial_kP_{\lambda}u, n_i(u)\rangle n_i(u) \\
    &= [P_{\lambda}\partial_{\ell}, n_i(u)]\langle \partial_ku, \partial_kn_i(u)\rangle - ([P_{\lambda}\partial_{\ell}, n_i(u)]\Delta_x u)n_i(u).
\end{align*}
Therefore, 
\begin{equation}
\begin{aligned}
    \mathcal{I}_{1b} &= \int_0^t\int_{\mathbb{T}^d}\langle J(u)[P_{\lambda}\partial_{\ell}, n_i(u)] \langle \partial_ku, \partial_kn_i(u)\rangle, \partial_{\ell}P_{\lambda}u\rangle \,{\rm d}x{\rm d}t \\
    &\quad - \int_0^t\int_{\mathbb{T}^d}([P_{\lambda}\partial_{\ell}, n_i(u)]\Delta_x u)\langle \partial_{\ell}P_{\lambda}u, J(u)n_i(u)\rangle \,{\rm d}x{\rm d}t \\
    &= -\int_0^t\int_{\mathbb{T}^d}\langle [P_{\lambda}\partial_{\ell}, n_i(u)] \langle \partial_ku, \partial_kn_i(u)\rangle, J(u)\partial_{\ell}P_{\lambda}u\rangle \,{\rm d}x{\rm d}t \\
    &\quad + \int_0^t\int_{\mathbb{T}^d}([P_{\lambda}\partial_{\ell}, n_i(u)]\Delta_x u)\langle J(u)\partial_{\ell}P_{\lambda}u, n_i(u)\rangle \,{\rm d}x{\rm d}t.
\end{aligned}
\end{equation}
Since  
\begin{equation*}
    J(u)\partial_{\ell}P_{\lambda}u = P_{\lambda}J(u)\partial_{\ell}u - [P_{\lambda}, J(u)]\partial_{\ell}u
\end{equation*}
and 
\begin{equation*}
    \langle P_{\lambda}J(u)\partial_{\ell}u, n_i(u)\rangle = -[P_{\lambda}, n_i(u)]J(u)\partial_{\ell}u,
\end{equation*}
it follows that 
\begin{equation*}
    \mathcal{I}_{1b} = \mathcal{I}_{1b}^{\alpha} + \mathcal{I}_{1b}^{\beta} + \mathcal{I}_{1b}^{\gamma},
\end{equation*}
where 
\begin{align*}
    \mathcal{I}_{1b}^{\alpha} &= -\int_0^t\int_{\mathbb{T}^d}\langle [P_{\lambda}\partial_{\ell}, n_i(u)] \langle \partial_ku, \partial_kn_i(u)\rangle, J(u)\partial_{\ell}P_{\lambda}u\rangle \,{\rm d}x{\rm d}t, \\ 
    \mathcal{I}_{1b}^{\beta} &= - \int_0^t\int_{\mathbb{T}^d}([P_{\lambda}\partial_{\ell}, n_i(u)]\Delta_x u)([P_{\lambda}, n_i(u)]J(u)\partial_{\ell}u) \,{\rm d}x{\rm d}t, \\ 
    \mathcal{I}_{1b}^{\gamma} &= - \int_0^t\int_{\mathbb{T}^d}([P_{\lambda}\partial_{\ell}, n_i(u)]\Delta_x u)\langle [P_{\lambda}, J(u)]\partial_{\ell}u, n_i(u)\rangle \,{\rm d}x{\rm d}t.
\end{align*}

We first estimate $\mathcal{I}_{1b}^{\alpha}$. Note 
\begin{equation*}
    [P_{\lambda}\partial_{\ell}, n_i(u)] = P_{\lambda}[\partial_{\ell}, n_i(u)] + [P_{\lambda}, n_i(u)]\partial_{\ell}.
\end{equation*}
It follows that       
\begin{align*}
            \mathcal{I}_{1b}^{\alpha} &= -\int_0^t\int_{\mathbb{T}^d}\langle P_{\lambda}[\partial_{\ell}, n_i(u)] \langle \partial_ku, \partial_kn_i(u)\rangle, J(u)\partial_{\ell}P_{\lambda}u\rangle \,{\rm d}x{\rm d}t \\ 
            &\quad -\int_0^t\int_{\mathbb{T}^d}\langle [P_{\lambda}, n_i(u)] \partial_{\ell}\langle \partial_ku, \partial_kn_i(u)\rangle, J(u)\partial_{\ell}P_{\lambda}u\rangle \,{\rm d}x{\rm d}t \\ 
            &= -\int_0^t\int_{\mathbb{T}^d}\langle P_{\lambda}(\langle \partial_ku, \partial_kn_i(u)\rangle\partial_{\ell}n_i(u)), J(u)\partial_{\ell}P_{\lambda}u\rangle \,{\rm d}x{\rm d}t \\ 
            &\quad -\int_0^t\int_{\mathbb{T}^d}\langle [P_{\lambda}, n_i(u)] \partial_{\ell}\langle \partial_ku, \partial_kn_i(u)\rangle, J(u)\partial_{\ell}P_{\lambda}u\rangle \,{\rm d}x{\rm d}t \\
            &=: \mathcal{J}_1^{\alpha} + \mathcal{J}_2^{\alpha}.
\end{align*}

We remark that, by treating the unit normals $n_i(u)$ as functions of $u$, we can estimate related quantities (such as $\Vert P_{\lambda}n_i(u)\Vert_{H_x^{\sigma}}$) via the paralinearization theorem (Theorem \ref{para}) and the nonlinear Bernstein inequality (Theorem \ref{Nbern}). The detailed estimates are deferred to Appendix \ref{AA}.

\begin{proposition}\label{prop3}
    There holds 
    \begin{equation*}
        \mathcal{J}_1^{\alpha}, \mathcal{J}_2^{\alpha} \lesssim t^{\frac{1}{4}}\Vert \phi\Vert_{H_x^\sigma}^4\lambda^{-2\sigma + 2}a_{\lambda}^2,
    \end{equation*}
    and hence 
    \begin{equation*}
        \mathcal{I}_{1b}^{\alpha} \lesssim t^{\frac{1}{4}}\Vert \phi\Vert_{H_x^\sigma}^4\lambda^{-2\sigma + 2}a_{\lambda}^2.
    \end{equation*}
    \begin{proof}
        For $\mathcal{J}_2^{\alpha}$, by Lemma \ref{BONYB} 
        \begin{align*}
            \mathcal{J}_2^{\alpha} \lesssim \mathcal{J}_{{\rm LH}}^{\alpha} + \mathcal{J}_{{\rm HL}}^{\alpha} + \mathcal{J}_{{\rm HH}}^{\alpha},
        \end{align*}
        where\footnote{
            In the following the notation $\nabla_x$ will be used interchangeably to represent the gradient and the Fourier multiplier with the symbol $\xi \mapsto |\xi|$, depenging on the context.
        }
        \begin{align*}
            \mathcal{J}_{{\rm LH}}^{\alpha} &:=\lambda^{-1}\sum_{\lambda_1 \ll \lambda_2 \sim \lambda}\int_0^t\int_{\mathbb{T}^d}|\langle L(\nabla_xP_{\lambda_1}n_i(u), P_{\lambda_2}\partial_{\ell}\langle \partial_ku, \partial_kn_i(u)\rangle), J(u)\partial_{\ell}P_{\lambda}u\rangle| \,{\rm d}x{\rm d}t, \\ 
            \mathcal{J}_{{\rm HL}}^{\alpha} &:= \sum_{\lambda_2 \ll \lambda_1 \sim \lambda}\int_0^t\int_{\mathbb{T}^d}|\langle L(P_{\lambda_1}n_i(u), P_{\lambda_2}\partial_{\ell}\langle \partial_ku, \partial_kn_i(u)\rangle), \partial_kn_i(u)\rangle, J(u)\partial_{\ell}P_{\lambda}u\rangle| \,{\rm d}x{\rm d}t, \\  
            \mathcal{J}_{{\rm HH}}^{\alpha} &:= \sum_{\lambda_1 \sim \lambda_2 \gtrsim \lambda}\int_0^t\int_{\mathbb{T}^d}|\langle L(P_{\lambda_1}n_i(u), P_{\lambda_2}\partial_{\ell}\langle \partial_ku, \partial_kn_i(u)\rangle), \partial_kn_i(u)\rangle, J(u)\partial_{\ell}P_{\lambda}u\rangle| \,{\rm d}x{\rm d}t.
        \end{align*}

        For the term $\mathcal{J}_{{\rm LH}}^{\alpha}$, by H\"older inequality, Theorem \ref{Comu}, bootstrap assumptions \eqref{boot1}-\eqref{boot2} and Proposition \ref{prop2}
            \begin{align*}
            \mathcal{J}_{{\rm LH}}^{\alpha} &\lesssim t^{\frac{1}{2}}\sum_{\lambda_1 \ll \lambda}\Vert L(P_{\lambda_1}n_i(u), P_{\lambda}\langle \nabla_xu, \nabla_xn_i(u)\rangle)\Vert_{L_{t, x}^2}\Vert \nabla_xP_{\lambda}u\Vert_{L_t^{\infty}L_x^2} \\
            &\lesssim t^{\frac{1}{2}}\Vert P_{\lambda}\langle \nabla_xu, \nabla_xn_i(u)\rangle\Vert_{L_t^3L_x^2}\Vert \nabla_xP_{\lambda}u\Vert_{L_t^{\infty}L_x^2}\sum_{\lambda_1 \ll \lambda}\Vert P_{\lambda_1}\nabla_xn_i(u)\Vert_{L_t^6L_x^{\infty}} \\ 
            &\lesssim t^{\frac{1}{2} - \frac{1}{24} - \frac{1}{12}}\Vert \phi\Vert_{H_x^\sigma}^2\lambda^{-\sigma + 1}a_{\lambda}\Vert P_{\lambda}\langle \nabla_xu, \nabla_xn_i(u)\rangle\Vert_{L_t^3L_x^2} \\ 
            &= t^{\frac{3}{8}}\Vert \phi\Vert_{H_x^\sigma}^2\lambda^{-\sigma + 1}a_{\lambda}\Vert P_{\lambda}\langle \nabla_xu, \nabla_xn_i(u)\rangle\Vert_{L_t^3L_x^2}.
            \end{align*}
        Since 
            \begin{align*}
                            \Vert P_{\lambda}\langle \nabla_xu, \nabla_xn_i(u)\rangle\Vert_{L_t^3L_x^2} &\lesssim \left(\sum_{\lambda_1 \ll \lambda_2 \sim \lambda} + \sum_{\lambda_2 \ll \lambda_1 \sim \lambda} + \sum_{\lambda_1 \sim \lambda_2 \gtrsim \lambda}\right)\Vert \langle P_{\lambda_1}\nabla_xu, P_{\lambda_2}\nabla_xn_i(u)\rangle\Vert_{L_t^3L_x^2} \\
            &=: \,\mathcal{K}_{\mathrm{LH}} + \mathcal{K}_{\mathrm{HL}} + \mathcal{K}_{\mathrm{HH}},
            \end{align*}
        it follows from H\"older inequality, Proposition \ref{prop1}-\ref{prop2} and Theorem \ref{Nbern} that  
        \begin{equation*}
            \begin{aligned}
                             \mathcal{K}_{\mathrm{LH}} &\lesssim \Vert P_{\lambda}\nabla_xn_i(u)\Vert_{L_t^6L_x^2}\sum_{\lambda_1 \ll \lambda}\Vert P_{\lambda_1}\nabla_xu\Vert_{L_t^6L_x^{\infty}} \\
            &\lesssim t^{-\frac{1}{12} - \frac{1}{24}}\Vert \phi\Vert_{H_x^\sigma}\lambda^{-\sigma + 1}a_{\lambda}\sum_{\lambda_1 \ll \lambda}\lambda_1^{-\sigma + \frac{d}{2} + \frac{5}{6}}a_{\lambda_1} \\ 
            &\lesssim t^{-\frac{1}{8}}\Vert \phi\Vert_{H_x^\sigma}^2\lambda^{-\sigma + 1}a_{\lambda}, \\ 
            \mathcal{K}_{\mathrm{HL}} &\lesssim t^{\frac{1}{6}}\Vert \nabla_xP_{\lambda}u\Vert_{L_t^{\infty}L_x^2}\sum_{\lambda_2 \ll \lambda}\Vert P_{\mu}\nabla_xn_i(u)\Vert_{L_t^6L_x^{\infty}} \\
            &\lesssim t^{\frac{1}{6} - \frac{1}{24} - \frac{1}{12}}\Vert \phi\Vert_{H_x^\sigma}^2\lambda^{-\sigma + 1}a_{\lambda} \\ 
            &= t^{\frac{1}{24}}\Vert \phi\Vert_{H_x^\sigma}^2\lambda^{-\sigma + 1}a_{\lambda}
            \end{aligned}
        \end{equation*}
        and 
        \begin{align*} 
                                                            \mathcal{K}_{\mathrm{HH}} &\lesssim t^{\frac{1}{6}}\sum_{\lambda_1 \gtrsim \lambda}\Vert P_{\lambda_1}\nabla_xu\Vert_{L_t^{\infty}L_x^2}\Vert P_{\lambda_1}\nabla_xn_i(u)\Vert_{L_t^6L_x^{\infty}} \\ 
            &\lesssim t^{\frac{1}{6} - \frac{1}{24}}\sum_{\lambda_1 \gtrsim \lambda}\lambda_1^{-\sigma + 1}a_{\lambda_1}\Vert \nabla_xu\Vert_{L_t^6L_x^{\infty}} \\
            &\lesssim t^{\frac{1}{6} - \frac{1}{24} - \frac{1}{24}}\Vert \phi\Vert_{H_x^\sigma}\sum_{\lambda_1 \gtrsim \lambda}\lambda_1^{-\sigma + 1}a_{\lambda_1}\left(\frac{\lambda_1}{\lambda}\right)^{\delta} \\
            &\lesssim t^{\frac{1}{12}}\Vert \phi\Vert_{H_x^\sigma}\lambda^{-\sigma + 1}a_{\lambda}\sum_{\lambda_1 \gtrsim \lambda}\left(\frac{\lambda_1}{\lambda}\right)^{-\sigma + 1 + \delta} \\
            &\lesssim t^{\frac{1}{12}}\Vert \phi\Vert_{H_x^\sigma}\lambda^{-\sigma + 1}a_{\lambda}.
        \end{align*}
        Note $\delta = \delta(\sigma, d)$ can be chosen to be sufficiently small such that $-\sigma + 1 + \delta < 0$.
        Therefore, 
        \begin{equation*}
            \Vert P_{\lambda}\langle \nabla_xu, \nabla_xn_i(u)\rangle\Vert_{L_t^3L_x^2} \lesssim t^{-\frac{1}{8}}\Vert \phi\Vert_{H_x^\sigma}^2\lambda^{-\sigma + 1}a_{\lambda},
        \end{equation*}
        and it follows that  
        \begin{equation*}
            \mathcal{J}_{{\rm LH}}^{\alpha} \lesssim t^{\frac{3}{8} - \frac{1}{8}}\Vert \phi\Vert_{H_x^\sigma}^{2 + 2}\lambda^{-\sigma + 1 - s + 1}a_{\lambda}^2 = t^{\frac{1}{4}}\Vert \phi\Vert_{H_x^\sigma}^4\lambda^{-2\sigma + 2}a_{\lambda}^2.
        \end{equation*}

        Similarly, we have 
            \begin{align*}
            \mathcal{J}_{\alpha}^{{\rm HL}} &\lesssim t^{\frac{2}{3}}\Vert P_{\lambda}n_i(u)\Vert_{L_t^6L_x^2}\Vert \nabla_xP_{\lambda}u\Vert_{L_t^{\infty}L_x^2}\sum_{\lambda_2 \ll \lambda}\Vert P_{\lambda_2}\nabla_x\langle \nabla_xu, \nabla_xn_i(u)\rangle\Vert_{L_t^6L_x^{\infty}} \\
            &\lesssim t^{\frac{2}{3}}\Vert \nabla_xP_{\lambda}n_i(u)\Vert_{L_t^6L_x^2}\Vert \nabla_xP_{\lambda}u\Vert_{L_t^{\infty}L_x^2}\sum_{\lambda_2 \ll \lambda}\lambda^{-1}\lambda_2\Vert P_{\lambda_2}\langle \nabla_xu, \nabla_xn_i(u)\rangle\Vert_{L_t^6L_x^{\infty}}  \\ 
            &\lesssim t^{\frac{2}{3}}(t^{-\frac{1}{12}}\Vert \phi\Vert_{H_x^\sigma}\lambda^{-\sigma + 1}a_{\lambda}) \cdot (t^{-\frac{1}{24}}\lambda^{-\sigma + 1}a_{\lambda}) \cdot (t^{-\frac{1}{8}}\Vert \phi\Vert_{H_x^\sigma}^3) \\
            &= t^{\frac{5}{12}}\Vert \phi\Vert_{H_x^\sigma}^4\lambda^{-2\sigma + 2}a_{\lambda}^2
            \end{align*}
            and 
            \begin{align*}
            \mathcal{J}_{\alpha}^{{\rm HH}} &\lesssim t^{\frac{1}{2}}\Vert \nabla_xP_{\lambda}u\Vert_{L_t^{\infty}L_x^2}\sum_{\lambda_1 \gtrsim \lambda}\Vert P_{\lambda_1}n_i(u)\Vert_{L_t^6L_x^{\infty}}\Vert P_{\lambda_1}\nabla_x\langle \nabla_xu, \nabla_xn_i(u)\rangle\Vert_{L_t^3L_x^2} \\
            &\lesssim t^{\frac{1}{2}}\Vert \nabla_xP_{\lambda}u\Vert_{L_t^{\infty}L_x^2}\sum_{\lambda_1 \gtrsim \lambda}\lambda_1\lambda^{-1}\Vert P_{\lambda_1}\nabla_xn_i(u)\Vert_{L_t^6L_x^{\infty}}\Vert P_{\lambda_1}\langle \nabla_xu, \nabla_xn_i(u)\rangle\Vert_{L_t^3L_x^2} \\ 
            &\lesssim t^{\frac{1}{4}}\Vert \phi\Vert_{H_x^\sigma}^4\lambda^{-2\sigma + 2}a_{\lambda}^2.
            \end{align*}
        In conclusion, 
        \begin{equation*}
            \mathcal{J}_2^{\alpha} \lesssim (t^{\frac{1}{4}} + t^{\frac{5}{12}})\Vert \phi\Vert_{H_x^\sigma}^4\lambda^{-2\sigma + 2}a_{\lambda}^2 \lesssim t^{\frac{1}{4}}\Vert \phi\Vert_{H_x^\sigma}^4\lambda^{-2\sigma + 2}a_{\lambda}^2.
        \end{equation*}
        
        For $\mathcal{J}_1^{\alpha}$, in view of Bony decomposition we see that that the term 
        \begin{equation*}
            P_{\lambda}(\langle \partial_ku, \partial_kn_i(u)\rangle\partial_{\ell}n_i(u))
        \end{equation*}
        involves one derivative at scale $\lambda$, which is exactly the same number of derivatives as in the term
        \begin{equation*}
            [P_{\lambda}, n_i(u)]\partial_{\ell}\langle \partial_ku, \partial_kn_i(u)\rangle,
        \end{equation*}
        where the commutator term should be expanded in the form of \eqref{BONY}.
        Therefore, repeating the same analysis above we can also obtain 
        \begin{equation*}
            \mathcal{J}_1^{\alpha} \lesssim t^{\frac{1}{4}}\Vert \phi\Vert_{H_x^\sigma}^4\lambda^{-2\sigma + 2}a_{\lambda}^2.
        \end{equation*}
        This completes the proof.
    \end{proof}
\end{proposition}

For the estimates of $\mathcal{I}_{1b}^{\beta}$, we decompose the commutator term as follows:
\begin{align*}
            [P_{\lambda}\partial_{\ell}, n_i(u)]\Delta_x u &= P_{\lambda}[\partial_{\ell}, n_i(u)]\Delta_x u + [P_{\lambda}, n_i(u)]\partial_{\ell}\Delta_x u \\ 
    &= P_{\lambda}\langle \Delta_x u, \partial_{\ell}n_i(u)\rangle + [P_{\lambda}, n_i(u)]\partial_{\ell}\Delta_x u,
\end{align*}
and we see that this term contains two derivatives at scale $\lambda$. The term $[P_{\lambda}, n_i(u)]J(u)\partial_{\ell}u$ costs no derivative.
Following the arguments in the proof of Proposition \ref{prop3}, we have 
\begin{equation*}
    \mathcal{I}_{1b}^{\beta} \lesssim t^{\frac{1}{4}}\Vert \phi\Vert_{H_x^\sigma}^4\lambda^{-2\sigma + 2}a_{\lambda}^2.
\end{equation*}
Analogously,
\begin{equation*}
    \mathcal{I}_{1b}^{\gamma} \lesssim t^{\frac{1}{4}}\Vert \phi\Vert_{H_x^\sigma}^4\lambda^{-2\sigma + 2}a_{\lambda}^2.
\end{equation*}

To conclude, 
\begin{equation*}
    \mathcal{I}_{1b} \lesssim t^{\frac{1}{4}}\Vert \phi\Vert_{H_x^\sigma}^4\lambda^{-2\sigma + 2}a_{\lambda}^2.
\end{equation*}

\subsubsection{Estimates for \texorpdfstring{$\mathcal{I}_{1a}$}{}}\label{I1a}

The compatiblility of $J$ and connection yields
\begin{equation*}
    D_{\ell}J(u) = 0, 
\end{equation*}
and hence 
\begin{equation}\label{J}
    \partial_{\ell}J_m(u) = \langle \partial_{\ell}J_m(u), n_i(u)\rangle n_i(u), \qquad  m = 1, \cdots, N,
\end{equation}
where $\partial_{\ell}J = (\partial_{\ell}J_1, \cdots, \partial_{\ell}J_N) \in \mathbb{R}^{N \times N}, \partial_{\ell}J_m(u) \in \mathbb{R}^{N \times 1}$. 

By Plancherel theorem, we have 
\begin{equation*}
    \mathcal{I}_{1a} = -\int_0^t\int_{\mathbb{T}^d}\langle D_k\partial_ku, [P_{\lambda}\partial_{\ell}, J(u)]\partial_{\ell}P_{\lambda}u\rangle \,{\rm d}x{\rm d}t,
\end{equation*}
and 
    \begin{align*}
            [P_{\lambda}\partial_{\ell}, J(u)]\partial_{\ell}P_{\lambda}u &= P_{\lambda}\partial_{\ell}J(u)\partial_{\ell}P_{\lambda}u - J(u)P_{\lambda}\partial_{\ell}\partial_{\ell}P_{\lambda}u \\ 
    &= P_{\lambda}(\partial_{\ell}J(u))\partial_{\ell}P_{\lambda}u + P_{\lambda}J(u)\partial_{\ell}\partial_{\ell}P_{\lambda}u - J(u)P_{\lambda}\partial_{\ell}\partial_{\ell}P_{\lambda}u. 
    \end{align*}
From \eqref{J}, 
\begin{equation}\label{J1}
    (\partial_{\ell}J(u))\partial_{\ell}P_{\lambda}u = (\langle \partial_{\ell}J_m(u), n_i(u)\rangle\partial_{\ell}P_{\lambda}u_m)n_i(u), \qquad m = 1, \cdots, N,
\end{equation}
where $\partial_{\ell}P_{\lambda}u = (\partial_{\ell}P_{\lambda}u_1, \cdots, \partial_{\ell}P_{\lambda}u_N)^T \in \mathbb{R}^{N \times 1}, \partial_{\ell}P_{\lambda}u_m \in \mathbb{R}$.
It follows that 
\begin{align*}
            \mathcal{I}_{1a} &= -\int_0^t\int_{\mathbb{T}^d}\langle D_k\partial_ku, P_{\lambda}(\langle \partial_{\ell}J_m(u), n_i(u)\rangle\partial_{\ell}P_{\lambda}u_mn_i(u)) \rangle \,{\rm d}x{\rm d}t \\
    &\quad - \int_0^t\int_{\mathbb{T}^d}\langle D_k\partial_ku, P_{\lambda}J(u)\partial_{\ell}\partial_{\ell}P_{\lambda}u - J(u)P_{\lambda}\partial_{\ell}\partial_{\ell}P_{\lambda}u\rangle \,{\rm d}x{\rm d}t \\
    &=: \mathcal{I}_{1a}^{\alpha} + \mathcal{I}_{1a}^{\beta}.
\end{align*}

For $\mathcal{I}_{1a}^{\alpha}$, 
\begin{align*}
    \mathcal{I}_{1a}^{\alpha} &= -\int_0^t\int_{\mathbb{T}^d}\langle P_{\lambda}D_k\partial_ku, \langle \partial_{\ell}J_m(u), n_i(u)\rangle\partial_{\ell}P_{\lambda}u_mn_i(u) \rangle \,{\rm d}x{\rm d}t \\ 
    &= -\int_0^t\int_{\mathbb{T}^d}(\langle \partial_{\ell}J_m(u), n_i(u)\rangle\partial_{\ell}P_{\lambda}u_m)\langle P_{\lambda}D_k\partial_ku, n_i(u)\rangle \,{\rm d}x{\rm d}t \\
    &= \int_0^t\int_{\mathbb{T}^d}(\langle \partial_{\ell}J_m(u), n_i(u)\rangle\partial_{\ell}P_{\lambda}u_m)([P_{\lambda}, n_i(u)]D_k\partial_ku) \,{\rm d}x{\rm d}t.
\end{align*}
Note 
\begin{equation*}
    [P_{\lambda}, n_i(u)]D_k\partial_ku = [P_{\lambda}, n_i(u)]\Delta_x u - [P_{\lambda}, n_i(u)](\langle \Delta_x u, n_j(u)\rangle n_j(u)),
\end{equation*}
and 
\begin{align*}
    &[P_{\lambda}, n_i(u)](\langle \Delta_x u, n_j(u)\rangle n_j(u)) \\
    = &P_{\lambda}(\langle \Delta_x u, n_j(u)\rangle\langle n_i(u), n_j(u)\rangle) - \langle P_{\lambda}(\langle \Delta_x u, n_j(u)\rangle n_j(u)), n_i(u)\rangle \\ 
    = &P_{\lambda}\langle \Delta_x u, n_i(u)\rangle - \langle P_{\lambda}(\langle \Delta_x u, n_j(u)\rangle n_j(u)), n_i(u)\rangle \\
    = &P_{\lambda}\langle \Delta_x u, n_i(u)\rangle - \langle [P_{\lambda}, n_j(u)]\langle \Delta_x u, n_j(u)\rangle, n_i(u)\rangle - \langle n_j(u), n_i(u)\rangle P_{\lambda}\langle \Delta_x u, n_j(u)\rangle\\ 
    = &-\langle [P_{\lambda}, n_j(u)]\langle \Delta_x u, n_j(u)\rangle, n_i(u)\rangle \\ 
    = &\langle [P_{\lambda}, n_j(u)]\langle \partial_ku, \partial_kn_j(u)\rangle, n_i(u)\rangle,
\end{align*}
where we use the property 
\begin{equation}\label{normal}
    \langle n_i(u), n_j(u)\rangle = \delta_{ij}.
\end{equation}
Therefore,
\begin{equation}\label{z}
\begin{aligned}
    &[P_{\lambda}, n_i(u)]D_k\partial_ku \\
    = &\,[P_{\lambda}, n_i(u)]\Delta_x u + P_{\lambda}\langle \partial_ku, \partial_kn_i(u)\rangle - \langle [P_{\lambda}, n_j(u)]\langle \partial_ku, \partial_kn_j(u)\rangle, n_i(u)\rangle,
\end{aligned}
\end{equation}
and it follows that this term contains one derivative at scale $\lambda$.

Estimating $\mathcal{I}_{1a}^{\alpha}$, by H\"older inequality we have 
\begin{align*}
    \mathcal{I}_{1a}^{\alpha} \lesssim \Vert \nabla_xJ(u)\Vert_{L_t^3L_x^{\infty}}\Vert \nabla_xP_{\lambda}u\Vert_{L_t^{\infty}L_x^2}\Vert [P_{\lambda}, n_i(u)]D_k\partial_ku\Vert_{L_t^{\frac{3}{2}}L_x^2}.
\end{align*}
On the one hand, from the assumptions on $J$ and \eqref{boot1}-\eqref{boot2}
\begin{align*}
            \Vert \nabla_xJ(u)\Vert_{L_t^3L_x^{\infty}} = \Vert J'(u)\nabla_xu\Vert_{L_t^3L_x^{\infty}} &\leq \Vert J'(u)\Vert_{L_t^6L_x^{\infty}}\Vert \nabla_xu\Vert_{L_t^6L_x^{\infty}} \\ 
    &\lesssim \Vert u\Vert_{L_t^6L_x^{\infty}}\Vert \nabla_xu\Vert_{L_t^6L_x^{\infty}} \\ 
    &\lesssim t^{-\frac{1}{12}}\Vert \phi\Vert_{H_x^\sigma}^2,
\end{align*}
On the other hand, combining \eqref{z} with Proposition \ref{prop1}-\ref{prop2}, we have (see also the proof of Proposition \ref{prop3})
\begin{align*}
    \Vert [P_{\lambda}, n_i(u)]D_k\partial_ku \Vert_{L_t^{\frac{3}{2}}L_x^2} \lesssim t^{\frac{3}{2} - \frac{1}{6} - \frac{1}{24} - \frac{1}{12}}\Vert \phi\Vert_{H_x^\sigma}^2\lambda^{-\sigma + 1}a_{\lambda} = t^{\frac{3}{8}}\Vert \phi\Vert_{H_x^\sigma}^2\lambda^{-\sigma + 1}a_{\lambda}.
\end{align*}
Consequently, 
\begin{equation*}
    \mathcal{I}_{1a}^{\alpha} \lesssim (t^{-\frac{1}{12}}\Vert \phi\Vert_{H_x^\sigma}^2) \cdot (t^{-\frac{1}{24}}\lambda^{-\sigma + 1}a_{\lambda}) \cdot (t^{\frac{3}{8}}\Vert \phi\Vert_{H_x^\sigma}^2\lambda^{-\sigma + 1}a_{\lambda}) = t^{\frac{1}{4}}\Vert \phi\Vert_{H_x^\sigma}^4\lambda^{-2\sigma + 2}a_{\lambda}^2.
\end{equation*}

For $\mathcal{I}_{1a}^{\beta}$, let $k_{\lambda}$ be the convolution kernel of $P_{\lambda}$, namely, $P_{\lambda}u = k_{\lambda} \ast u$. 
We have 
\begin{align*}
            &P_{\lambda}J(u)\partial_{\ell}\partial_{\ell}P_{\lambda}u - J(u)P_{\lambda}\partial_{\ell}\partial_{\ell}P_{\lambda}u \\
    = &(J(u)\partial_{\ell}\partial_{\ell}P_{\lambda}u) \ast k_{\lambda} - J(u)(\partial_{\ell}\partial_{\ell}P_{\lambda}u \ast k_{\lambda}) \\ 
    = &\int_{\mathbb{T}^d}J(u(x - y))\partial_{\ell}\partial_{\ell}P_{\lambda}u(x - y)k_{\lambda}(y) \,{\rm d}y - \int_{\mathbb{T}^d}J(u(x))\partial_{\ell}\partial_{\ell}P_{\lambda}u(x - y)k_{\lambda}(y) \,{\rm d}y \\ 
    = &\int_{\mathbb{T}^d}(J(u(x - y)) - J(u(x)))\partial_{\ell}\partial_{\ell}P_{\lambda}u(x - y)k_{\lambda}(y) \,{\rm d}y \\ 
    = &-\int_{\mathbb{T}^d}\partial_{\ell}\partial_{\ell}P_{\lambda}u(x - y)k_{\lambda}(y) \,{\rm d}y\int_0^1y \cdot \nabla J(u(x - s_1y)) \,{\rm d}s_1 \\ 
    = &\int_{\mathbb{T}^d}\partial_{\ell}\partial_{\ell}P_{\lambda}u(x - y)k_{\lambda}(y) \,{\rm d}y\int_0^1y \cdot \nabla J(u(x - s_1y)) \,{\rm d}s_1.
\end{align*}
Inserting \eqref{J} into the equality above, we obtain 
    \begin{align*}
    &P_{\lambda}J(u)\partial_{\ell}\partial_{\ell}P_{\lambda}u - J(u)P_{\lambda}\partial_{\ell}\partial_{\ell}P_{\lambda}u \\
    = &\int_{\mathbb{T}^d}\partial_{\ell}\partial_{\ell}P_{\lambda}u_m(x - y)k_{\lambda}(y) \,{\rm d}y\int_0^1y_a(\langle \partial_kJ_m, n_i\rangle n_i(u(x - s_1y))) \,{\rm d}s_1 \\ 
    = &\langle \partial_aJ_m(u), n_i(u)\rangle\int_{\mathbb{T}^d}\int_0^1\partial_{\ell}\partial_{\ell}P_{\lambda}u_m(x - y)y_ak_{\lambda}(y)n_i(u(x - s_1y)) \,{\rm d}s_1\,{\rm d}y.
\end{align*}
Since $\langle D_k\partial_ku, n_i(u)\rangle = 0$, it follows that 
\begin{align*}
            &\mathcal{I}_{1a}^{\beta} \\
    = &\langle \partial_aJ_m(u), n_i(u)\rangle\\
    &\cdot \left\langle D_k\partial_ku, \int_{\mathbb{T}^d}\int_0^1\partial_{\ell}\partial_{\ell}P_{\lambda}u_m(x - y)y_ak_{\lambda}(y)(n_i(u(x - s_1y)) - n_i(u(x))) \,{\rm d}s_1\,{\rm d}y\right\rangle \\
    = &\langle \partial_aJ_m(u), n_i(u)\rangle\\
    &\cdot \left\langle D_k\partial_ku, \int_{\mathbb{T}^d}\int_0^1\int_0^1\partial_{\ell}\partial_{\ell}P_{\lambda}u_m(x - y)s_1y_ay_bk_{\lambda}(y)\partial_bn_i(u(x - s_1s_2y)) \,{\rm d}s_2\,{\rm d}s_1\,{\rm d}y\right\rangle.
\end{align*}
Note
\begin{equation*}
    \int_{\mathbb{T}^d}|y|^2|k_{\lambda}(y)| \,{\rm d}y \lesssim \lambda^{-2}.
\end{equation*}
This $O(\lambda^{-2})$ bound, consequently, allows us to remove two derivatives at scale $\lambda$ from the term $\partial_{\ell}\partial_{\ell}P_{\lambda}u_m$.
Moreover, 
\begin{equation*}
    D_k\partial_ku = \Delta_x u - \langle \Delta_x u, n_i(u)\rangle n_i(u) = \Delta_x u + \underbrace{\langle \partial_ku, \partial_kn_i(u)\rangle n_i(u)}_{{\rm l.o.t}},
\end{equation*}
we see that $D_k\partial_ku$ is a second-order term. Therefore, $\mathcal{I}_{1a}^{\beta}$ can be estimated in a similar manner to $\mathcal{I}_{1a}^{\alpha}$, leading to
\begin{equation*}
    \mathcal{I}_{1a}^{\beta} \lesssim t^{\frac{1}{4}}\Vert \phi\Vert_{H_x^\sigma}^4\lambda^{-2\sigma + 2}a_{\lambda}^2.
\end{equation*}

To conclude, 
\begin{equation*}
    \mathcal{I}_{1a} \lesssim t^{\frac{1}{4}}\Vert \phi\Vert_{H_x^\sigma}^4\lambda^{-2\sigma + 2}a_{\lambda}^2.
\end{equation*}

\subsubsection{Estimates for \texorpdfstring{$\mathcal{I}_2$}{}}

We rewrite $\mathcal{I}_2$ as 
    \begin{align*}
            \mathcal{I}_2 &= \int_0^t\int_{\mathbb{T}^d}\langle \partial_k\partial_{\ell}P_{\lambda}u, \langle \partial_kJ_m(u), n_i(u)\rangle\partial_{\ell}P_{\lambda}u_mn_i(u)\rangle \,{\rm d}x{\rm d}t \\ 
    &= -\int_0^t\int_{\mathbb{T}^d}(\langle \partial_kJ_m(u), n_i(u)\rangle \partial_{\ell}P_{\lambda}u_m)([P_{\lambda}\partial_{\ell}, n_i(u)]\partial_ku) \,{\rm d}x{\rm d}t,
    \end{align*}
and 
    \begin{align*}
            [P_{\lambda}\partial_{\ell}, n_i(u)]\partial_ku &= P_{\lambda}[\partial_{\ell}, n_i(u)]\partial_ku + [P_{\lambda}, n_i(u)]\partial_{\ell}\partial_ku \\ 
    &= P_{\lambda}\langle \partial_ku, \partial_{\ell}u\rangle + [P_{\lambda}, n_i(u)]\partial_{\ell}\partial_ku.
    \end{align*}
Again, we see that the terms $\langle \partial_kJ_m(u), n_i(u)\rangle \partial_{\ell}P_{\lambda}u_m$ and $[P_{\lambda}\partial_{\ell}, n_i(u)]\partial_ku$ contain both only one derivative at scale $\lambda$.
Applying the same techniques when controlling $\mathcal{I}_{1b}^{\alpha}$ (in Section \ref{I1b}) and $\mathcal{I}_{1a}^{\alpha}$ (in Section \ref{I1a}), we obtain 
\begin{equation*}
    \mathcal{I}_2 \lesssim t^{\frac{1}{4}}\Vert \phi\Vert_{H_x^\sigma}^4\lambda^{-2\sigma + 2}a_{\lambda}^2.
\end{equation*}

\subsubsection{Estimates for \texorpdfstring{$\mathcal{I}_3$}{}}

Note 
\begin{align*}
            [P_{\lambda}\Delta, n_i(u)]\partial_{\ell}u &= P_{\lambda}\partial_k[\partial_k, n_i(u)]\partial_{\ell}u + [P_{\lambda}\partial_k, n_i(u)]\partial_k\partial_{\ell}u \\ 
    &= P_{\lambda}\partial_k\langle \partial_{\ell}u, \partial_kn_i(u)\rangle + P_{\lambda}[\partial_k, n_i(u)]\partial_k\partial_{\ell}u + [P_{\lambda}, n_i(u)]\Delta_x\partial_{\ell}u \\ 
    &= P_{\lambda}\partial_k\langle \partial_{\ell}u, \partial_kn_i(u)\rangle + P_{\lambda}\langle \partial_k\partial_{\ell}u, \partial_kn_i(u)\rangle + [P_{\lambda}, n_i(u)]\Delta_x\partial_{\ell}u.
    \end{align*}
We see that this commutator term contains two derivatives at scale $\lambda$. Since the terms 
\begin{equation*}
    [P_{\lambda}, n_i(u)]J(u)\partial_{\ell}u, \quad \langle [P_{\lambda}, J(u)]\partial_{\ell}u, n_i(u)\rangle
\end{equation*}
have no derivative, we conclude 
\begin{equation*}
    \mathcal{I}_3 \lesssim t^{\frac{1}{4}}\Vert \phi\Vert_{H_x^\sigma}^4\lambda^{-2\sigma + 2}a_{\lambda}^2.
\end{equation*}

\subsubsection{The \texorpdfstring{$L_t^{\infty}L_x^2$}{} bound}

We have proved that 
\begin{equation}
    \mathcal{I}_1, \mathcal{I}_2, \mathcal{I}_3 \lesssim t^{\frac{1}{4}}\Vert \phi\Vert_{H_x^\sigma}^4\lambda^{-2\sigma + 2}a_{\lambda}^2.
\end{equation}
By \eqref{energy}, 
\begin{equation}
    \Vert \nabla_xP_{\lambda}u(t)\Vert_{L_x^2}^2 \lesssim \Vert \nabla_xP_{\lambda}\phi\Vert_{L_x^2}^2 + t^{\frac{1}{4}}\Vert \phi\Vert_{H_x^\sigma}^4\lambda^{-2\sigma + 2}a_{\lambda}^2 \lesssim (1 + t^{\frac{1}{4}}\Vert \phi\Vert_{H_x^\sigma}^4)\lambda^{-2\sigma + 2}a_{\lambda}^2,
\end{equation}
and hence we can choose $T^* > 0$, depending only on $\Vert \phi\Vert_{H_x^\sigma}$, such that  
\begin{equation}
    \Vert \nabla_xP_{\lambda}u(t)\Vert_{L_x^2} \lesssim \lambda^{-\sigma + 1}a_{\lambda}, \qquad \forall t \in [0, T^*].
\end{equation}

\subsection{\texorpdfstring{$L_t^6L_x^{\infty}$}{} \emph{a priori} estimate}\label{Linfty}

Applying div-curl lemma \eqref{div2} to the system consisting of \eqref{dc11}-\eqref{dc22} yields
\begin{equation}\label{divcurl}
    {\rm LHS} \lesssim {\rm RHS},
\end{equation}
where 
\begin{align*}
    {\rm LHS} &:= L_1 + L_2, \\
    {\rm RHS} &:= R_1 \cdot R_2 + R_3
\end{align*}
and 
\begin{align*}
    L_1 &= \int_0^t\int_{\mathbb{T}}\left(\int_{\mathbb{T}^{d - 1}}\frac{1}{2}|\nabla_xP_{\lambda}u|^2 \,{\rm d}\widehat{x}_1\right)\left(\int_{\mathbb{T}^{d - 1}}\left(|\nabla_x\partial_1P_{\lambda}u|^2 - \langle \partial_{\ell} P_{\lambda}u, \partial_1^2\partial_{\ell}P_{\lambda}u\rangle\right) \,{\rm d}\widehat{x}_1\right)\,{\rm d}x_1{\rm d}t,\\ 
    L_2 &= -\int_0^t\int_{\mathbb{T}}\left(\int_{\mathbb{T}^{d - 1}}\langle \partial_1\partial_{\ell}P_{\lambda}u, J(u)\partial_{\ell}P_{\lambda}u\rangle \,{\rm d}\widehat{x}_1\right)^2\,{\rm d}x_1{\rm d}t,\\
    R_1 &= \int_{\mathbb{T}^d}\frac{1}{2}|\nabla_xP_{\lambda}\phi|^2 \,{\rm d}x + \sup_t\int_{\mathbb{T}^d}\frac{1}{2}|\nabla_xP_{\lambda}u(t)|^2 \,{\rm d}x + \int_0^t\int_{\mathbb{T}}|\mathcal{G}_{\lambda}| \,{\rm d}x_1{\rm d}t, \\
    R_2 &= \int_{\mathbb{T}^d}|\langle \partial_1\partial_{\ell}P_{\lambda}\phi, J(u)\partial_{\ell}P_{\lambda}\phi\rangle| \,{\rm d}x + \sup_t\int_{\mathbb{T}^d}|\langle \partial_1\partial_{\ell}P_{\lambda}u(t), J(u)\partial_{\ell}P_{\lambda}u(t)\rangle| \,{\rm d}x + \int_0^t\int_{\mathbb{T}}|\mathcal{H}_{\lambda}| \,{\rm d}x_1{\rm d}t, \\ 
    R_3 &= \int_0^t\left(\int_{\mathbb{T}^d}\frac{1}{2}|\nabla_xP_{\lambda}u|^2 \,{\rm d}x\right)\left(\int_{\mathbb{T}^d}\left(|\nabla_x\partial_1P_{\lambda}u|^2 - \langle \partial_{\ell} P_{\lambda}u, \partial_1^2\partial_{\ell}P_{\lambda}u\rangle\right) \,{\rm d}x\right) \,{\rm d}t \\ 
    &\quad- \int_0^t\left(\int_{\mathbb{T}^d}\langle \partial_1\partial_{\ell}P_{\lambda}u, J(u)\partial_{\ell}P_{\lambda}u\rangle \,{\rm d}x\right)^2 \,{\rm d}t.
\end{align*}

For ${\rm LHS}$, we first note
\begin{equation*}
    \langle \partial_{\ell} P_{\lambda}u, \partial_1^2\partial_{\ell}P_{\lambda}u\rangle = \partial_1^2\frac{1}{2}|\nabla_xP_{\lambda}u|^2 - |\nabla_x\partial_1P_{\lambda}u|^2.
\end{equation*}
Then integration by parts yields 
    \begin{align*}
            L_1 &= \int_0^t\int_{\mathbb{T}}\left(\partial_1\int_{\mathbb{T}^{d - 1}}\frac{1}{2}|\nabla_xP_{\lambda}u|^2 \,{\rm d}\widehat{x}_1\right)^2 \,{\rm d}x_1{\rm d}t \\
    &\quad + \int_0^t\int_{\mathbb{T}}\left(\int_{\mathbb{T}^{d - 1}}|\nabla_xP_{\lambda}u|^2 \,{\rm d}\widehat{x}_1\right)\left(\int_{\mathbb{T}^{d - 1}}|\nabla_x\partial_1P_{\lambda}u|^2 \,{\rm d}\widehat{x}_1\right) \,{\rm d}x_1{\rm d}t.
    \end{align*}
For $L_2$, by H\"older inequality  
\begin{align*}
    L_2 \geq -\int_0^t\int_{\mathbb{T}}\left(\int_{\mathbb{T}^{d - 1}}|\nabla_xP_{\lambda}u|^2 \,{\rm d}\widehat{x}_1\right)\left(\int_{\mathbb{T}^{d - 1}}|\nabla_x\partial_1P_{\lambda}u|^2 \,{\rm d}\widehat{x}_1\right) \,{\rm d}x_1{\rm d}t.
\end{align*}
Therefore, 
\begin{equation}\label{LHS1}
    {\rm LHS} \geq \int_0^t\int_{\mathbb{T}}\left(\partial_1\int_{\mathbb{T}^{d - 1}}\frac{1}{2}|\nabla_xP_{\lambda}u|^2 \,{\rm d}\widehat{x}_1\right)^2 \,{\rm d}x_1{\rm d}t.
\end{equation}

Turning to the estimates for RHS, firstly from the energy estimates we have 
\begin{equation*}
   R_1 \lesssim t^{\frac{1}{4}}\Vert \phi\Vert_{H_x^\sigma}^4\lambda^{-2\sigma + 2}a_{\lambda}^2,
\end{equation*} 
 and also 
\begin{gather*}
            \int_{\mathbb{T}^d}|\langle \partial_1\partial_{\ell}P_{\lambda}\phi, J(u)\partial_{\ell}P_{\lambda}\phi\rangle| \,{\rm d}x \lesssim \Vert \nabla_x\partial_1P_{\lambda}\phi\Vert_{L_x^2}\Vert \nabla_xP_{\lambda}\phi\Vert_{L_x^2} \lesssim \lambda^{-2\sigma + 3}a_{\lambda}^2, \\ 
    \sup_t\int_{\mathbb{T}^d}|\langle \partial_1\partial_{\ell}P_{\lambda}u(t), J(u)\partial_{\ell}P_{\lambda}u(t)\rangle| \,{\rm d}x \lesssim t^{\frac{1}{4}}\lambda^{-2\sigma + 3}a_{\lambda}^2.
\end{gather*}
For $R_3$, from H\"older's inequality and \eqref{boot1}-\eqref{boot2} we have 
\begin{align*}
    R_3 \lesssim t\lambda^2\Vert \nabla_xP_{\lambda}u\Vert_{L_t^{\infty}L_x^2}^4 \lesssim t^{\frac{5}{6}}\lambda^{-4\sigma + 6}a_{\lambda}^4.
\end{align*}
To control the term 
\begin{equation*}
    \int_0^t\int_{\mathbb{T}}|\mathcal{H}_{\lambda}| \,{\rm d}x_1{\rm d}t,
\end{equation*}
we decompose the commutator terms in $\mathcal{H}_{\lambda}$ to investigate the number of derivatives involved at scale $\lambda$.
We recall that 
\begin{equation*}
    \mathcal{H}_{\lambda} = \int_{\mathbb{T}^{d - 1}}(\mathcal{H}_{\lambda}^1 + \mathcal{H}_{\lambda}^2) \,{\rm d}\widehat{x}_1
\end{equation*}
and $\mathcal{H}_{\lambda}^1, \mathcal{H}_{\lambda}^2$ are defined in Section \ref{Balance}.
For the first term of $\mathcal{H}_{\lambda}^1$, we have
\begin{align*}
            &[\partial_1\partial_{\ell}, n_i(u)]([P_{\lambda}\partial_k, n_i(u)]\partial_ku) \\
    = &\,\partial_1[\partial_{\ell}, n_i(u)]([P_{\lambda}\partial_k, n_i(u)]\partial_ku) + [\partial_1, n_i(u)](\partial_{\ell}([P_{\lambda}\partial_k, n_i(u)]\partial_ku)) \\
    = &\,\partial_1(([P_{\lambda}\partial_k, n_i(u)]\partial_ku)\partial_{\ell}n_i(u)) + (\partial_{\ell}([P_{\lambda}\partial_k, n_i(u)]\partial_ku))\partial_1n_i(u) \\ 
    = &\,\partial_1((P_{\lambda}[\partial_k, n_i(u)]\partial_ku + [P_{\lambda}, n_i(u)]\Delta_x u)\partial_{\ell}n_i(u)) \\
    &+ (\partial_{\ell}(P_{\lambda}[\partial_k, n_i(u)]\partial_ku + [P_{\lambda}, n_i(u)]\Delta_x u))\partial_1n_i(u) \\
    = &\,\partial_1((P_{\lambda}\langle \partial_ku, \partial_kn_i(u)\rangle)\partial_{\ell}n_i(u)) + \partial_1(([P_{\lambda}, n_i(u)]\Delta_x u)\partial_{\ell}n_i(u)) \\
    &+ (P_{\lambda}\partial_{\ell}\langle \partial_ku, \partial_kn_i(u)\rangle)\partial_1n_i(u) + (\partial_{\ell}[P_{\lambda}, n_i(u)]\Delta_x u)\partial_1n_i(u) \\ 
    = &\,(P_{\lambda}\partial_1\langle \partial_ku, \partial_kn_i(u)\rangle)\partial_{\ell}n_i(u) + (P_{\lambda}\langle \partial_ku, \partial_kn_i(u)\rangle)\partial_1\partial_{\ell}n_i(u) \\
    &+ (\partial_1[P_{\lambda}, n_i(u)]\Delta_x u)\partial_{\ell}n_i(u) + ([P_{\lambda}, n_i(u)]\partial_1\Delta_x u)\partial_{\ell}n_i(u) \\
    &+ (P_{\lambda}\partial_{\ell}\langle \partial_ku, \partial_kn_i(u)\rangle)\partial_1n_i(u) + (\partial_{\ell}[P_{\lambda}, n_i(u)]\Delta_x u)\partial_1n_i(u) \\
    =: &\,\mathcal{P}^{1a}_{\lambda} + \mathcal{P}^{1b}_{\lambda} + \mathcal{P}^{1c}_{\lambda} + \mathcal{P}^{1d}_{\lambda} + \mathcal{P}^{1e}_{\lambda} + \mathcal{P}^{1f}_{\lambda}. 
\end{align*}
It follows that 
    \begin{align*}
            \langle \mathcal{P}^{1a}_{\lambda}, \partial_{\ell}P_{\lambda}u\rangle &= (P_{\lambda}\partial_1\langle \partial_ku, \partial_kn_i(u)\rangle)\langle \partial_{\ell}P_{\lambda}u, \partial_1n_i(u)\rangle \\
    &= -(P_{\lambda}\partial_1\langle \partial_ku, \partial_kn_i(u)\rangle)([P_{\lambda}, \partial_1n_i(u)]\partial_{\ell}u) \\
    &\quad + (P_{\lambda}\partial_1\langle \partial_ku, \partial_kn_i(u)\rangle)(P_{\lambda}\langle \partial_{\ell}u, \partial_1n_i(u)\rangle), \\
    \langle \mathcal{P}^{1b}_{\lambda}, \partial_{\ell}P_{\lambda}u\rangle &= (P_{\lambda}\langle \partial_ku, \partial_kn_i(u)\rangle)\langle\partial_{\ell}P_{\lambda}u, \partial_1\partial_{\ell}n_i(u)\rangle \\ 
    &= -(P_{\lambda}\langle \partial_ku, \partial_kn_i(u)\rangle)([P_{\lambda}, \partial_1\partial_{\ell}n_i(u)]\partial_{\ell}u) \\ 
    &\quad + (P_{\lambda}\langle \partial_ku, \partial_kn_i(u)\rangle)(P_{\lambda}\langle \partial_{\ell}u, \partial_1\partial_{\ell}n_i(u)\rangle), \\ 
    \langle \mathcal{P}^{1d}_{\lambda}, \partial_{\ell}P_{\lambda}u\rangle &= ([P_{\lambda}, n_i(u)]\partial_1\Delta_x u)\langle \partial_{\ell}P_{\lambda}u, \partial_1n_i(u)\rangle \\
    &= ([P_{\lambda}, n_i(u)]\partial_1\Delta_x u)(-[P_{\lambda}, \partial_1n_i(u)]\partial_{\ell}P_{\lambda}u + P_{\lambda}\langle \partial_{\ell}u, \partial_1n_i(u)\rangle).
\end{align*}
The terms $\langle \mathcal{P}_{\lambda}^{1e}, \partial_{\ell}P_{\lambda}u\rangle, \langle \mathcal{P}^{1f}_{\lambda}, \partial_{\ell}P_{\lambda}u\rangle$ can be dealt with exactly the same as $\langle \mathcal{P}_{\lambda}^{1a}, \partial_{\ell}P_{\lambda}u\rangle$ and $\langle \mathcal{P}_{\lambda}^{1c}, \partial_{\ell}P_{\lambda}u\rangle$, respectively.
For $\mathcal{P}_{\lambda}^{1c}$, note the identity 
\begin{equation*}
    \partial_1[P_{\lambda}, n_i(u)] = [P_{\lambda}, \partial_1n_i(u)] + [P_{\lambda}, n_i(u)]\partial_1.
\end{equation*}
We have 
\begin{equation*}
    \begin{aligned}
     \langle \mathcal{P}^{1c}_{\lambda}, \partial_{\ell}P_{\lambda}u\rangle &= ([P_{\lambda}, \partial_1n_i(u)]\Delta_x u + [P_{\lambda}, n_i(u)]\partial_1\Delta_x u)\langle \partial_{\ell}P_{\lambda}u, \partial_{\ell}n_i(u)\rangle \\
    &= ([P_{\lambda}, \partial_1n_i(u)]\Delta_x u + [P_{\lambda}, n_i(u)]\partial_1\Delta_x u)(-[P_{\lambda}, \partial_{\ell}n_i(u)]\partial_{\ell}u + P_{\lambda}\langle \partial_{\ell}u, \partial_{\ell}n_i(u)\rangle).
    \end{aligned}
\end{equation*}
The calculations above shows that the first term of $\mathcal{H}_{\lambda}^1$ contains three derivatives at $\lambda$ scale.
Thus, 
\begin{align*}
    \int_0^t\int_{\mathbb{T}}\left|\int_{\mathbb{T}^{d - 1}}\langle [\partial_1\partial_{\ell}, n_i(u)]([P_{\lambda}\partial_k, n_i(u)]\partial_ku), \partial_{\ell}P_{\lambda}u\rangle \,{\rm d}\widehat{x}_1\right| \,{\rm d}x_1{\rm d}t \lesssim t^{\frac{1}{4}}\Vert \phi\Vert_{H_x^\sigma}^4\lambda^{-2\sigma + 3}a_{\lambda}^2.
\end{align*} 
The other components of $\mathcal{H}_{\lambda}^1$ can be estimated similarly, except for the term
\begin{equation}\label{abs}
    \langle \partial_1\partial_{\ell}[P_{\lambda}, D_k]\partial_ku, \partial_{\ell}P_{\lambda}u\rangle,
\end{equation}
we need to expand the commutator $[P_{\lambda}, D_k]$ to see that the highest-order term $\partial_1\partial_{\ell}\partial_k\partial_kP_{\lambda}u$ vanishes since $P_{\lambda}$ and $\partial$ commute:
\begin{align*}
    \partial_1\partial_{\ell}[P_{\lambda}, D_k]\partial_ku &= \partial_1\partial_{\ell}P_{\lambda}\partial_k\partial_ku - \partial_1\partial_{\ell}P_{\lambda}(\langle \partial_k\partial_ku, n_i(u)\rangle n_i(u)) \\
    &\quad - \partial_1\partial_{\ell}\partial_kP_{\lambda}\partial_ku + \partial_1\partial_{\ell}(\langle \partial_kP_{\lambda}\partial_ku, n_i(u)\rangle n_i(u)) \\
    &= - \partial_1\partial_{\ell}P_{\lambda}(\langle \partial_k\partial_ku, n_i(u)\rangle n_i(u)) + \partial_1\partial_{\ell}(\langle \partial_kP_{\lambda}\partial_ku, n_i(u)\rangle n_i(u)).
\end{align*}
Moreover, thanks to the orthogonality of unit normals and tangent vectors, 
the inner products of the remainders with the term $\partial_{\ell}P_{\lambda}u$ exhibits a certain commutator structure, and hence \eqref{abs} also involves three derivatives at the scale $\lambda$ (see the proof for Proposition \ref{prop3} for more details).
We conclude that 
\begin{equation*}
    \int_0^t\int_{\mathbb{T}^{d - 1}}|\mathcal{H}_{\lambda}^1| \,{\rm d}x_1{\rm d}t \lesssim t^{\frac{1}{4}}\Vert \phi\Vert_{H_x^\sigma}^4\lambda^{-2\sigma + 3}a_{\lambda}^2.
\end{equation*}

For the estimates of $\mathcal{H}_{\lambda}^2$, we note for the term
\begin{equation}\label{AB}
    \langle (\partial_tJ(u))\partial_{\ell}P_{\lambda}u - (\partial_{\ell}J(u))\partial_tP_{\lambda}u, \partial_1\partial_{\ell}P_{\lambda}u\rangle,
\end{equation}
we need to use the compatiblility condition $D_kJ = D_tJ = 0$ to see that \eqref{AB} costs three derivative at scale $\lambda$, where $\partial_tP_{\lambda}u$ should be expanded as follows:
\begin{equation*}
    \partial_tP_{\lambda}u = [P_{\lambda}, J(u)]D_k\partial_ku + J(u)[P_{\lambda}, D_k]\partial_ku + J(u)D_k\partial_kP_{\lambda}u.
\end{equation*}
The estimates for the remaining terms of $\mathcal{H}_{\lambda}^2$ are completely analogous to those of preceding terms.
Therefore, 
\begin{equation*}
    \int_0^t\int_{\mathbb{T}^{d - 1}}|\mathcal{H}_{\lambda}^2| \,{\rm d}x_1{\rm d}t \lesssim t^{\frac{1}{4}}\Vert \phi\Vert_{H_x^\sigma}^4\lambda^{-2\sigma + 3}a_{\lambda}^2.
\end{equation*}

To conclude, 
\begin{equation}\label{RHS111}
    {\rm RHS} \lesssim \lambda((1 + t^{\frac{1}{4}})\Vert \phi\Vert_{H_x^\sigma}^4\lambda^{-2\sigma + 2}a_{\lambda}^2)^2 + t^{\frac{5}{6}}\lambda^{-4\sigma + 6}a_{\lambda}^4.
\end{equation}
Inserting \eqref{LHS1} and \eqref{RHS111} into \eqref{divcurl}, we obtain
\begin{equation}\label{divcurl1}
    \begin{aligned}
        \int_0^t\int_{\mathbb{T}}\left(\partial_1\left(\int_{\mathbb{T}^{d - 1}}\frac{1}{2}|\nabla_xP_{\lambda}u|^2 \,{\rm d}\widehat{x}_1\right)\right)^2 \,{\rm d}x_1{\rm d}t \lesssim \lambda((1 + t^{\frac{1}{4}})\Vert \phi\Vert_{H_x^\sigma}^4\lambda^{-2\sigma + 2}a_{\lambda}^2)^2 + t^{\frac{5}{6}}\lambda^{-4s + 6}a_{\lambda}^4.
    \end{aligned}
\end{equation}

Finally, we set 
\begin{equation*}
    f(x_1) := \int_{\mathbb{T}^{d - 1}}\frac{1}{2}|\nabla_xP_{\lambda}u|^2 \,{\rm d}\widehat{x}_1.
\end{equation*}
By Gagliardo-Nirenberg-Sobolev inequality
\begin{equation*}
    \Vert f\Vert_{L_{x_1}^{\infty}(\mathbb{T})} \lesssim \Vert \partial_1f\Vert_{L_{x_1}^1(\mathbb{T})}^{\frac{2}{3}}\Vert f\Vert_{L_{x_1}^2(\mathbb{T})}^{\frac{1}{3}} + \Vert f \Vert_{L_{x_1}^1(\mathbb{T})},
\end{equation*}
we have 
\begin{equation*}
    \begin{aligned}
            &\sup_{x_1}\int_{\mathbb{T}^{d - 1}}\frac{1}{2}|\nabla_xP_{\lambda}u|^2 \,{\rm d}\widehat{x}_1 \\
    \lesssim &\left(\int_{\mathbb{T}}\left(\partial_1\int_{\mathbb{T}^{d - 1}}\frac{1}{2}|\nabla_xP_{\lambda}u|^2 \,{\rm d}\widehat{x}_1\right)^2 \,{\rm d}x_1\right)^{\frac{1}{3}}\left(\int_{\mathbb{T}^d}\frac{1}{2}|\nabla_xP_{\lambda}u|^2 \,{\rm d}x\right)^{\frac{1}{3}} + \int_{\mathbb{T}^{d}}\frac{1}{2}|\nabla_xP_{\lambda}u|^2 \,{\rm d}x.
    \end{aligned}
\end{equation*}
Namely, 
\begin{equation*}
    \begin{aligned}
            &\left(\sup_{x_1}\int_{\mathbb{T}^{d - 1}}\frac{1}{2}|\nabla_xP_{\lambda}u|^2 \,{\rm d}\widehat{x}_1\right)^3 \\
    \lesssim &\left(\int_{\mathbb{T}}\left(\partial_1\int_{\mathbb{T}^{d - 1}}\frac{1}{2}|\nabla_xP_{\lambda}u|^2 \,{\rm d}\widehat{x}_1\right)^2 \,{\rm d}x_1\right)\left(\int_{\mathbb{T}^d}\frac{1}{2}|\nabla_xP_{\lambda}u|^2 \,{\rm d}x\right) + \left(\int_{\mathbb{T}^{d}}\frac{1}{2}|\nabla_xP_{\lambda}u|^2 \,{\rm d}x\right)^3.
    \end{aligned}
\end{equation*}
Integrating from $0$ to $t$ on both sides of the inequality above, we get
\begin{equation}
    \begin{aligned}
            &\int_0^t\left(\sup_{x_1}\int_{\mathbb{T}^{d - 1}}\frac{1}{2}|\nabla_xP_{\lambda}u|^2 \,{\rm d}\widehat{x}_1\right)^3 \,{\rm d}t \\
    \lesssim &\int_0^t\left(\int_{\mathbb{T}}\left(\partial_1\int_{\mathbb{T}^{d - 1}}\frac{1}{2}|\nabla_xP_{\lambda}u|^2 \,{\rm d}\widehat{x}_1\right)^2 \,{\rm d}x_1\right)\left(\int_{\mathbb{T}^d}\frac{1}{2}|\nabla_xP_{\lambda}u|^2 \,{\rm d}x\right) \,{\rm d}t + \int_0^t\left(\int_{\mathbb{T}^{d}}\frac{1}{2}|\nabla_xP_{\lambda}u|^2 \,{\rm d}x\right)^3 \,{\rm d}t.
    \end{aligned}
\end{equation}
By H\"older inequality, 
    \begin{align*}
            \Vert \nabla_xP_{\lambda}u\Vert_{L_t^6L_{x_1}^{\infty}L_y^2}^6 \lesssim \Vert \nabla_xP_{\lambda}u\Vert_{L_t^{\infty}L_x^2}^2\int_0^t\int_{\mathbb{T}}\left(\partial_1\left(\int_{\mathbb{T}^{d - 1}}\frac{1}{2}|\nabla_xP_{\lambda}u|^2 \,{\rm d}\widehat{x}_1\right)\right)^2 \,{\rm d}x_1{\rm d}t  + \Vert \nabla_xP_{\lambda}u\Vert_{L_t^6L_x^2}^6, 
    \end{align*}
and it follows from energy estimates and \eqref{divcurl1} that 
    \begin{align*}
        \Vert \nabla_xP_{\lambda}u\Vert_{L_t^6L_{x_1}^{\infty}L_y^2}^6 \lesssim (\lambda^{-\sigma + 1}a_{\lambda})^2(\lambda((1 + t^{\frac{1}{4}})\Vert \phi\Vert_{H_x^\sigma}^4\lambda^{-2\sigma + 2}a_{\lambda}^2)^2 + t^{\frac{5}{6}}\lambda^{-4\sigma + 6}a_{\lambda}^4) + t(\lambda^{-\sigma + 1}a_{\lambda})^6.
    \end{align*}
Hence there exists $T^* = T^*(\Vert \phi\Vert_{H_x^\sigma}) > 0$, such that
\begin{equation*}
    \Vert \nabla_xP_{\lambda}u\Vert_{L_t^6L_{x_1}^{\infty}L_y^2} \lesssim \lambda^{\frac{1}{6}(-2\sigma + 2 - 4\sigma+ 6)}a_{\lambda} = \lambda^{-\sigma + \frac{4}{3}}a_{\lambda}, \qquad t \in [0, T^*].
\end{equation*}
By Bernstein inequality,
\begin{equation*}
    \Vert \nabla_xP_{\lambda}u\Vert_{L_t^6L_x^{\infty}} \lesssim \lambda^{\frac{d - 1}{2}}\Vert \nabla_xP_{\lambda}u\Vert_{L_t^6L_{x_1}^{\infty}L_y^2} \lesssim \lambda^{-\sigma + \frac{d}{2} + \frac{5}{6}}a_{\lambda}.
\end{equation*}

\appendix

\section{Estimates for \texorpdfstring{$\widetilde{\bm{A}}$}{}}\label{ABC}

We recall that 
\begin{equation*}
    \widetilde{\bm{A}} = \nabla_x^{-1}\sum_{\ell = 1}^d\mathcal{R}_{\ell}(\Im(\bm{\psi}\overline{\psi_{\ell}})),
\end{equation*}
where $\nabla_x^{-1}$ is the Fourier multiplier defined by $\xi \mapsto |\xi|^{-1}$ and $\mathcal{R}_{\ell}$ is the Riesz-type operator.

\begin{proposition}\label{B1}
    Suppose $\sigma > (d + 1)/2$ and \eqref{13}-\eqref{20} hold. We have 
    \begin{equation*}
        \Vert \widetilde{\bm{A}}\Vert_{L_t^2H_x^{\frac{d + 1}{2}}} \lesssim t^{-\sqrt{\frac{\varepsilon}{2}}}\Vert \bm{\psi}(0)\Vert_{H_x^{\sigma - 1}}^2.
    \end{equation*}
    \begin{proof}
        The proof for this proposition is standard, utilizing Littlewood-Paley decomposition, and we include it for completeness.
        Expressing the $H_x^{\sigma}$ norm using dyadic decomposition, we get 
        \begin{equation*}
            \begin{aligned}
            \Vert \widetilde{\bm{A}}\Vert_{L_t^2H_x^{\frac{d + 1}{2}}}^2 &\lesssim \sum_{\lambda \geq 1}\lambda^{d - 1}\Vert P_{\lambda}(\bm{\psi} \cdot \bm{\psi})\Vert_{L_{t, x}^2}^2 \\ 
            &\lesssim \left(\sum_{\lambda_1 \ll \lambda_2 \sim \lambda} + \sum_{\lambda_2 \ll \lambda_1 \sim \lambda} + \sum_{\lambda_1 \sim \lambda_2 \gtrsim \lambda}\right)\lambda^{d - 1}\Vert P_{\lambda_1}\bm{\psi} \cdot P_{\lambda_2}\bm{\psi}\Vert_{L_{t, x}^2}^2 \\ 
            &=: \mathcal{A}_1 + \mathcal{A}_2 + \mathcal{A}_3.
            \end{aligned}
        \end{equation*}
        For the estimates of $\mathcal{A}_1$ and $\mathcal{A}_2$ we use $L_{t, x}^2$ bilinear estimates \eqref{16}. For instance, 
        \begin{equation*}
            \begin{aligned}
                \mathcal{A}_1 &\lesssim \sum_{\lambda \geq 1}\sum_{\mu \ll \lambda}\sum_{i = 1}^d \lambda^{d - 1}\mu^{d - 1}\left\Vert \Vert P_{\mu}\bm{\psi}\Vert_{L_{\widehat{x}_i}^2}\Vert P_{\lambda, e_i}\bm{\psi}\Vert_{L_{\widehat{x}_i}^2}\right\Vert_{L_{t, x_i}^2}^2 \\ 
                &\lesssim t^{-\frac{\varepsilon}{4}}\sum_{\lambda \geq 1}\sum_{\mu \ll \lambda} \lambda^{-2\sigma + d + 1}\mu^{-2\sigma + d + 1}a_{\mu}^2a_{\lambda}^2 \\ 
                &\lesssim t^{-\frac{\varepsilon}{4}}\Vert \bm{\psi}(0)\Vert_{H_x^{\sigma - 1}}^4
            \end{aligned}
        \end{equation*}
        since $\sigma > (d + 1)/2$. We also conclude 
        \begin{equation*}
            \mathcal{A}_2 \lesssim t^{-\frac{\varepsilon}{4}}\Vert \bm{\psi}(0)\Vert_{H_x^{\sigma - 1}}^4.
        \end{equation*}
        Controlling $\mathcal{A}_3$, by \eqref{17} 
        \begin{equation*}
            \begin{aligned}
            \mathcal{A}_3 \lesssim \sum_{\mu \gtrsim \lambda \geq 1}\lambda^{d - 1}\Vert P_{\mu}\bm{\psi}\Vert_{L_{t, x}^4}^4 \lesssim t^{-\frac{\varepsilon}{2}}\sum_{\mu \gtrsim \lambda \geq 1}\lambda^{d - 1}\mu^{-4\sigma + d + 3}a_{\mu}^4 \lesssim t^{-\frac{\varepsilon}{2}}\Vert \bm{\psi}(0)\Vert_{H_x^{\sigma - 1}}^4.
            \end{aligned}
        \end{equation*}
        The proof is complete.
    \end{proof}
\end{proposition}

\section{Estimates for Homotopy Initial Data}

Suppose $\phi_0, \phi_1 \in H^{\sigma}(\mathbb{T}^d; \mathbb{S}^2)$ with $\sigma > d/2 + 1/2$.
For $0 < \zeta < 1$ we define 
\begin{equation*}
    \phi_{\zeta} = (1 - \zeta)\phi_0 + \zeta\phi_1, \quad \Phi_{\zeta} := \frac{\phi_{\zeta}}{|\phi_{\zeta}|}.
\end{equation*}
Moreover we assume $\phi_0 \neq -\phi_1$, and then there exists $C_0 > 0$ such that 
\begin{equation}\label{assu}
    \inf_{x \in \mathbb{T}^d, \zeta \in [0, 1]}|\phi_{\zeta}| \geq C_0.
\end{equation}

\begin{proposition}\label{AC}
    For $d \geq 2$ and $\sigma > d/2$
    \begin{equation*}
        \Vert \partial_{\zeta}\Phi_{\zeta}\Vert_{H_x^{\sigma - 1}} \lesssim \Vert \phi_0 - \phi_1\Vert_{H_x^{\sigma - 1}}.
    \end{equation*}
    \begin{proof}
        We first have
        \begin{equation*}
            \partial_{\zeta}\Phi_{\zeta} = \frac{\phi_1 - \phi_0}{|\phi_{\zeta}|} - \frac{\phi_{\zeta} \cdot (\phi_1 - \phi_0)}{|\phi_{\zeta}|^2}\phi_{\zeta}.
        \end{equation*}
        Note $|\phi_0| = |\phi_1| = 1$. It follows that 
        \begin{equation*}
            \begin{aligned}
                |\phi_{\zeta}|^2 &= (1 - \zeta)^2|\phi_0|^2 + 2\zeta(1 - \zeta)\phi_1 \cdot \phi_0 + \zeta^2|\phi_0|^2 \\ 
                &= (1 - \zeta)^2 + \zeta^2 + 2\zeta(1 - \zeta)\phi_1 \cdot \phi_0 \\ 
                &= 1 - 2\zeta(1 - \zeta)(1 - \phi_1 \cdot \phi_0) \\ 
                &= 1 - \zeta(1 - \zeta)|\phi_1 - \phi_0|^2, \\
                \phi_{\zeta} \cdot (\phi_1 - \phi_0) &= ((1 - \zeta)\phi_0 + \zeta\phi_1) \cdot (\phi_1 - \phi_0) \\ 
                &= -(1 - \zeta)|\phi_0|^2 + (1 - \zeta - \zeta)\phi_1 \cdot \phi_0 + \zeta|\phi_1|^2 \\ 
                &= 2\zeta - 1 - (2\zeta - 1)\phi_1 \cdot \phi_0 \\ 
                &= (2\zeta - 1)(1 - \phi_1 \cdot \phi_0) \\ 
                &= \left(\zeta - \frac{1}{2}\right)|\phi_1 - \phi_0|^2.
            \end{aligned}
        \end{equation*}
        Thus, 
        \begin{equation*}
            \partial_{\zeta}\Phi_{\zeta} = \frac{\phi_1 - \phi_0}{(1 - \zeta(1 - \zeta)|\phi_1 - \phi_0|^2)^{1/2}} - \frac{(\zeta - 1/2)|\phi_1 - \phi_0|^2}{(1 - \zeta(1 - \zeta)|\phi_1 - \phi_0|^2)^{3/2}}\phi_{\zeta} =: \Phi_1 + \Phi_2.
        \end{equation*}
        We set $\widetilde{\phi} := \phi_1 - \phi_0$. On the one hand, 
        \begin{equation*}
            \Phi_1 = \widetilde{\phi} + g(\widetilde{\phi})\widetilde{\phi}, \qquad g(x) = \frac{1}{(1 - \zeta(1 - \zeta)|x|^2)^{1/2}} - 1.
        \end{equation*}
        Note $g(0) = 0$. By paralinearization theorem (Theorem \ref{para}) and Sobolev embedding
        \begin{equation*}
            \Vert g(\widetilde{\phi})\Vert_{H_x^{\sigma - 1}} \lesssim \Vert \widetilde{\phi}\Vert_{H_x^{\sigma - 1}},
        \end{equation*}
        and in view of \eqref{assu}, 
        \begin{equation*}
            \Vert g(\widetilde{\phi})\Vert_{L_x^{\infty}} \lesssim \Vert \widetilde{\phi}\Vert_{L_x^{\infty}}.
        \end{equation*}
        Then from Kato-Ponce inequality \cite{A,KP}
        \begin{equation*}
            \Vert g(\widetilde{\phi})\widetilde{\phi}\Vert_{H_x^{\sigma - 1}} \lesssim \Vert g(\widetilde{\phi})\Vert_{H_x^{\sigma - 1}}\Vert \widetilde{\phi}\Vert_{L_x^{\infty}} + \Vert g(\widetilde{\phi})\Vert_{L_x^{\infty}}\Vert \widetilde{\phi}\Vert_{H_x^{\sigma - 1}} \lesssim \Vert \widetilde{\phi}\Vert_{L_x^{\infty}}\Vert \widetilde{\phi}\Vert_{H_x^{\sigma - 1}} \lesssim \Vert \widetilde{\phi}\Vert_{H_x^{\sigma - 1}},
        \end{equation*}
        where the last inequality follows from the fact $H_x^{\sigma} \hookrightarrow L_x^{\infty}$, and hence
        \begin{equation*}
            \Vert \widetilde{\phi}\Vert_{L_x^{\infty}} \leq \Vert \phi_1\Vert_{L_x^{\infty}} + \Vert \phi_0\Vert_{L_x^{\infty}} \lesssim \Vert \phi_1\Vert_{H_x^{\sigma}} + \Vert \phi_0\Vert_{H_x^{\sigma}} \leq C.
        \end{equation*}
        Therefore, 
        \begin{equation*}
            \Vert \Phi_1\Vert_{H_x^{\sigma - 1}} \lesssim \Vert \widetilde{\phi}\Vert_{H_x^{\sigma - 1}}.
        \end{equation*}
        In a similar manner we can also obtain 
        \begin{equation*}
            \Vert \Phi_2\Vert_{H_x^{\sigma - 1}} \lesssim \Vert \widetilde{\phi}\Vert_{H_x^{\sigma - 1}}.
        \end{equation*}
        The proof is concluded.
    \end{proof}
\end{proposition}

\section{Estimates for Unit Normals}\label{AA}

\begin{proposition}\label{prop1}
    Under assumptions \eqref{boot1}-\eqref{boot2} there holds 
    \begin{equation*}
        \Vert P_{\lambda}\nabla_xn_i(u)\Vert_{L_t^6L_x^2} \lesssim t^{-\frac{1}{12}}\Vert \phi\Vert_{H_x^\sigma}\lambda^{-\sigma + 1}a_{\lambda}.
    \end{equation*}
    \begin{proof}
    By Bony decomposition, 
    \begin{equation}\label{Bony1}
    \begin{aligned}
        \Vert P_{\lambda}\nabla_xn_i(u)\Vert_{L_t^6L_x^2} &= \Vert P_{\lambda}(n_i'(u)\nabla_xu)\Vert_{L_t^6L_x^2} \\
        &\lesssim \left(\sum_{\lambda_1 \ll \lambda_2 \sim \lambda} + \sum_{\lambda_2 \ll \lambda_1 \sim \lambda} + \sum_{\lambda_1 \sim \lambda_2 \gtrsim \lambda}\right)\Vert P_{\lambda_1}n_i'(u)P_{\lambda_2}\nabla_xu\Vert_{L_t^6L_x^2}.
    \end{aligned}
\end{equation}
    Here $n_i'(u) \in \mathbb{R}^{N \times N}$ and $\nabla_xu \in \mathbb{R}^{N \times 1}$.
    For the second term on the right-hand side of \eqref{Bony1}, from H\"older inequality, \eqref{boot2} and \eqref{fre1} we have
    \begin{align*}
            \sum_{\lambda_2 \ll \lambda_1 \sim \lambda}\Vert P_{\lambda_1}n_i'(u)P_{\lambda_2}\nabla_xu\Vert_{L_t^6L_x^2} &\leq \Vert P_{\lambda}n_i'(u)\Vert_{L_t^{\infty}L_x^2}\sum_{\lambda_2 \ll \lambda}\Vert P_{\lambda_2}\nabla_xu\Vert_{L_t^6L_x^{\infty}} \\ 
        &\lesssim t^{-\frac{1}{24}}\Vert P_{\lambda}n_i'(u)\Vert_{L_t^{\infty}L_x^2}\sum_{\lambda_2 \ll \lambda}\lambda_2^{-\sigma + \frac{d}{2} + \frac{5}{6}}a_{\lambda_2} \\ 
        &\lesssim t^{-\frac{1}{24}}\Vert P_{\lambda}n_i'(u)\Vert_{L_t^{\infty}L_x^2}\sum_{\lambda_2 \ll \lambda}\lambda_2^{-\sigma + \frac{d}{2} + \frac{5}{6}}\left(\frac{\lambda}{\lambda_2}\right)^{\delta}a_{\lambda}.
    \end{align*}
    Note 
    \begin{equation*}
        \Vert P_{\lambda}n_i'(u)\Vert_{L_t^{\infty}L_x^2} = \lambda^{-\sigma}\lambda^\sigma\Vert P_{\lambda}n_i'(u)\Vert_{L_t^{\infty}L_x^2} \lesssim \lambda^{-\sigma}\Vert n_i'(u)\Vert_{L_t^{\infty}H_x^\sigma}
    \end{equation*}
    It follows from Theorem \ref{para} and \eqref{boot1} that 
    \begin{align*}
    \Vert P_{\lambda}n_i'(u)\Vert_{L_t^{\infty}L_x^2} \lesssim \lambda^{-\sigma}\Vert n_i'(u)\Vert_{L_t^{\infty}H_x^\sigma} \lesssim t^{-\frac{1}{24}}\lambda^{-\sigma}\Vert \phi\Vert_{H_x^\sigma}.
\end{align*}
Since $\sigma > d/2 + 5/6$, we have 
    \begin{align*}
            \sum_{\lambda_2 \ll \lambda_1 \sim \lambda}\Vert P_{\lambda_1}n_i'(u)P_{\lambda_2}\nabla_xu\Vert_{L_t^6L_x^2} &\lesssim t^{-\frac{1}{12}}\Vert \phi\Vert_{H_x^\sigma}\lambda^{-\sigma + \delta}a_{\lambda}\sum_{\lambda_2 \ll \lambda}\lambda_2^{-\sigma + \frac{d}{2} + \frac{5}{6} - \delta}\\
    &\lesssim t^{-\frac{1}{12}}\Vert \phi\Vert_{H_x^\sigma}\lambda^{-\sigma + \delta}a_{\lambda},
\end{align*}
where $\delta = \delta(\sigma, d)$ can be chosen to satisfy $0 < \delta < 1$.  

For the first and the third term, by H\"older inequality, Theorem \ref{Nbern} and \eqref{boot1}-\eqref{boot2}
\begin{align*}                \sum_{\lambda_1 \ll \lambda_2 \sim \lambda}\Vert P_{\lambda_1}n_i'(u)P_{\lambda_2}\nabla_xu\Vert_{L_t^6L_x^2} &\leq \Vert P_{\lambda}\nabla_xu\Vert_{L_t^{\infty}L_x^2}\sum_{\lambda_1 \ll \lambda}\Vert P_{\lambda_1}n_i'(u)\Vert_{L_t^6L_x^{\infty}} \\ 
    &\lesssim t^{-\frac{1}{24}}\lambda^{-\sigma + 1}a_{\lambda}\sum_{\lambda_1 \ll \lambda}\lambda_1^{-1}\Vert \nabla_xu\Vert_{L_t^6L_x^{\infty}} \\ 
    &\lesssim t^{-\frac{1}{12}}\Vert \phi\Vert_{H_x^\sigma}\lambda^{-\sigma + 1}a_{\lambda}, \\
    \sum_{\lambda_1 \sim \lambda_2 \gtrsim \lambda}\Vert P_{\lambda_1}n_i'(u)P_{\lambda_2}\nabla_xu\Vert_{L_t^6L_x^2} &\lesssim \sum_{\lambda_1 \gtrsim \lambda}\Vert P_{\lambda_1}n_i'(u)\Vert_{L_t^6L_x^{\infty}}\Vert P_{\lambda_1}\nabla_xu\Vert_{L_t^{\infty}L_x^2} \\
    &\lesssim t^{-\frac{1}{24}}\sum_{\lambda_1 \gtrsim \lambda}\lambda_1^{-1}\Vert \nabla_xu\Vert_{L_t^6L_x^{\infty}}\lambda_1^{-\sigma + 1}a_{\lambda_1} \\ 
    &\lesssim t^{-\frac{1}{12}}\Vert \phi\Vert_{H_x^\sigma}\lambda^{-\sigma + 1}a_{\lambda}\sum_{\lambda_1 \gtrsim \lambda}\lambda_1^{-1}\left(\frac{\lambda_1}{\lambda}\right)^{-\sigma + 1 + \delta} \\ 
    &\lesssim t^{-\frac{1}{12}}\Vert \phi\Vert_{H_x^\sigma}\lambda^{-\sigma + 1}a_{\lambda} \qquad (-\sigma + 1 + \delta < 0).
\end{align*}
    \end{proof}
\end{proposition}

\begin{proposition}\label{prop2}
    Under assumptions \eqref{boot1}-\eqref{boot2} there holds 
    \begin{equation*}
        \sum_{\mu \ll \lambda}\Vert P_{\mu}\nabla_xn_i(u)\Vert_{L_t^6L_x^{\infty}} \lesssim t^{-\frac{1}{12}}\Vert \phi\Vert_{H_x^\sigma}^2.
    \end{equation*}
    \begin{proof}
    Similar to the proof for Proposition \ref{prop1}, we first write 
        \begin{align*}
    \sum_{\mu \ll \lambda}\Vert P_{\mu}\nabla_xn_i(u)\Vert_{L_t^6L_x^{\infty}} \lesssim \sum_{\mu \ll \lambda}\left(\sum_{\mu_1 \ll \mu_2 \sim \mu} + \sum_{\mu_2 \ll \mu_1 \sim \mu} + \sum_{\mu_1 \sim \mu_2 \gtrsim \mu}\right)\Vert P_{\mu_1}n_i'(u)P_{\mu_2}\nabla_xu\Vert_{L_t^6L_x^{\infty}}.
\end{align*}
Note $\sigma > d/2 + 5/6$. It follows that 
\begin{align*}
                            \sum_{\mu \ll \lambda}\sum_{\mu_1 \ll \mu_2 \sim \mu}\Vert P_{\mu_1}n_i'(u)P_{\mu_2}\nabla_xu\Vert_{L_t^6L_x^{\infty}} &\lesssim \sum_{\mu \ll \lambda}\Vert P_{\mu}\nabla_xu\Vert_{L_t^6L_x^{\infty}}\sum_{\mu_1 \ll \mu}\Vert P_{\mu_1}n_i'(u)\Vert_{L_t^{\infty}L_x^{\infty}} \\ 
    &\lesssim \sum_{\mu \ll \lambda}\Vert P_{\mu}\nabla_xu\Vert_{L_t^6L_x^{\infty}}\sum_{\mu_1 \ll \mu}\mu_1^{\frac{d}{2}}\Vert P_{\mu_1}n_i'(u)\Vert_{L_t^{\infty}L_x^2} \\ 
    &= \sum_{\mu \ll \lambda}\Vert P_{\mu}\nabla_xu\Vert_{L_t^6L_x^{\infty}}\sum_{\mu_1 \ll \mu}\mu_1^{\frac{d}{2} - \sigma}\mu_1^\sigma\Vert P_{\mu_1}n_i'(u)\Vert_{L_t^{\infty}L_x^2} \\ 
    &\lesssim \Vert n_i'(u)\Vert_{L_t^{\infty}H_x^\sigma}\sum_{\mu \ll \lambda}\Vert P_{\mu}\nabla_xu\Vert_{L_t^6L_x^{\infty}} \\ 
    &\lesssim t^{-\frac{1}{24}}\Vert \phi\Vert_{H_x^\sigma}\sum_{\mu \ll \lambda}\mu^{-\sigma + \frac{d}{2} + \frac{5}{6}}a_{\mu} \\ 
    &\lesssim t^{-\frac{1}{12}}\Vert \phi\Vert_{H_x^\sigma}^2, \\ 
    \sum_{\mu \ll \lambda}\sum_{\mu_2 \ll \mu_1 \sim \mu}\Vert P_{\mu_1}n_i'(u)P_{\mu_2}\nabla_xu\Vert_{L_t^6L_x^{\infty}} &\lesssim \sum_{\mu \ll \lambda}\Vert P_{\mu}n_i'(u)\Vert_{L_t^6L_x^{\infty}}\sum_{\mu_2 \ll \mu} \Vert P_{\mu_2}\nabla_xu\Vert_{L_t^{\infty}L_x^{\infty}} \\
    &\lesssim \sum_{\mu \ll \lambda}\Vert P_{\mu}n_i'(u)\Vert_{L_t^6L_x^{\infty}}\sum_{\mu_2 \ll \mu}\mu_2^{\frac{d}{2}} \Vert P_{\mu_2}\nabla_xu\Vert_{L_t^{\infty}L_x^2} \\
    &\lesssim t^{-\frac{1}{24}}\sum_{\mu \ll \lambda}\Vert P_{\mu}n_i'(u)\Vert_{L_t^6L_x^{\infty}}\sum_{\mu_2 \ll \mu}\mu_2^{\frac{d}{2}}\mu_2^{-\sigma + 1}a_{\mu_2} \\
    &\lesssim t^{-\frac{1}{24}}\sum_{\mu \ll \lambda}\mu^{\frac{1}{6}}\Vert P_{\mu}n_i'(u)\Vert_{L_t^6L_x^{\infty}}\sum_{\mu_2 \ll \mu}\mu_2^{-\sigma + \frac{d}{2} + \frac{5}{6}}a_{\mu_2} \\ 
    &\lesssim t^{-\frac{1}{24}}\Vert \phi\Vert_{H_x^\sigma}\sum_{\mu \ll \lambda}\mu^{\frac{1}{6}}\Vert P_{\mu}n_i'(u)\Vert_{L_t^6L_x^{\infty}} \\
    &\lesssim t^{-\frac{1}{24}}\Vert \phi\Vert_{H_x^\sigma}\sum_{\mu \ll \lambda}\mu^{\frac{1}{6}}\mu^{-1}\Vert \nabla_xu\Vert_{L_t^6L_x^{\infty}} \\
    &\lesssim t^{-\frac{1}{12}}\Vert \phi\Vert_{H_x^\sigma}^2 
\end{align*}
and
\begin{equation*}
    \begin{aligned}
\sum_{\mu \ll \lambda}\sum_{\mu_1 \sim \mu_2 \gtrsim \mu}\Vert P_{\mu_1}n_i'(u)P_{\mu_2}\nabla_xu\Vert_{L_t^6L_x^{\infty}} &\lesssim \sum_{\mu \ll \lambda}\sum_{\mu_1 \gtrsim \mu}\Vert P_{\mu_1}n_i'(u)\Vert_{L_t^6L_x^{\infty}}\Vert P_{\mu_1}\nabla_xu\Vert_{L_t^{\infty}L_x^{\infty}} \\
    &\lesssim \sum_{\mu \ll \lambda}\sum_{\mu_1 \gtrsim \mu}\mu_1^{\frac{d}{2}}\Vert P_{\mu_1}n_i'(u)\Vert_{L_t^6L_x^{\infty}}\Vert P_{\mu_1}\nabla_xu\Vert_{L_t^{\infty}L_x^2} \\ 
    &\lesssim t^{-\frac{1}{24}}\sum_{\mu \ll \lambda}\sum_{\mu_1 \gtrsim \mu}\mu_1^{\frac{d}{2}}\mu_1^{-1}\Vert \nabla_xu\Vert_{L_t^6L_x^{\infty}}\mu_1^{-\sigma + 1}a_{\mu_1} \\ 
    &\lesssim t^{-\frac{1}{12}}\Vert \phi\Vert_{H_x^\sigma}\sum_{\mu \ll \lambda}\mu^{-\frac{5}{6}}\sum_{\mu_1 \gtrsim \mu}\mu_1^{-\sigma + \frac{d}{2} + \frac{5}{6}}a_{\mu_1} \\
    &\lesssim t^{-\frac{1}{12}}\Vert \phi\Vert_{H_x^\sigma}^2.
    \end{aligned}
\end{equation*}
This completes the proof.
    \end{proof}
\end{proposition}

\vspace{3mm}

\noindent \textbf{Acknowledgements.}
L. Tu would like to express his gratitude to Dr. Sheng Wang for his continuous and insightful discussions during the early stages of this work, and Dr. Zhixiu Fan, Jie Shao and Zexian Zhang for their valuable comments.
The authors would also like to thank Prof. Naqing Xie for discussions on differential geometry.
This work was supported by the National Natural Science Foundation of China [No. 12571231, No. 12171097], the Key Laboratory of Mathematics for Nonlinear Sciences (Fudan University), the Ministry of Education of China and Shanghai Key Laboratory for Contemporary Applied Mathematics.

\vspace{3mm}

\noindent\textbf{Data availability.} The manuscript has no associated data.

\vspace{6mm}

\noindent\textbf{\Large Declaration}

\vspace{3mm}

\noindent\textbf{Conflict of interest.} The authors state that there is no conflict of interest.

\bibliographystyle{abbrv}
\bibliography{refs}

\end{document}